\title{An Equivariant Theory for the Bivariant Cuntz Semigroup}
\author{Gabriele N. Tornetta}
\address{School of Mathematics and Statistics, University of Glasgow, 15 University Gardens, G12 8QW, Glasgow, UK}
\email{g.tornetta.1@research.gla.ac.uk}
\thanks{\emph{Supported by:}  EPSRC Grant EP/I019227/2}
\subjclass[2010]{Primary 46L10, 46L35; Secondary 06F05, 19K14, 46L30, 46L80}
\date{\today}
\newcommand{\A}{}
\newcommand{\Cs}{C$^*$}
\newcommand{\M}{{\mathcal M}}
\newcommand{\bb}[1]{\mathbb{#1}}
\newcommand{\IN}{\bb N}
\newcommand{\IR}{\bb R}
\newcommand{\CC}{\bb C}
\newcommand{\Ad}{\operatorname{Ad}}
\newcommand{\dd}{^{**}}
\newcommand{\de}{{\operatorname d}}
\newcommand{\id}{\operatorname{id}}
\newcommand{\hoplus}{\operatorname{{\hat{\oplus}}}}
\newcommand{\SOT}{\text{\normalfont\scshape sot}}
\newcommand{\supp}{\operatorname{supp}}
\newcommand{\Mf}{\operatorname{Mf}}
\newcommand{\cc}{\subset\!\!\!\subset}
\newcommand{\gpze}{\sim_{G,\text{PZ}}}
\newcommand{\cat}[1]{\text{\normalfont\sffamily #1}}
\newcommand{\mor}[1]{\operatorname{#1}}
\newcommand{\Aut}{\mor{Aut}}
\newcommand{\Cu}{\cat{Cu}}
\newcommand{\seq}[1]{\left\{{#1}_n\right\}_{n\in\IN}}
\newcommand{\norm}[1]{\left\Vert{#1}\right\Vert}
\newcommand{\eps}{\epsilon}
\newcommand{\ess}{\text{\upshape ess}}
\theoremstyle{plain}
\newtheorem{lemma}{Lemma}[section]
\newtheorem{theorem}[lemma]{Theorem}
\newtheorem{corollary}[lemma]{Corollary}
\newtheorem{proposition}[lemma]{Proposition}
\newtheorem*{proposition*}{Proposition}
\newtheorem*{theorem*}{Theorem}
\newtheorem*{definition*}{Definition}
\newtheorem*{claim*}{Claim}
\theoremstyle{definition}
\newtheorem{definition}[lemma]{Definition}
\newtheorem{example}[lemma]{Example}
\theoremstyle{remark}
\newtheorem{remark}[lemma]{Remark}
\begin{document}

	\maketitle
	\begin{abstract}
		We provide an equivariant extension of the bivariant Cuntz semigroup introduced in previous work for the case of compact group actions over \Cs-algebras. Its functoriality properties are explored and some well-known classification results are retrieved. Connections with crossed products are investigated, and a concrete presentation of equivariant Cuntz homology is provided. The theory that is here developed can be used to define the equivariant Cuntz semigroup. We show that the object thus obtained coincides with the recently proposed one by Gardella and Santiago, and we complement their work by providing an open projection picture of it.

	\end{abstract}


	\section{Introduction}

During the last couple of decades, the Cuntz semigroup, an invariant for C*-algebras originally proposed by Cuntz \cite{cuntz78} in the late seventies of the previous century, has gained a prominent r\^ole in the Classification Programme of \Cs-algebras initiated by George Elliott. Important contributions came from the results of R\o rdam \cite{rordam03} and Toms \cite{toms}, which brought to light the fact that there are non-isomorphic \Cs-algebras with the same Elliott invariant and that can be told apart by other invariants, like the real rank and their Cuntz semigroup data.

The investigation into the theory of completely positive maps with the order zero property (from now on, c.p.~order zero maps for short) undertaken by Winter and Zacharias \cite{wz}, building on top of previous work on orthogonality preserving maps by Wolff \cite{wolff}, has led to the discovery of deep connections between this special class of maps between \Cs-algebras and the Cuntz semigroup. In particular, they have shown that every such map induces a semigroup homomorphism between the associated Cuntz semigroups. This has opened the doors for a bivariant extension of the theory of the Cuntz semigroup, that has been established in \cite{bcs}. The main idea behind the just cited work is to provide a new framework which is, in spirit, similar to Kasparov's bivariant K-theory, i.e. KK-theory, but that it is reminiscent of the idea of Cuntz comparison of positive structures (in this case c.p.c.~order zero maps). The classification result for the class of unital and stably finite \Cs-algebras, obtained by the author in his Ph.D.~work, and contributed to \cite{bcs}, can be regarded as an analogue of the Kirchberg-Phillips classification of purely infinite \Cs-algebras by KK-theory. Thus, the bivariant Cuntz semigroup complements the classification of purely infinite \Cs-algebras in the sense that it gives a suitable Cuntz-analogue of the KK-theoretic Kirchberg-Phillips classification result for stably finite \Cs-algebras.

The main subject of the present paper, which also stems from part of the Ph.D.~work of the author, is the introduction of an equivariant extension of the bivariant Cuntz semigroup, with the final aim to provide a novel tool for the problem of classification of \Cs-dynamical systems. The theory proposed in this paper is specifically designed for \emph{compact} group actions on \Cs-algebras. With such a tool at one's disposal, the definition of an equivariant extension of the ordinary Cuntz semigroup appears as a natural consequence, since it has been shown in \cite{bcs} that the ordinary Cuntz semigroup can be recovered from the bivariant theory there defined. A notion of equivariant Cuntz semigroup, however, has recently appeared in the work of Gardella and Santiago \cite{gs}, where it is also used to provide a classification result of certain actions on certain \Cs-algebras. Indeed, we shall see that the equivariant theory developed in this paper overlaps with the new definitions that have appeared in \cite{gs}, and that the classification result just mentioned can also be \emph{recovered} within the framework of the present work. 


\subsection*{Outline}

The present paper is organised as follows. In Section \ref{sec:prelim} we recall the definition of the bivariant Cuntz semigroup as presented in \cite{bcs} together with the main classification result for unital and stably finite \Cs-algebras, and also set the notation that will be used throughout. We also provide the basics of equivariant K-theory and KK-theory for the reader's convenience, and references to more self-contained sources are also provided.

In Section \ref{sec:equibcs} we introduce the equivariant theory of the bivariant Cuntz semigroup and we show that most of the properties of the ordinary bivariant Cuntz semigroup of \cite{bcs} carry over to the equivariant setting. Here we restrict our attention to \emph{separable} \Cs-dynamical systems, i.e. those for which the underlying \Cs-algebra is separable and the group acting on it is second countable. We also investigate the relations with crossed products in order to strengthen the analogy with equivariant KK-theory. In particular, we show that a Green-Julg-type theorem holds for the equivariant theory of the bivariant Cuntz semigroup developed in this paper (Theorem \ref{th:jG}). This section is concluded with an equivariant extension of the \emph{Cuntz homology} for compact Hausdorff spaces introduced in \cite{bcs}.

Section \ref{sec:equics} is dedicated to the \emph{equivariant Cuntz semigroup}. This new object emerges as the special case $\Cu^G(\CC, A)$, in complete analogy with the way one can recover the ordinary Cuntz semigroup from the bivariant theory defined in \cite{bcs}. We show that the object thus defined coincides with that introduced in \cite{gs}, and we complement the latter by providing an equivariant extension of the open projection picture of the Cuntz semigroup investigated in \cite{ORT}.

In Section \ref{sec:classification} we show how to use the equivariant bivariant Cuntz semigroup for the problem of classification of actions. This is done by generalising the notion of strict equivalence of \cite{bcs} to the equivariant setting and extending the core result of the paper to the equivariant version given in Theorem \ref{thm:Gsinvtoisowithscale}. In particular, we show how to use the theory developed in this paper to retrieve the classification result of Handelman and Rossmann \cite{hr} on locally representable actions by compact groups on AF algebras (Corollary \ref{cor:bhr}), and the more recent result of Gardella and Santiago of locally representable finite Abelian group actions on inductive limits of one-dimensional non-commutative CW complexes (Corollary \ref{cor:bgs}).


\subsection*{Acknowledgements}

The author wishes to thank his supervisor, Joachim Zacharias, for proposing the investigation of an equivariant theory for the bivariant Cuntz semigroup, and Eusebio Gardella for many fruitful conversations about the equivariant Cuntz semigroup he has developed together with Luis Santiago, as well as many insightful suggestions and comments on an early draft of this paper. We acknowledge the support of EPSRC, grant EP/I019227/2.

	\section{\label{sec:prelim}Preliminaries}

We use this section to collect some preliminary notions and results, and to set most of the notation employed in this paper. In what follows, we shall make use of capital letters $A,B,C,\ldots$ to denote \Cs-algebras, and the notation $A_+$ to denote the cone of the positive elements of $A$. The multiplier algebra of $A$ is denoted by $\M(A)$, while the letter $K$ is used to denote the \Cs-algebra of compact operators on an infinite-dimensional separable Hilbert space. For $k\in\IN$, we shall denote by $M_k$ the complex $k\times k$ matrix algebra, and by $M_k(A)$ the $A$-valued $k\times k$ matrix algebra. With this notation we then have $M_k = M_k(\CC)$.

The Greek letters $\pi,\omega,\ldots$ are used to denote $*$-homomorphisms between \Cs-algebras, while $\phi,\psi$ are reserved for c.p.c.~order zero maps, whose definition is recalled in this section for the reader's convenience.

The letters $E,F$ are reserved for Hilbert modules, while $V,W$ will usually denote Hilbert spaces. Unless otherwise specified, every Hilbert module is a \emph{right} Hilbert module, and every Hilbert $A$-$B$ bimodule is a right Hilbert $B$ module with a left $A$-action. The set of all bounded and adjointable operators on a Hilbert module $E$ will be denoted by $B(E)$. We refer the reader to \cite{mt} and \cite{lance} for an overview of Hilbert (bi)modules.

In describing equivariant theories, we shall reserve the capital letter $G$ for denoting a topological group, usually assumed to be compact and second countable, unless otherwise specified.


\subsection{The Bivariant Cuntz Semigroup}

We recall the main definitions and results of \cite{bcs}, where a bivariant extension of the ordinary theory of the Cuntz semigroup has been proposed. The key ingredient is a notion of comparison between completely positive maps with the order zero property. Such maps were originally introduced in \cite{wintercd}, and subsequently thoroughly investigated in \cite{wz}.

\begin{definition} Let $A$ and $B$ be \Cs-algebras. A completely positive map $\phi:A\to B$ has the order zero property if $\phi(a)\phi(b) = 0$ whenever $ab = 0$, with $a,b\in A_+$.
\end{definition}

The structure of completely positive maps with the order zero property has been established in \cite{wz}, where the authors have built on top of previous work by Wolff \cite{wolff} on orthogonality preserving maps.

\begin{theorem}[Winter-Zacharias]\label{thm:wz} Let $A$ and $B$ be \Cs-algebras and let $\phi:A\to B$ be a completely positive map with the order zero property. Denote by $C_\phi$ the \Cs-algebra generated by the image of $\phi$, that is $C_\phi:= C^*(\phi(A))$. There are a positive element $h_\phi\in\M(C_\phi)_+\cap C_\phi'$ and a non-degenerate $*$-homomorphism $\pi_\phi : A\to\M(C_\phi)\cap\{h_\phi\}'$ such that 
$\norm\phi = \norm{h_\phi}$ and $\phi(a) = h_\phi\pi_\phi(a)$ for any $a\in A$.
\end{theorem}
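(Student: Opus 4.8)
The plan is to follow the route of Winter and Zacharias: pass to biduals in order to manufacture $h_\phi$ as the image of a unit, extract a multiplicative identity for $\phi$ from the order zero condition together with complete positivity, and then ``divide by'' $h_\phi$ inside a von Neumann algebra to recover $\pi_\phi$.

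\emph{Step 1 (setup and the element $h_\phi$).} First extend $\phi$ to the normal completely positive map $\phi\dd\colon A\dd\to B\dd$; a routine weak-$*$ density argument shows $\phi\dd$ is again order zero. Put $h_\phi:=\phi\dd(1_{A\dd})\in B\dd_+$; equivalently $h_\phi$ is the ultraweak limit of $\phi(e_\lambda)$ along any approximate unit $(e_\lambda)$ of $A$, and $\norm{h_\phi}=\norm{\phi\dd}=\norm\phi$ since the norm of a completely positive map is attained on an approximate unit. From $0\le x\le\norm x\,1_{A\dd}$ and positivity one gets $\phi(x)\le\norm x\,h_\phi$ for every $x\in A_+$, a fact that will be used repeatedly.

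\emph{Step 2 (the multiplication rule --- the heart of the proof).} The goal is the identity $\phi(a)\phi(b)=h_\phi\,\phi(ab)=\phi(ab)\,h_\phi$ for all $a,b\in A$; in particular $h_\phi$ commutes with $\phi(A)$, hence with $C_\phi$. By $*$-preservation and polarization it suffices to prove $\phi(s)^2=h_\phi\,\phi(s^2)$ for self-adjoint $s$, and I would obtain this by restricting $\phi$ to the commutative \Csalg{} $C^*(s)$, where the order zero condition says precisely that orthogonal positive functions go to orthogonal elements; combining this with the Kadison--Schwarz inequality $\phi(s)^2\le\norm\phi\,\phi(s^2)$ read on spectral pieces of $s$ pins $\phi(s)^2$ down to $h_\phi\,\phi(s^2)$ --- in essence Wolff's structure theory for disjointness preserving maps. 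Polarizing $s\mapsto s+t$ then yields the Jordan identity $\phi(s)\phi(t)+\phi(t)\phi(s)=h_\phi\,\phi(st+ts)$, and the choice $t=1_{A\dd}$ gives the commutation relation $[h_\phi,\phi(s)]=0$. The genuinely delicate point --- and the main obstacle --- is upgrading the symmetric Jordan identity to the asymmetric multiplication rule, which is exactly where \emph{complete} positivity rather than mere positivity is indispensable: I would run the preceding argument inside $M_n(A)$ for the completely positive order zero map $\phi\otimes\id_{M_n}$ and invoke that a completely positive Jordan homomorphism between \Csalg{}s carries no non-trivial anti-homomorphic summand, so it must in fact be multiplicative.

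\emph{Step 3 (construction of $\pi_\phi$ and conclusion).} Working in $B\dd$, let $p$ be the support projection of $h_\phi$. From the multiplication rule, $\phi(a)^*\phi(a)=h_\phi\,\phi(a^*a)\le\norm a^2\,h_\phi^2$, so $h_\phi^{-1}\phi(a)$ --- computed with the (generally unbounded) inverse of $h_\phi$ on its support corner $pB\dd p$ --- is a well-defined element of norm at most $\norm a$; set $\pi_\phi(a):=h_\phi^{-1}\phi(a)$. The multiplication rule and the commutation $[h_\phi,\phi(a)]=0$ give at once that $\pi_\phi$ is linear, $*$-preserving and multiplicative (for instance $\pi_\phi(a)\pi_\phi(b)=h_\phi^{-2}\phi(a)\phi(b)=h_\phi^{-1}\phi(ab)=\pi_\phi(ab)$), that $h_\phi\pi_\phi(a)=\pi_\phi(a)h_\phi=\phi(a)$ --- so $\phi=h_\phi\pi_\phi(\,\cdot\,)$ and $\pi_\phi(A)\subseteq\{h_\phi\}'$ --- and that $\pi_\phi(a)\phi(b)=\phi(ab)\in C_\phi$ and $\phi(b)\pi_\phi(a)=\phi(ba)\in C_\phi$. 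Since finite products of elements of $\phi(A)$ span a dense subalgebra of $C_\phi$ and $\norm{\pi_\phi(a)}\le\norm a$, the last observation shows $\pi_\phi(a)\in\M(C_\phi)$. Finally, the multiplication rule shows that every generator $\phi(a)$ of $C_\phi$, and every product of such generators, lies in the closed two-sided ideal generated by $h_\phi$, so $h_\phi$ is full in $C_\phi$; hence $h_\phi^{1/n}$ is an approximate unit for $C_\phi$, $h_\phi\in\M(C_\phi)_+\cap C_\phi'$, and $\pi_\phi$ is non-degenerate (its canonical extension sends $1_{A\dd}$ to the support projection of $h_\phi$, which coincides with that of $C_\phi$). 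Together with $\norm{h_\phi}=\norm\phi$ from Step 1, this establishes all the assertions.
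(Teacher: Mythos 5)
The paper does not actually prove this statement: it is recalled from Winter--Zacharias (the reference \cite{wz}) and used as a black box (its only ``proof'' here is the citation), so the relevant comparison is with the argument in that source --- and your outline is, in substance, a faithful reconstruction of it: pass to $\phi\dd$, set $h_\phi=\phi\dd(1)$, derive the multiplication rule $\phi(a)\phi(b)=h_\phi\phi(ab)$ from Wolff's structure theory for disjointness-preserving maps, use matrix amplification and complete positivity to exclude the anti-homomorphic summand of the resulting Jordan map, and then divide by $h_\phi$ on its support corner to produce $\pi_\phi$. The one sub-step whose stated justification is not valid is the inference ``$h_\phi$ is full in $C_\phi$, hence $h_\phi^{1/n}$ is an approximate unit for $C_\phi$'': fullness does not imply strict positivity (a rank-one projection is full in the simple algebra $K$, yet its powers are constant and are not an approximate unit). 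The conclusion you want is nevertheless immediate from what you have already established at that point, since $h_\phi^{1/n}\phi(a)=h_\phi^{1/n}h_\phi\pi_\phi(a)\to h_\phi\pi_\phi(a)=\phi(a)$ in norm (because $t^{1/n}t\to t$ uniformly on $[0,\norm{h_\phi}]$ and $\pi_\phi(a)$ is a bounded multiplier); this shows directly that $h_\phi$ is strictly positive in $C_\phi$ and that $\pi_\phi$ is non-degenerate, so the slip is one of attribution rather than a genuine gap.
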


\begin{definition} Let $A$ and $B$ be \Cs-algebras, and let $\phi,\psi:A\to B$ be c.p.c.~order zero maps. We say that $\phi$ is (Cuntz-)subequivalent to $\psi$, in symbols $\phi\precsim\psi$, if there exists a sequence $\seq b\subset B$ such that
	$$\lim_{n\to\infty}\norm{b_n\psi(a)b_n^*- \phi(a)} \to 0,$$
for all $a\in A$.
\end{definition}

Two c.p.c.~order zero maps $\phi,\psi:A\to B$ are said to be (Cuntz-)equivalent, in symbols $\phi\sim\psi$, if both $\phi\precsim\psi$ and $\psi\precsim\phi$ hold.

For any \Cs-algebra $A$, we denote by $\Delta_A$, or simply by $\Delta$ when no confusion arises, the \emph{diagonal map} on $A$ with values in $A\oplus A$, that is
	$$\Delta(a) := (a, a)$$
for any $a\in A$. Hence, for the direct sum of two c.p.c.~order zero maps $\phi,\psi:A\to B$ we define the binary operation $\hoplus$ as
	$$\phi\hoplus\psi := (\phi\oplus\psi)\circ\Delta.$$
This yields a c.p.c.~order zero map from $A$ to $B\oplus B\subset M_2(B)$, as one can easily verify from the above definition.

\begin{definition}\label{def:bcs} Let $A$ and $B$ be \Cs-algebras. The bivariant Cuntz semigroup $\Cu(A,B)$ is the set of Cuntz-equivalence classes of c.p.c.~order zero maps from $A\otimes K$ to $B\otimes K$, equipped with the binary operation $+:\Cu(A,B)\times\Cu(A,B)\to\Cu(A,B)$ given by
	$$[\phi]+[\psi] := [\phi\hoplus\psi].$$
\end{definition}

It is shown in \cite{bcs} that the above definition yields a monoid, which can also be equipped with a partial order relation $\leq$ to become a partially ordered Abelian monoid.

The major target behind the idea of a bivariant theory for the Cuntz semigroup developed in \cite{bcs} was that of providing a new tool for the classification problem for \Cs-algebras. In this respect, the theory there developed has been proven to be successful, since it provides a tool that is capable of classifying all unital and stably finite \Cs-algebras in a way that formally resembles the Kirchberg-Phillips classification result for the purely infinite case by means of bivariant K-theory, i.e. KK-theory. The fundamental notion is that of strict invertibility, which captures the \emph{right} scale condition that allows lifting a class from $\Cu(A,B)$ to a $*$-isomorphism between $A$ and $B$.

\begin{definition} Let $A$ and $B$ be \Cs-algebras. An element $\Phi\in\Cu(A,B)$ is said to be strictly invertible if there exist c.p.c.~order zero maps $\phi:A\to B$ and $\psi:B\to A$ such that $\Phi = [\phi\otimes\id_K]$, $\psi\circ\phi\sim\id_A$ and $\phi\circ\psi\sim\id_B$.
\end{definition}

It is clear that every $*$-isomorphism $\pi:A\to B$ between two \Cs-algebras induces a strictly invertible element in $\Cu(A,B)$, which is given by the class of the map $\pi\otimes\id_K$. With the above definition, the main classification result of \cite{bcs} can be stated in the following terms.

\begin{theorem} Let $A$ and $B$ be unital and stably finite \Cs-algebras. There is a strictly invertible element in $\Cu(A,B)$ if and only if $A$ and $B$ are $*$-isomorphic.
\end{theorem}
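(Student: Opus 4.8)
The forward implication is immediate from the observation already made in the text: a $*$-isomorphism $\pi\colon A\to B$ gives the strictly invertible element $[\pi\otimes\id_K]$, witnessed by $\pi$ and $\pi^{-1}$. So I concentrate on the converse. The plan is to upgrade the order zero data witnessing strict invertibility to honest unital $*$-homomorphisms and then run Elliott's intertwining argument.

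Let $\Phi=[\phi\otimes\id_K]$ be strictly invertible, with $\phi\colon A\to B$ and $\psi\colon B\to A$ c.p.c.\ order zero, $\psi\circ\phi\sim\id_A$ and $\phi\circ\psi\sim\id_B$. The first --- and main --- step is to show that $\phi(1_A)$ is invertible in $B$ and $\psi(1_B)$ is invertible in $A$. From $\id_A\precsim\psi\circ\phi$ there is a single sequence $\{b_n\}\subset A$ with $b_n(\psi\phi(a))b_n^*\to a$ for every $a\in A$; evaluating at $a=1_A$ gives $b_n\,\psi\phi(1_A)\,b_n^*\to 1_A$, so $c_n:=\psi\phi(1_A)^{1/2}b_n^*$ satisfies $c_n^*c_n\to 1_A$, hence $c_n$ is eventually left invertible. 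Since $A$ is unital and stably finite it is finite, so a left-invertible element is invertible; thus $\psi\phi(1_A)^{1/2}$, and therefore $\psi\phi(1_A)=\psi(\phi(1_A))$, is invertible in $A$. Now $\psi\phi(1_A)\in C_\psi$ is invertible in $A$, which forces $C_\psi$ to be unital with unit $1_A$ (its functional calculus produces $(\psi\phi(1_A))^{1/m}\to 1_A$ in norm); consequently the Winter--Zacharias homomorphism $\pi_\psi\colon B\to\M(C_\psi)=C_\psi\subseteq A$ is unital, and writing $\psi(\phi(1_A))=h_\psi\,\pi_\psi(\phi(1_A))$ as a product of two commuting positive elements of $C_\psi$ whose product is invertible, we get that $h_\psi=\psi(1_B)$ is invertible in $A$. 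The symmetric argument applied to $\phi\circ\psi\sim\id_B$ shows $C_\phi$ is unital and $h_\phi=\phi(1_A)$ is invertible in $B$.

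With $h_\phi$ and $h_\psi$ invertible, set $\tilde\phi:=h_\phi^{-1}\phi(\,\cdot\,)=\pi_\phi\colon A\to B$ and $\tilde\psi:=h_\psi^{-1}\psi(\,\cdot\,)=\pi_\psi\colon B\to A$; by Theorem \ref{thm:wz} these are unital $*$-homomorphisms. Factoring the composite, using that $\pi_\psi$ is multiplicative and $h_\phi$ is central in $C_\phi$, gives $\psi\circ\phi=\kappa\cdot(\tilde\psi\circ\tilde\phi)$, where $\kappa:=h_\psi\,\pi_\psi(h_\phi)=\psi\phi(1_A)$ is a positive invertible element of $A$ commuting with the range of $\tilde\psi\circ\tilde\phi$; conjugating by $\kappa^{\pm1/2}$ shows $\tilde\psi\circ\tilde\phi\sim\psi\circ\phi\sim\id_A$, and likewise $\tilde\phi\circ\tilde\psi\sim\id_B$. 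Since $\tilde\psi\circ\tilde\phi$ is a unital $*$-homomorphism, $\id_A\precsim\tilde\psi\circ\tilde\phi$ gives $\{b_n\}\subset A$ with $b_n(\tilde\psi\tilde\phi)(a)b_n^*\to a$ and $b_nb_n^*\to 1_A$; replacing $b_n$ by the unitary from its polar decomposition (using finiteness again) yields unitaries $u_n\in A$ with $u_n(\tilde\psi\tilde\phi)(a)u_n^*\to a$ for all $a$, i.e.\ $\tilde\psi\circ\tilde\phi\au\id_A$, and symmetrically $\tilde\phi\circ\tilde\psi\au\id_B$.

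We have thus produced unital $*$-homomorphisms $\tilde\phi\colon A\to B$, $\tilde\psi\colon B\to A$ whose two composites are approximately unitarily equivalent to the respective identity maps; Elliott's two-sided approximate intertwining argument then delivers a $*$-isomorphism $A\cong B$ (approximately unitarily equivalent to $\tilde\phi$). The hard part is genuinely the invertibility step of the second paragraph: passing from the purely asymptotic content of $\psi\circ\phi\sim\id_A$ to an honest invertible element is exactly where stable finiteness enters, and it must be combined with the observation that an element of $C_\psi$ invertible in $A$ forces $C_\psi$ to be unital --- without this one cannot legitimately replace the multiplier-valued data $\pi_\phi,\pi_\psi$ of Theorem \ref{thm:wz} by $*$-homomorphisms into $B$ and $A$. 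Once past this, the remainder is a routine adaptation of standard intertwining technology (and is where separability of $A$ and $B$ is invoked, via exhaustion by increasing finite subsets).
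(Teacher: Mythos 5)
Your proof is correct and follows essentially the same route as the source of this theorem: the decisive step --- using the Winter--Zacharias decomposition $\phi=h_\phi\pi_\phi$, $\psi=h_\psi\pi_\psi$ together with stable finiteness to force $h_\phi$ and $h_\psi$ to be invertible, hence $\pi_\phi,\pi_\psi$ to be unital $*$-homomorphisms whose composites remain Cuntz equivalent to the identities --- is precisely the argument this paper reproduces in equivariant form in Proposition \ref{prop:cpctohom}, and the finish via upgrading the witnessing sequences to unitaries and applying Elliott's two-sided approximate intertwining is the intended one. The only caveat is that the intertwining step requires $A$ and $B$ to be separable, a hypothesis suppressed in the statement as quoted here but present as a standing assumption where the result is proved.
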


An analogue of Kasparov's intersection product can be defined for the bivariant Cuntz semigroup as well, as discussed in \cite{bcs}. There it is defined simply as \emph{composition} of c.p.c.~order zero map. Specifically, for $[\phi]\in\Cu(A,B)$ and $[\psi]\in\Cu(B,C)$, one defines the product
	$$[\phi]\cdot[\psi]$$
to be the element of $\Cu(A,B)$ given by $[\psi\circ\phi]$. This way, for every \Cs-algebra $A$, the semigroup $\Cu(A,A)$ comes equipped with a natural structure of a semiring, with unit given by $\iota_A := [\id_{A\otimes K}]$. One can then define \emph{invertible elements} as those classes $\Phi\in\Cu(A,B)$ for which there is $\Psi\in\Cu(B,A)$ such that $\Phi\cdot\Psi = \iota_A$ and $\Psi\cdot\Phi = \iota_B$. By defining the \emph{scale} of the bivariant Cuntz semigroup as
	$$\Sigma(\Cu(A,B)):=\{[\phi\otimes\id_K]\in\Cu(A,B)\ |\ \phi:A\to B\text{ is c.p.c.~order zero}\}$$
one can easily verify that the strictly invertible elements of $\Cu(A,B)$ are precisely the invertible elements in the scale that have inverse in $\Sigma(\Cu(B,A))$. Furthermore, if $A$ and $B$ are unital \Cs-algebras, and $\Phi\in\Cu(A,B)$ is strictly invertible, then
	$$\Phi\cdot[1_A] = [1_B],$$
where $[1_A]$ and $[1_B]$ denote the classes of the units of $A$ and $B$ respectively in $\Cu(A)\cong\Cu(\CC,A)$ and $\Cu(B)\cong\Cu(\CC,B)$.


\subsection{\label{ssec:equiK}Equivariant K-theory}

In this section we give a brief account of equivariant K-theory, in a form that is suited to a bivariant extension according to the definition of the bivariant Cuntz semigroup given above. The main reference for this section is \cite[\S V.11]{blackadarK}, which in turn is largely based on \cite{phillips}. Firstly we introduce some standard terminology.

\begin{definition}[$G$-algebra] A $G$-algebra is a triple $(A,G,\alpha)$
consisting of a \Cs-algebra $A$, a topological group $G$ and a continuous action
$\alpha:G\to\Aut(A)$ of $G$ on $A$, i.e.~a point-norm continuous group
homomorphism.
\end{definition}

$G$-algebras are also known in the literature as \emph{\Cs-dynamical systems} or
\emph{\Cs-covariant systems}. Throughout this paper we let $G$ denote a
\emph{compact} topological group, unless otherwise stated. Hence, all the $G$-algebras
we consider are \Cs-algebras with a compact group action. Furthermore, we shall assume that any $G$-algebra is separable, i.e. the underlying algebra admits a dense countable $*$-subalgebra, and the group is second countable. When the action and
the group are clear from the context, or their specification is not necessary,
we denote a $G$-algebra $(A,G,\alpha)$ simply by referring to the underlying
\Cs-algebra $A$. Furthermore, we shall assume that all the $G$-algebras of this section are unital. A homomorphism, or an \emph{equivariant} $*$-homomorphism, between two $G$-algebras $(A,G,\alpha)$ and
$(B,G,\beta)$ is a $*$-homomorphism $\phi:A\to B$ that is assumed to intertwine
the actions, i.e.
	$$\phi(\alpha_g(a)) = \beta_g(\phi(a)),\qquad\forall a\in A,g\in G,$$
or, equivalently,
	$$\phi\circ\alpha_g = \beta_g\circ\phi,\qquad\forall g\in G.$$
As for the case of the $K_0$-group of a \Cs-algebra $A$, there are many ways of giving a
concrete realisation of the equivariant $K_0$-group $K_0^G(A)$ of $A$. The pictures we are
interested in are those based on idempotents and finitely generated projective
modules, together with their respective equivariant generalisations.

\begin{definition} Let $(A,G,\alpha)$ be a $G$-algebra. A finitely generated
projective $(A,G,\alpha)$-module is a pair $(E,\lambda)$ consisting of a
finitely generated projective $A$-module $E$ and a strongly continuous group
homomorphism $\lambda$ from $G$ to the invertible elements of $B(E)$ with
coefficient map $\alpha$, that is
	$$\lambda_g(ea) = \lambda_g(e)\alpha_g(a),\qquad\forall a\in A,g\in G.$$
\end{definition}

Let $(A,G,\alpha)$ be a $G$-algebra, $\pi$ be a finite-dimensional
representation of $G$ over the vector space $V$ and consider the $A$-module
$V\otimes A$. It becomes an $(A,G,\alpha)$-module when equipped with the
diagonal action $\lambda:=\pi\otimes\alpha$, which, in turn, induces an action
of $G$ on the \Cs-algebra of bounded and adjointable operators $B(V\otimes A)$
through
	$$gT := \lambda_g\circ T\circ\lambda_g^{-1},\qquad\forall g\in G.$$
Among all the elements of $B(V\otimes A)$ one can then consider the set of
$G$-invariant projections, i.e.
	$$P(V\otimes A)^G := \{p\in B(V\otimes A)\ |\ p^*\circ p = p\ \text{and}\ Gp=\{p\}\}.$$
It is easy to verify that if $p\in P(V\otimes A)^G$ is a $G$-invariant
projection, then $p(V\otimes A)$, that is the range of $p$, is a finitely
generated projective $(A,G,\alpha)$-module. The converse is also true, namely
every finitely generated projective $(A,G,\alpha)$-module is the range of a
$G$-invariant projection $p\in B(V\otimes A)$ for some representation $\pi$ of $G$ over the
finite dimensional vector space $V$ (see \cite[Proposition 11.2.3]{blackadarK}).
Hence, $G$-invariant projections and finitely generated projective
$(A,G,\alpha)$-modules are interchangeable objects.

If $\pi$ and $\omega$ are two finite dimensional representations of $G$ over the
vector spaces $V$ and $W$ respectively, one can equip $B(V\otimes A, W\otimes
A)\cong B(V,W)\otimes A$ with the $G$-action given by
	$$gT := (\omega_g\otimes\alpha_g)\circ T\circ (\pi_g\otimes\alpha_g)^{-1}.$$
Here, as in \cite{gs}, we are identifying $B(V,W)\otimes A$ with the Banach space given by the bottom-left corner of the natural matrix representation of the elements of $B((V\oplus W)\otimes A)$.
Two $G$-invariant projections $p\in P(V\otimes A)^G$ and $q\in P(W\otimes A)^G$
are Murray-von Neumann equivalent (in symbols $p \simeq_G q$) if there exists
a $G$-invariant element $v\in B(V\otimes A,W\otimes A)^G$, i.e. $gv=v$ for any $g\in G$, such that $p=v^*\circ
v$ and $q = v\circ v^*$. Similarly to the non-equivariant case, Murray-von Neumann
subequivalence is expressed as follows. One says that $p$ is Murray-von Neumann
subequivalent to $q$ (in symbols $p\preceq_G q$) if there exists $v\in B(V\otimes
A, W\otimes A)^G$ such that $p = v^*\circ
v$ and $v\circ v^*\leq q$. The modules $p(V\otimes A)$ and $q(V\otimes A)$ are
then isomorphic as $(G,A,\alpha)$-modules if and only if $p$ and $q$ are
Murray-von Neumann equivalent.

\begin{definition}\label{def:equimvn} The equivariant Murray-von Neumann
semigroup $V^G(A)$ of a unital $G$-algebra $(A,G,\alpha)$ is the set of
isomorphism classes of finitely generated projective $(A,G,\alpha)$ modules
equipped with the operation $+$ derived from the direct sum of modules.
\end{definition}

Equivalently, the equivariant Murray-von Neumann semigroup $V^G(A)$ can be
defined as the set of classes of Murray-von Neumann equivalent $G$-invariant
projections over all the modules of the form $V\otimes A$, where $V$ is a finite
dimensional representation vector space for $G$. The equivariant $K_0$-group of
the $G$-algebra $A$ is obtained from $V^G(A)$ through the usual Grothendieck enveloping group construction $\Gamma$, viz.
	$$K_0^G(A):=\Gamma(V^G(A)).$$
As the aim of this paper is to provide an equivariant extension of bivariant Cuntz \emph{semigroups}, we shall not consider this construction any further. Rather, we reformulate $V^G$ in a slightly different way, which is more prone to a
bivariant generalisation, by regarding each invariant projection
$p\in P(V\otimes A)^G$ as a selfadjoint idempotent from a larger module. We let $\hat G$ denote the set of unitary equivalence classes of irreducible representations of $G$. Then, by $H_G$ we denote the Hilbert space of the direct sum over $\hat G$ of the
representation vector spaces of arbitrarily selected representative from each
$\xi\in\hat G$, viz.
	$$H_G := \bigoplus_{\xi\in\hat G}V_\xi,$$
where $V_\xi$ is the representation vector space of a unitary irreducible
representation $\pi_\xi$ in the class $\xi$. The
stabilisation of the Hilbert space $H_G$, that is $H_G^{\oplus\infty}\cong
H_G\otimes\ell^2(\IN)$, is then isomorphic to $L^2(G)\otimes\ell^2(\IN)$ by Peter-Weyl's
theorem, and therefore
	$$K(H_G\otimes\ell^2(\IN)\otimes A)\cong A\otimes K(L^2(G))\otimes K,$$
up to a reordering of the tensor factors. By equipping it with the diagonal
action $\lambda\otimes\id_K\otimes\alpha$, where $\lambda:G\to B(L^2(G))$ is the
left-regular representation of $G$, $L^2(G)\otimes\ell^2(\IN)\otimes A$ becomes an $(A,G,\alpha)$-module, and the
equivariant Murray-von Neumann semigroup of $A$ can then be identified with the
Murray-von Neumann equivalence classes of $G$-invariant projections in $A\otimes
K(L^2(G))\otimes K$. That is, using $K_G$ as a shorthand notation for
$K(L^2(G))\otimes K$, we have
	\begin{equation}\label{eq:equimvn}
		V^G(A)\cong P(A\otimes K_G)^G/\simeq_G.
	\end{equation}
\begin{example} If $G$ is the trivial group $\{e\}$ and $A$ is a $G$-algebra
then $L^2(G)\cong\CC$ and therefore $K_\CC\cong K$. Hence, $V^G(A)$ is the
ordinary Murray-von Neumann semigroup of the \Cs-algebra $A$.
\end{example}

\begin{example}\label{ex:rring} Let $G\neq\{e\}$ act trivially on the
\Cs-algebra of complex numbers $\CC$. One sees that the Grothendieck
enveloping group of $V^G(\CC)$ coincides with the representation ring
$R_\CC(G)$, or simply $R(G)$, of the group $G$, i.e.
	$$K_0^G(\CC)=\Gamma(V^G(\CC))\cong R(G),$$
where $R(G)$ is the set of formal differences of equivalence classes of finite-dimensional unitary
representations of $G$, the ring structure coming from direct sums and tensor
products.
\end{example}

About the equivariant K-theory of actions we now recall the already mentioned
fundamental theorem of Julg that connects it to the ordinary K-theory of
crossed products. This result has been generalised to the equivariant theory of
the Cuntz semigroup in \cite{gs}. We also recall that we are here under the
assumption that every group $G$ we consider is compact.

\begin{theorem}[Julg]\label{thm:julgK} Let $(A,G,\alpha)$ be a $G$-algebra.
There is a natural isomorphism between $K_0^G(A)$ and $K_0(A\rtimes G)$.
\end{theorem}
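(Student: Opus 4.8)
The plan is to realise both groups as the Grothendieck enveloping group of one and the same Murray--von Neumann semigroup, that of a crossed product. By \eqref{eq:equimvn} we have $V^G(A)\cong P(A\otimes K_G)^G/\simeq_G$, and this is nothing but the ordinary Murray--von Neumann semigroup $V\bigl((A\otimes K_G)^G\bigr)$ of the fixed-point algebra: a $G$-invariant projection in $A\otimes K_G$ is precisely a projection in $(A\otimes K_G)^G$, and the relation $\simeq_G$, being witnessed by a $G$-invariant partial isometry $v\in(A\otimes K_G)^G$, is exactly Murray--von Neumann equivalence inside $(A\otimes K_G)^G$. Since $K_G = K(L^2(G))\otimes K$ and the induced $G$-action on $A\otimes K_G$ restricts to $\alpha\otimes\Ad(\lambda)$ on the factor $A\otimes K(L^2(G))$ and is trivial on the remaining $K$, everything comes down to identifying $\bigl(A\otimes K(L^2(G))\bigr)^{\alpha\otimes\Ad(\lambda)}$.

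The crux is the classical Green--Julg identification of this generalised fixed-point algebra with a crossed product. I would establish a $*$-isomorphism, natural in the $G$-algebra $A$,
$$\Theta_A\colon\bigl(A\otimes K(L^2(G))\bigr)^{\alpha\otimes\Ad(\lambda)}\ \xrightarrow{\ \cong\ }\ A\rtimes_\alpha G.$$
To construct $\Theta_A^{-1}$, represent $A$ faithfully on a Hilbert space $H$ and take the covariant pair $(\mu,w)$ of $(A,G,\alpha)$ on $L^2(G)\otimes H$, where $\mu(a)$ is multiplication by the continuous $A$-valued function $g\mapsto\alpha_g(a)$ and $w_g$ is right translation by $g$. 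A direct computation shows that $(\mu,w)$ is covariant, that both $\mu(a)$ and $w_g$ are fixed by $\alpha\otimes\Ad(\lambda)$, and that the integrated form $\mu\rtimes w$ sends $f\in C(G,A)$ to the integral operator with kernel $(h,h')\mapsto\alpha_h\bigl(f(h^{-1}h')\bigr)$, a continuous $A$-valued kernel and hence a Hilbert--Schmidt operator; thus $\mu\rtimes w$ maps $A\rtimes_\alpha G$ into $\bigl(A\otimes K(L^2(G))\bigr)^{\alpha\otimes\Ad(\lambda)}$. Injectivity of $\mu\rtimes w$ holds because $G$ is compact, hence amenable, so the full and reduced crossed products coincide and the regular representation is faithful; surjectivity onto the fixed-point algebra is obtained by averaging an arbitrary element of $A\otimes K(L^2(G))$ over the $G$-action --- invariance forcing the associated kernel to be determined by its restriction to $\{e\}\times G$ --- and rewriting the resulting invariant element through $\mu$ and $w$. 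This surjectivity is the one genuinely non-trivial step of the whole argument.

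Granting $\Theta_A$ and tensoring with the trivially acted $K$, we get $(A\otimes K_G)^G\cong(A\rtimes_\alpha G)\otimes K$, hence
$$V^G(A)\ \cong\ V\bigl((A\otimes K_G)^G\bigr)\ \cong\ V\bigl((A\rtimes_\alpha G)\otimes K\bigr)\ \cong\ V(A\rtimes_\alpha G),$$
the last isomorphism by stability of $V$. It then remains to pass to Grothendieck groups. Although $A\rtimes_\alpha G$ need not be unital when $G$ is infinite, it carries an approximate unit of increasing projections --- for instance the $\Theta_A$-images of the invariant projections $1_A\otimes q_F$, with $q_F$ the spectral projection of $L^2(G)$ onto the isotypic components indexed by a finite subset $F\subseteq\hat G$ --- so $A\rtimes_\alpha G$ is a direct limit of the unital corners $(1_A\otimes q_F)\bigl((A\rtimes_\alpha G)\otimes K\bigr)(1_A\otimes q_F)$, and by continuity of $K_0$, $V$ and $\Gamma$ we get $K_0(A\rtimes_\alpha G)=\Gamma\bigl(V(A\rtimes_\alpha G)\bigr)$. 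Combining, $K_0^G(A)=\Gamma\bigl(V^G(A)\bigr)\cong\Gamma\bigl(V(A\rtimes_\alpha G)\bigr)=K_0(A\rtimes_\alpha G)$, and naturality in $A$ is inherited from that of $\Theta_A$ together with the functoriality of $A\mapsto A\rtimes_\alpha G$, of $V^G$ and of $\Gamma$.

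As indicated, I expect the surjectivity half of the Green--Julg isomorphism $\Theta_A$ to be the main obstacle; the rest is formal once that is in hand. One could alternatively avoid $\Theta_A$ entirely and argue through modules, setting up a direct-sum-preserving correspondence between finitely generated projective $(A,G,\alpha)$-modules and finitely generated projective $A\rtimes_\alpha G$-modules and checking that it preserves finite generation and projectivity --- once more, the place where compactness of $G$ is essential.
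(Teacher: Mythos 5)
The paper offers no proof of this statement: it is Julg's classical theorem, and the text simply defers to Blackadar's book (Theorem 11.7.1), so your proposal is not competing with an in-paper argument. It is, however, a correct reconstruction of the standard proof and follows essentially the route the cited reference covers. The two pillars are right: first, by \eqref{eq:equimvn} the semigroup $V^G(A)$ is literally $V\bigl((A\otimes K_G)^G\bigr)$, since a $G$-invariant projection is a projection of the fixed-point algebra and $\simeq_G$ is witnessed by a $G$-invariant partial isometry, i.e.\ by an element of the fixed-point algebra; second, the Green--Julg isomorphism $\bigl(A\otimes K(L^2(G))\bigr)^{\alpha\otimes\Ad(\lambda)}\cong A\rtimes_\alpha G$, whose covariant pair, kernel computation and injectivity-via-amenability you set up correctly, and whose surjectivity you rightly single out as the only substantive step: the averaging projection onto the fixed-point algebra preserves the dense subalgebra of operators with continuous $A$-valued kernels, an invariant continuous kernel $k$ satisfies $k(h,h')=\alpha_h\bigl(k(e,h^{-1}h')\bigr)$ and hence lies in the image of $C(G,A)$, and since the image of the full crossed product under a $*$-homomorphism is closed, density suffices. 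Two small points to tidy: the unital corner you write down should be a corner of $(A\otimes K_G)^G$ (equivalently, of $(A\rtimes_\alpha G)\otimes K$ cut by the \emph{image} of $1_A\otimes q_F$ under the isomorphism), since $1_A\otimes q_F$ as written does not live in $(A\rtimes_\alpha G)\otimes K$; and the availability of $1_A$ and of the finite-rank isotypic projections $q_F$, $F\Subset\hat G$, rests on the standing assumptions of that section (unital $A$, compact $G$), which is worth flagging because the identity $K_0=\Gamma(V)$ genuinely fails without an approximate unit of projections.
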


We refer the reader to \cite[Theorem 11.7.1]{blackadarK} for a proof of the
above result.


\subsection{Equivariant KK-theory}

We provide now a brief account of equivariant
KK-theory. As in \cite[\S20]{blackadarK}, we also restrict our attention to the
second countable case, to which we also add compactness of all the groups,
although some of the following results hold in greater generality.

\begin{definition} Let $(A,G,\alpha)$ be a $G$-algebra. A Hilbert
$(A,G,\alpha)$-module is a Hilbert $A$-module $E$ with an action of $G$ on $E$
which is continuous in the sense that the map $g\mapsto \norm{(gx,gx)}$, where $(\ \cdot\ ,\ \cdot\ )$ denotes the inner product on $E$, is
continuous for any $x\in E$, and compatible with the action $\alpha$ on $A$,
i.e.
	$$g(xa) = (gx)\alpha_g(a),\qquad\forall g\in G,x\in E,a\in A.$$
\end{definition}

The grading can be extended to both $G$-algebras and equivariant Hilbert modules
of the above definition. Then many of the results and operations involving
graded Hilbert modules extend to the equivariant setting, including Kasparov's
stabilisation theorem (see, for instance, \cite[Theorem 1.2.12]{jt}). The equivariant analogue
of a Kasparov triple is provided by the following definition.

\begin{definition} Let $A$ and $B$ be graded $G$-algebras. A Kasparov $A$-$B$
$G$-triple is a triple $(E,\phi,T)$, where $E$ is a countably generated Hilbert
$(B,G,\beta)$-module, $\phi:A\to B(E)$ is an equivariant graded $*$-homomorphism
and $T\in B(E)$ is a $G$-invariant operator that satisfy
\begin{enumerate}[(EKT.1)]
	\item $[\phi(a),T]\in K(E)$ for any $a\in A$;
	\item $\phi(a)(T^2-1_{B(E)})\in K(E)$ for any $a\in A$;
	\item $\phi(a)(T-T^*)\in K(E)$ for any $a\in A$.
\end{enumerate}
\end{definition}

The above definition differs slightly from the more standard one of \cite[Definition 20.2.1]{blackadarK}. Indeed, here we are making use of the assumption that the group $G$ is compact to average the operator $T$ (see Proposition 20.2.4 of \cite{blackadarK} for more details). The equivariant KK-group of the pair of \Cs-algebras $A$ and
$B$ is then defined as in the non-equivariant case by taking homotopy classes of
Kasparov $A$-$B$ $G$-triples. One then gets a bivariant functor $KK^G$ which,
likewise $KK$, is contravariant in the first argument and covariant in the
second.

With the natural identification $KK^G(\CC, B)\cong K^G_0(B)$, that occurs when $G$ acts trivially on $\CC$, one sees
immediately that the representation ring of the group $G$ is recovered as
$R(G)\cong KK^G(\CC,\CC)$. More generally, when $A$ is a $G$-algebra with the trivial action of $G$, one has the isomorphism
	$$KK^G(A,B) \cong KK(A,B\rtimes G).$$

Furthermore, there is a group homomorphism
$j_G:KK^G(A,B)\to KK(A\rtimes G,B\rtimes G)$ that is functorial in $A$ and $B$
and compatible with Kasparov product, as shown, e.g., in \cite[Theorem 20.6.2]{blackadarK}.


\subsection{Open projections}

In Section \ref{sec:equics} we define the equivariant Cuntz semigroup and we give an open projection picture. Hence, for the reader's convenience, we recall the definition of open projections and related notions.

In \cite{akemann}, Akemann has given a generalisation of the notion of open
subsets to non-commutative \Cs-algebras by naturally replacing sets with
projections.

\begin{definition}[Open projection]\label{def:op} Let $A$ be any \Cs-algebra. A
projection $p\in A\dd$ is \emph{open} if it is the strong limit of an increasing
net of positive elements $\{a_i\}_{i\in I}\subseteq A_+$.
\end{definition}

Equivalently \cite{akemann}, a projection $p\in A\dd$ is open if it belongs to the strong
closure of the hereditary subalgebra $ A_p\subseteq A$,
where
  $$A_p := p A\dd p \cap  A = pAp \cap A.$$
Observe that, for any positive element $a\in A_+$ that has $p\in A\dd$ as support projection, one has
	$$A_p = A_a,$$
where $A_a$ is the hereditary \Cs-subalgebra of $A$ generated by $a$, viz.
	$$A_a := \overline{aAa}\label{eq:her}.$$

Throughout, the set of all the open projections of $A$ in $A\dd$ will be denoted
by $P_o( A\dd)$.

A projection $p\in A\dd$ is said to be
\emph{closed} if its complement $1-p\in A\dd$ is an open projection. The
supremum of an arbitrary set $P\subset P_o(A\dd)$ of open projections in $A\dd$
is still an open projection and, likewise, the infimum of an arbitrary family of
closed projections is still a closed projection, by results in \cite{akemann}. Therefore, the closure of an open
projection $p\in A\dd$ can be defined as
  $$\overline p := \inf\{q^*q=q\in A\dd\ |\ 1-q\in P_o( A\dd)\ \wedge\ p\leq
  q\}.$$
Let $B$ be a \Cs-subalgebra of $A$. A closed projection $p\in A\dd$ is said to
be \emph{compact} in $ B$ if there exists a positive contraction $a\in B_+$
such that $pa = p$.

For a positive contraction $a$ of a \Cs-algebra $A$, its \emph{support projection} $p_a$ is the open projection in $A\dd$ given by
	$$p_a := \SOT\lim_{n\to\infty}a^{\frac1n}.$$

	\section{\label{sec:equibcs}Definitions and Main Properties}

We introduce here an equivariant extension of the bivariant Cuntz semigroup developed in \cite{bcs}. As already mentioned in the previous section, in this paper we restrict our attention to actions by second countable compact groups.

Based on Definition \ref{def:bcs} and the presentation of the equivariant Murray-von Neumann semigroup given by Equation \ref{eq:equimvn}, the equivariant extension of the bivariant Cuntz semigroup of \cite{bcs} that we seek in this paper should be based on a suitable notion of comparison between \emph{equivariant} c.p.c.~order zero maps.

A c.p.c.~order zero map $\phi$ between two $G$-algebras $(A,G,\alpha)$ and
$(B,G,\beta)$ is said to be \emph{equivariant} if it is an intertwiner for the
actions $\alpha$ and $\beta$, that is
	$$\phi\circ\alpha_g = \beta_g\circ\phi,\qquad\forall g\in G.$$
Unless otherwise stated, it will be assumed that a c.p.c.~order zero map
$\phi:A\to B$ between the $G$-algebras $A$ and $B$ is always equivariant. The Cuntz
comparison of equivariant c.p.c.~order zero maps then takes the following form.

\begin{definition} Let $A$ and $B$ be $G$-algebras, and let
$\phi,\psi:A\to B$ be c.p.c.~order zero maps. We say that $\phi$ is
equivariantly Cuntz-subequivalent to $\psi$ (in symbols $\phi\precsim_G\psi$) if
there exists a $G$-invariant sequence $\seq b\subset B^G$ such that
	$$\lim_{n\to\infty}\norm{b_n\psi(a)b_n^*-\phi(a)} = 0$$
for any $a\in A$.
\end{definition}

We observe that, in the separable case, the above definition has a standard \emph{localisation}. Two c.p.c.~order zero maps $\phi,\psi:A\to B$ are such that $\phi\precsim_G\psi$ if and only if, for every finite subset $F\Subset A$ and $\eps > 0$ there is $b\in B^G$ such that $\norm{b\psi(a)b^*-\phi(a)}<\eps$ for any $a\in F$. Reflexivity follows trivially from the fact that, by a simple averaging argument, any $G$-algebra, with $G$ compact, admits a $G$-invariant approximate unit.

Let $\sim_G$ denote the relation arising from the antisymmetrisation
of the relation $\precsim_G$ just defined, that is $\phi\sim_G\psi$ if
$\phi\precsim_G\psi$ and $\psi\precsim_G\phi$.

\begin{lemma}\label{lem:reflexivity} Let $A$ and $B$ be $G$-algebras. For any equivariant c.p.c.~order zero map $\phi:A\to B$, finite subset $F\Subset A$ and $\eps > 0$, there exists $x\in C^*(\phi(A))^G$ such that $\norm{x\phi(a)x^*-\phi(a)}<\eps$.
\end{lemma}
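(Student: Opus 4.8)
The statement is the equivariant analogue of the standard fact that a c.p.c.\ order zero map $\phi$ is approximately "fixed" by elements of the C$^*$-algebra it generates, and the natural route is to extract such an approximating element from the structure theorem (Theorem~\ref{thm:wz}) while being careful that the averaging over $G$ keeps things inside the fixed-point algebra $C^*(\phi(A))^G$. So the first step is to invoke Theorem~\ref{thm:wz}: write $C_\phi = C^*(\phi(A))$, pick the positive element $h_\phi \in \M(C_\phi)_+ \cap C_\phi'$ and the non-degenerate $*$-homomorphism $\pi_\phi: A \to \M(C_\phi) \cap \{h_\phi\}'$ with $\phi(a) = h_\phi \pi_\phi(a)$. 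Since $\phi$ is equivariant and $C_\phi$ is generated by $\phi(A)$, the action $\beta$ of $G$ restricts to $C_\phi$ (and extends to $\M(C_\phi)$), and by uniqueness in the structure theorem $h_\phi$ is $G$-invariant and $\pi_\phi$ is equivariant; I would record this as a preliminary observation, or else argue directly that $h_\phi^{1/n}$ is $G$-invariant because it is a strong/norm limit of polynomials in $h_\phi$.

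\textbf{Main construction.} Given $F \Subset A$ and $\eps > 0$, the plan is to take $x := h_\phi^{1/n}$ (or, since $h_\phi$ need not lie in $C_\phi$ itself, a positive contraction in $C_\phi^G$ that is close to a spectral cutoff of $h_\phi$) for $n$ large. Using $\phi(a) = h_\phi\pi_\phi(a)$ and that $h_\phi$ commutes with $C_\phi$ and with $\pi_\phi(A)$, one computes
\[
  x\phi(a)x^* = h_\phi^{1/n}\,h_\phi\,\pi_\phi(a)\,h_\phi^{1/n} = h_\phi^{1+2/n}\pi_\phi(a),
\]
so that $\norm{x\phi(a)x^* - \phi(a)} = \norm{(h_\phi^{1+2/n} - h_\phi)\pi_\phi(a)} \le \norm{h_\phi^{1+2/n} - h_\phi}\cdot\norm{\pi_\phi(a)} \le \norm{h_\phi^{1+2/n} - h_\phi}\cdot\norm{a}$, using $\norm{\pi_\phi} \le 1$. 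Since $t \mapsto t^{1+2/n}$ converges uniformly to $t \mapsto t$ on $[0,\norm{h_\phi}] \subseteq [0,1]$ as $n\to\infty$, we have $\norm{h_\phi^{1+2/n} - h_\phi} \to 0$, and as $F$ is finite we can choose a single $n$ making $\norm{x\phi(a)x^* - x^*\phi(a)x \cdots} < \eps$ simultaneously for all $a \in F$. The remaining point is that $x = h_\phi^{1/n}$ genuinely sits in $C^*(\phi(A))^G$: it is $G$-invariant by the observation above, and it lies in $C_\phi$ because $h_\phi^{1/n} = h_\phi \cdot h_\phi^{1/n - 1}$-type manipulations, or more cleanly because $h_\phi^{1/n}\phi(a) = \phi(a)h_\phi^{1/n-1}\cdots$; in fact the cleanest argument is that $h_\phi \in \M(C_\phi)$ with $\phi(a) = h_\phi\pi_\phi(a)$ forces $h_\phi \phi(A) \subseteq C_\phi$ and hence $h_\phi^{1+2/n}\pi_\phi(a) \in C_\phi$, while one takes $x$ itself to be an appropriate $C_\phi^G$-element approximating $h_\phi^{1/n}$ to within $\eps/(3\max_{a\in F}\norm a + 1)$, obtained from a continuous functional calculus element supported away from $0$.

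\textbf{Expected obstacle.} The routine part is the functional calculus estimate; the one genuine subtlety is the membership $x \in C^*(\phi(A))^G$ rather than merely $x \in \M(C_\phi)^G$, since $h_\phi$ itself may fail to be in $C_\phi$ (e.g.\ when $\phi$ is a non-unital $*$-homomorphism, $h_\phi$ is a unit for $C_\phi$ living in $\M(C_\phi)$). I would handle this by fixing $\delta > 0$ small, letting $f_\delta$ be the continuous function that is $0$ on $[0,\delta]$, linear on $[\delta,2\delta]$, and $1$ on $[2\delta,\infty)$, and setting $x := f_\delta(h_\phi)^{1/2}$ or similar: then $x \in C_\phi$ because $f_\delta$ vanishes at $0$ and $h_\phi$ acts non-degenerately on $C_\phi$ (so $f_\delta(h_\phi) = \lim f_\delta(h_\phi) e_\lambda$ for an approximate unit $e_\lambda$ of $C_\phi$, giving $f_\delta(h_\phi) \in C_\phi$), it is $G$-invariant, and $\norm{x\phi(a)x^* - \phi(a)}$ is controlled by $\norm{(f_\delta(h_\phi)h_\phi - h_\phi)\pi_\phi(a)} \le \norm{f_\delta(h_\phi)h_\phi - h_\phi}\norm a$, which is small for small $\delta$ since $t f_\delta(t) \to t$ uniformly on $[0,1]$. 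Thus a single choice of $\delta$ depending on $\eps$ and $\max_{a\in F}\norm a$ finishes the proof, and the compactness of $G$ enters only through the implicit fact (already used in the discussion preceding the lemma) that such fixed-point elements are plentiful — here we do not even need to average, since $f_\delta(h_\phi)$ is automatically invariant.
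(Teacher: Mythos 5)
Your estimate $\norm{h_\phi^{1+2/n}\pi_\phi(a)-h_\phi\pi_\phi(a)}\le\norm{h_\phi^{1+2/n}-h_\phi}\,\norm a$ is fine, and you correctly identify the real issue — that the candidate $x$ must lie in $C_\phi:=C^*(\phi(A))$ and not merely in $\M(C_\phi)$ — but your proposed fix does not resolve it. Cutting off $h_\phi$ away from $0$ does nothing here: in your own motivating example where $\phi$ is a non-unital $*$-homomorphism one has $h_\phi=1_{\M(C_\phi)}$, so $f_\delta(h_\phi)=f_\delta(1)=1_{\M(C_\phi)}\notin C_\phi$. The argument you sketch for membership, namely $f_\delta(h_\phi)=\lim_\lambda f_\delta(h_\phi)e_\lambda$ with $(e_\lambda)$ an approximate unit of $C_\phi$, only gives convergence in the strict topology, not in norm, so it does not place $f_\delta(h_\phi)$ in $C_\phi$. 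The obstruction is not the behaviour of $h_\phi$ near $0$ in its spectrum; it is that $h_\phi$ is a multiplier, and no functional calculus applied to $h_\phi$ alone will push it into the (possibly non-unital) algebra $C_\phi$.

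The missing ingredient, which is exactly what the paper's proof supplies, is to multiply by something coming from $A$: take a $G$-invariant approximate unit element $e\in A^G$ with $\norm{eae^*-a}<\eps$ for $a\in F$ (this uses compactness of $G$), and set $x:=h_\phi^{1/m}\pi_\phi(e)=\phi^{1/m}(e)$. This element does lie in $C_\phi^G$, because the order-zero functional calculus $f(\phi)=f(h_\phi)\pi_\phi$ takes values in $C_\phi$ (it is the image of $f\otimes\,\cdot\,$ under the surjection $C_0((0,1])\otimes A\to C_\phi$), and it is $G$-invariant since $h_\phi$ is invariant, $\pi_\phi$ is equivariant, and $e\in A^G$. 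Then $x\phi(a)x^*=h_\phi^{1/m}\phi(eae^*)h_\phi^{1/m}$ and the two-term triangle inequality gives the bound. Alternatively, an even shorter correct argument is available: $C_\phi$ is itself a $G$-algebra with $G$ compact, so it admits a $G$-invariant approximate unit $(u_\lambda)\subset C_\phi^G$, and $\norm{u_\lambda\phi(a)u_\lambda-\phi(a)}\to0$ for each of the finitely many $a\in F$ simply because $\phi(a)\in C_\phi$. Either route closes the gap; as written, your construction produces an element of $\M(C_\phi)^G$ only.
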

\begin{proof} Fix a finite subset $F\Subset A$ and $\eps > 0$. By the existence of a $G$-invariant approximate unit for $A$, there is $e\in A^G$ such that $\norm{eae^* - a} < \eps$ for any $a\in F$. Let $h_\phi$ and $\pi_\phi$ be the positive element and the $*$-homomorphism coming from Theorem \ref{thm:equiwz} applied to $\phi$. As $h_\phi^{\frac2n}h_\phi$ converges to $h_\phi$ in norm, there is $m\in\IN$ such that $\norm{h_\phi^{\frac1m}\phi(eae^*)h_\phi^{\frac1m}-\phi(eae^*)}<\eps$ for every $a\in F$. With $x:=h_\phi^{\frac1m}\pi_\phi(e)=\phi^{\frac1m}(e)\in B^G$, one has the estimate
	\begin{align*}
		\norm{x\phi(a)x^*-\phi(a)} &= \norm{h_\phi^{\frac1m}\phi(eae^*)h_\phi^{\frac1m} - \phi(a)}\\
			&\leq \norm{h_\phi^{\frac1m}\phi(eae^*)h_\phi^{\frac1m} - \phi(eae^*)} +
				\norm{\phi(eae^*) - \phi(a)}\\
			&< 2\eps
	\end{align*}
for any $a\in F$.
\end{proof}

If $(A,G,\alpha)$ is a
$G$-algebra, we shall always assume that the tensor product $A\otimes K_G$ is
equipped with the diagonal action $\alpha\otimes(\lambda_G\otimes \id_K)$, where
$\lambda_G$ is the left-regular representation of $G$ on $L^2(G)$. As an equivariant generalisation of the bivariant Cuntz
semigroup $\Cu$ of \cite{bcs} we then give the following
definition.

\begin{definition}\label{def:equi_bcu} Let $A$ and $B$ be
$G$-algebras. The equivariant bivariant Cuntz semigroup $\Cu^G(A,B)$ of $A$ and
$B$ is the set of equivalence classes
	$$\Cu^G(A,B) := \{\phi:A\otimes K_G\to B\otimes K_G\ |\ \phi\text{ equiv.
	c.p.c.~order zero map}\}/\sim_G.$$
\end{definition}

As in the case of the ordinary Cuntz semigroup and of its bivariant extension proposed in \cite{bcs}, the above semigroups can be equipped with a positive order structure by requiring
	$$[\phi]\leq_G[\psi]$$
whenever $\phi\precsim_G\psi$. Hence, $(\Cu^G(A,B),\leq_G)$ becomes a positively ordered Abelian monoid for any pair of $G$-algebras $A$ and $B$.

As discussed in \cite{wz}, given any two \Cs-algebras $A$ and $B$, there is a one-to-one correspondence between c.p.c.~order zero maps from $A$ to $B$ and $*$-homomorphisms from the cone over $A$, i.e. $C_0((0,1])\otimes A$, to $B$. This result generalises to the equivariant setting by equipping the cone over $A$ with the diagonal action, as shown by \cite[Corollary 2.10]{grok}.

We now give an equivariant extension of the structure theorem for c.p.c.~order zero maps of \cite{wz}.

\begin{theorem}\label{thm:equiwz} Let $(A,G,\alpha)$ and $(B,G,\beta)$ be
$G$-algebras and let $\phi:A\to B$ be an equivariant c.p.c.~order zero map. Set
$C_\phi := C^*(\phi(A))$ and introduce an action of $G$ on $\M(C_\phi)$ by
restricting the bidual maps $\beta_g\dd$, $g\in G$, onto it. Then there exists a
$G$-invariant positive element $h_\phi\in\M(C_\phi)_+^G\cap C_\phi'$, with
$\norm{h_\phi} = \norm\phi$, and a non-degenerate $*$-homomorphism
$\pi_\phi:A\to\M(C_\phi)\cap\{h_\phi\}'$ such that $\beta_g\dd\circ\pi_\phi =
\pi_\phi\circ\alpha_g$ for any $g\in G$, and
	$$\phi(a) = h_\phi\pi_\phi(a),\qquad\forall a\in A.$$
\end{theorem}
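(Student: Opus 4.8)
The plan is to reduce the equivariant statement to the non-equivariant structure theorem of Winter and Zacharias (Theorem \ref{thm:wz}) and then check that the canonical objects it produces are automatically $G$-equivariant. First I would apply Theorem \ref{thm:wz} to the underlying c.p.c.~order zero map $\phi:A\to B$ (forgetting the actions), obtaining $h_\phi\in\M(C_\phi)_+\cap C_\phi'$ with $\norm{h_\phi}=\norm\phi$ and a non-degenerate $*$-homomorphism $\pi_\phi:A\to\M(C_\phi)\cap\{h_\phi\}'$ with $\phi(a)=h_\phi\pi_\phi(a)$ for all $a\in A$. The key point is that these objects are \emph{canonically} attached to $\phi$; for instance, as shown in \cite{wz}, $h_\phi$ is recovered as a strict limit of $\phi(e_\lambda)$ along an approximate unit $\{e_\lambda\}$ of $A$, and $\pi_\phi$ is determined by $\pi_\phi(a)\phi(b)=\phi(ab)$ together with non-degeneracy. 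Uniqueness of such data is exactly what lets equivariance be transported for free.

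Next I would verify that the $G$-action on $\M(C_\phi)$ is well defined: since $\beta_g\dd$ restricts to $\beta_g$ on $B$ and $C_\phi=C^*(\phi(A))$ is $\beta$-invariant (because $\beta_g(\phi(a))=\phi(\alpha_g(a))\in\phi(A)$ by equivariance of $\phi$), each $\beta_g\dd$ carries $\M(C_\phi)=\{m\in C_\phi\dd : mC_\phi+C_\phi m\subseteq C_\phi\}$ into itself, giving a genuine (though only strictly continuous, which suffices here) action on $\M(C_\phi)$. Then I would run the uniqueness argument: applying Theorem \ref{thm:wz} to the map $\phi$ but composing everywhere with $\beta_g\dd$ and $\alpha_g$, one sees that $\beta_g\dd(h_\phi)$ and $\beta_g\dd\circ\pi_\phi\circ\alpha_g^{-1}$ also satisfy the defining relations of the structure theorem for $\phi$ (using $\phi\circ\alpha_g=\beta_g\circ\phi$), hence by uniqueness they coincide with $h_\phi$ and $\pi_\phi$ respectively. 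This gives $\beta_g\dd(h_\phi)=h_\phi$, i.e. $h_\phi\in\M(C_\phi)_+^G$, and $\beta_g\dd\circ\pi_\phi=\pi_\phi\circ\alpha_g$. Alternatively, and perhaps more cleanly, one can obtain $G$-invariance of $h_\phi$ directly by choosing a $G$-invariant approximate unit $\{e_\lambda\}\subset A^G$ (available since $G$ is compact, as used in Lemma \ref{lem:reflexivity}), so that $h_\phi=\mathrm{strict}\text{-}\lim\phi(e_\lambda)$ is a strict limit of $G$-fixed elements.

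The remaining properties are inherited: $\norm{h_\phi}=\norm\phi$ and $h_\phi\in C_\phi'$, $\pi_\phi(A)\subseteq\M(C_\phi)\cap\{h_\phi\}'$, non-degeneracy of $\pi_\phi$, and $\phi(a)=h_\phi\pi_\phi(a)$ all come straight from Theorem \ref{thm:wz} and are untouched by the equivariance discussion. I expect the main obstacle to be purely bookkeeping: making precise the sense in which Theorem \ref{thm:wz} produces \emph{unique} data (the statement as quoted asserts existence only), and checking the compatibility of $\beta_g\dd$ with the multiplier algebra and with the strict topology so that the limit-characterisation of $h_\phi$ and $\pi_\phi$ is legitimate. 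Once that is pinned down, the equivariance of $h_\phi$ and $\pi_\phi$ follows formally, with no genuinely new analytic input beyond the compactness of $G$ used to produce an invariant approximate unit.
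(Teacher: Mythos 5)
Your proposal is correct, and it follows the same overall reduction as the paper (apply the non-equivariant Winter--Zacharias theorem, then show the resulting data are $G$-equivariant), but the mechanism for the equivariance step is genuinely different. The paper argues directly: for an arbitrary approximate unit $\seq e\subset A$, equivariance of $\phi$ gives $\phi(\alpha_g(e_n))=\beta_g(\phi(e_n))$ for each $n$, and since $\alpha_g(e_n)$ is again an approximate unit, taking \SOT-limits yields $h_\phi-\beta_g\dd(h_\phi)=0$; then $h_\phi^{1/n}[\pi_\phi(\alpha_g(a))-\beta_g\dd(\pi_\phi(a))]=0$ and another \SOT-limit gives the intertwining of $\pi_\phi$. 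You instead invoke uniqueness of the pair $(h_\phi,\pi_\phi)$ and check that $(\beta_g\dd(h_\phi),\,\beta_g\dd\circ\pi_\phi\circ\alpha_g^{-1})$ satisfies the same defining relations. This is valid, but you correctly flag that uniqueness is not part of the quoted statement and must be supplied: it does hold, since $h\pi(e_n)=\phi(e_n)=h'\pi'(e_n)$ forces $h=h'$ as strict limits (by non-degeneracy), and then $h(\pi(a)-\pi'(a))=0$ gives $h^{1/2^k}(\pi(a)-\pi'(a))=0$ for all $k$, with $h^{1/2^k}\to 1$ strictly in $\M(C_\phi)$ because $h^{1/2^k}\phi(a)\to\phi(a)$ in norm; so the argument closes, at the cost of a small lemma the paper's direct computation avoids. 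Your alternative route via a $G$-invariant approximate unit handles $h_\phi$ cleanly but not $\pi_\phi$, so the uniqueness (or the paper's $h_\phi^{1/n}$ trick) is still needed for the intertwining relation. Your check that $C_\phi$ is $\beta$-invariant and that $\beta_g\dd$ restricts to $\M(C_\phi)$ is a point the paper glosses over and is worth keeping.
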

\begin{proof} By the structure theorem of \cite{wz}, there are a positive element
$h_\phi\in\M(C_\phi)_+\cap C_\phi'$ with $\norm{h_\phi} = \norm\phi$ and a
non-degenerate $*$-homomorphism $\pi_\phi:A\to\M(C_\phi)\cap\{h_\phi\}'$ such
that $\phi(a) = h_\phi\pi_\phi(a)$ for any $a\in A$. If $\seq e\subset A$ is an
approximate unit, the equivariance of $\phi$ implies
$h_\phi\pi_\phi(\alpha_g(e_n))=\beta_g(h_\phi\pi_\phi(e_n))$ for any $n\in\IN$,
whence
	\begin{align*}
		0	&= \SOT\lim_{n\to\infty}[\phi(\alpha_g(e_n))-\beta_g(\phi(e_n))]\\
			&= h_\phi-\beta_g\dd(h_\phi),\qquad\forall g\in G,
	\end{align*}
	which shows that $h_\phi$ is $G$-invariant in $\M(C_\phi)$, with the action
	given by the restriction of $\beta\dd$ to this multiplier algebra. Since
	$h_\phi^{\frac1n}$ is also $G$-invariant, equivariance also implies
	$h_\phi^{\frac1n}[\pi_\phi(\alpha_g(a))-\beta_g\dd(\pi_\phi(a))]=0$ for any
	$n\in\IN$ and $a\in A$, whence
	\begin{align*}
		0 &= \SOT\lim_{n\to\infty} h_\phi^{\frac1n} [\pi_\phi(\alpha_g(a)) -
		\beta_g\dd(\pi_\phi(a))]\\
			&= \pi_\phi(\alpha_g(a))-\beta_g\dd(\pi_\phi(a)),\qquad\forall a\in A
	\end{align*}
	i.e $\pi_\phi\circ\alpha_g = \beta_g\dd\circ\pi_\phi$, for any $g\in G$.
\end{proof}

The proof of the above results nowhere uses that the group $G$ is compact and therefore it applies to the non-compact case as well. The fact that such a result holds for the equivariant case allows us to give equivariant generalisations of some of the results in \cite{bcs}.

\begin{proposition}\label{prop:compositions} Let $A$, $B$ and $C$ be $G$-algebras, and let $\phi,\psi:A\to B$, $\eta,\theta:B\to C$ be equivariant c.p.c.
order zero maps such that $\phi\precsim_G\psi$ and $\eta\precsim_G\theta$. Then
$\eta\circ\phi\precsim_G\theta\circ\phi$ and $\eta\circ\phi\precsim_G\eta\circ\psi$.
\end{proposition}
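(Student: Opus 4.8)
The plan is to prove the two subequivalences separately: the first one reduces at once to the hypothesis on $\eta$ and $\theta$, while the second is the substantial part and rests on the structure theorem for equivariant c.p.c.~order zero maps (Theorem \ref{thm:equiwz}).

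For $\eta\circ\phi\precsim_G\theta\circ\phi$ I would take a $G$-invariant sequence $\seq b\subset C^G$ with $\norm{b_n\theta(b)b_n^*-\eta(b)}\to 0$ for all $b\in B$, which exists because $\eta\precsim_G\theta$, and specialise $b$ to elements of the range of $\phi$: for every $a\in A$ one has $\norm{b_n(\theta\circ\phi)(a)b_n^*-(\eta\circ\phi)(a)}=\norm{b_n\theta(\phi(a))b_n^*-\eta(\phi(a))}\to 0$, so the very same $G$-invariant sequence witnesses $\eta\circ\phi\precsim_G\theta\circ\phi$. (This half does not use $\phi\precsim_G\psi$.)

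For $\eta\circ\phi\precsim_G\eta\circ\psi$ let $\seq d\subset B^G$ be a $G$-invariant sequence with $\norm{d_n\psi(a)d_n^*-\phi(a)}\to 0$ for all $a\in A$. Here $\eta$ is not multiplicative, so one cannot conjugate inside $\eta$ directly, and the remedy is to invoke Theorem \ref{thm:equiwz} for $\eta$: write $\eta(b)=h_\eta\pi_\eta(b)$ with $h_\eta\in\M(C_\eta)_+^G\cap C_\eta'$, $\norm{h_\eta}\leq 1$, and $\pi_\eta:B\to\M(C_\eta)\cap\{h_\eta\}'$ a non-degenerate equivariant $*$-homomorphism. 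For each $n$ the element $h_\eta^{1/n}\pi_\eta(b)$ lies in $C_\eta\subseteq C$ — it is a norm limit of elements of the form $q(h_\eta)\eta(b)$ with $q$ a polynomial, or, as in \cite{wz}, $b\mapsto h_\eta^{1/n}\pi_\eta(b)$ is a c.p.c.~order zero map into $C_\eta$ — and it is $G$-invariant whenever $b\in B^G$, since $h_\eta^{1/n}$ is $G$-invariant by functional calculus applied to $h_\eta$ and $\pi_\eta$ is equivariant. Therefore $c_n:=h_\eta^{1/n}\pi_\eta(d_n)$ defines a $G$-invariant sequence $\seq c\subset C^G$. Using that $h_\eta$ and $h_\eta^{1/n}$ commute with $\pi_\eta(B)$ and that $\pi_\eta$ is a $*$-homomorphism one computes
$$c_n(\eta\circ\psi)(a)c_n^*=h_\eta^{1+2/n}\,\pi_\eta\big(d_n\psi(a)d_n^*\big),\qquad (\eta\circ\phi)(a)=h_\eta\,\pi_\eta(\phi(a)),$$
so, inserting $h_\eta\pi_\eta(d_n\psi(a)d_n^*)$ and using $\norm{\pi_\eta}\leq 1$ and $\norm{h_\eta}\leq 1$,
$$\norm{c_n(\eta\circ\psi)(a)c_n^*-(\eta\circ\phi)(a)}\leq\norm{h_\eta^{1+2/n}-h_\eta}\,\norm{d_n\psi(a)d_n^*}+\norm{d_n\psi(a)d_n^*-\phi(a)}.$$
The second summand tends to $0$ by hypothesis, and in the first the factor $\norm{d_n\psi(a)d_n^*}$ is bounded (it converges to $\norm{\phi(a)}$) while $\norm{h_\eta^{1+2/n}-h_\eta}=\sup_{t\in[0,1]}|t^{1+2/n}-t|\to 0$; hence the left-hand side tends to $0$ for every $a\in A$ and $\seq c$ witnesses $\eta\circ\phi\precsim_G\eta\circ\psi$. (This half uses only $\phi\precsim_G\psi$.)

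The point that needs care is that the corrector must genuinely lie in $C$ and be $G$-invariant: the naive candidate $\pi_\eta(d_n)$ is merely a multiplier of $C_\eta$, and it is precisely the extra factor $h_\eta^{1/n}$ — harnessing the $G$-invariance of $h_\eta$ provided by Theorem \ref{thm:equiwz} — that cures both defects simultaneously, at the harmless cost of an $h_\eta^{1+2/n}$ in place of $h_\eta$ in the final estimate; everything else is a routine $C^*$-norm estimate. One could instead run the second half through the localised form of $\precsim_G$ available in the separable setting — approximate $\phi$ by $b\psi(\cdot)b^*$ with $b\in B^G$ on a finite set $F\Subset A$, then replace $\pi_\eta(b)$ by $h_\eta^{1/m}\pi_\eta(b)\in C^G$ for $m$ large — but the sequential argument above avoids invoking localisation altogether.
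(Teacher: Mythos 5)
Your proof is correct and follows essentially the same route as the paper: the first subequivalence is immediate by restricting the witnessing sequence for $\eta\precsim_G\theta$ to the range of $\phi$, and the second hinges on the same key device of replacing $\pi_\eta(d_n)$ by $\eta^{1/n}(d_n)=h_\eta^{1/n}\pi_\eta(d_n)\in C^G$ via Theorem \ref{thm:equiwz}. The only cosmetic difference is that the paper runs the estimate in the localised (finite subset, $\eps$) form, picking $n$ after $b$, whereas you couple the two indices in a single sequence and control the extra term by the uniform convergence $\norm{h_\eta^{1+2/n}-h_\eta}\to 0$; both are valid.
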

\begin{proof} The implication
$\eta\precsim_G\theta\Rightarrow\eta\circ\phi\precsim_G\theta\circ\phi$ is trivial.
For the other implication, let $h_\eta$ and $\pi_\eta$ be the positive element and support $*$-homomorphism coming from Theorem \ref{thm:equiwz} applied to $\eta$. For a finite subset $F\Subset A$ and $\eps > 0$, find $b\in B^G$ such that
	$$\norm{b\psi(a)b^* - \phi(a)} < \eps$$
for any $a\in F$. Since $h_\eta^{\frac2n}h_\eta$ converges to $h_\eta$ in norm, there exists $n \in \IN$ such that
	$$\norm{h_\eta^{\frac1n}\eta(b\psi(a)b^*)h_\eta^{\frac1n} - \eta(b\psi(a)b^*)} < \eps,$$
for any $a \in F$. Therefore, with the element $d:=h_\eta^{\frac1n}\pi_\eta(b)=\eta^{\frac1n}(b)\in C^G$, one has the estimate
	\begin{align*}
		\norm{d(\eta\circ\psi)(a)d^*-(\eta\circ\phi)(a)}
			&\leq\norm{h_\eta^{\frac1n}\eta(b\psi(a)b^*)h_\eta^{\frac1n}-\eta(b\psi(a)b^*)}\\
			&\qquad+\norm{\eta(b\psi(a)b^*) - (\eta\circ\phi)(a)}\\
			&< \eps + \norm\eta\norm{b\psi(a)b^*-\phi(a)}\\
			&< 2\eps.
	\end{align*}
	Hence $\eta\circ\phi\precsim_G\eta\circ\psi$.
\end{proof}

Observe that the $C_0((0,1])_+$-functional calculus of \cite{wz} extends to the equivariant case. Indeed, $f(\phi) := f(h_\phi)\pi_\phi$ is an equivariant c.p.~map for every equivariant c.p.c.~order zero map $\phi$ and $f\in C_0((0,1])_+$.

\begin{proposition}\label{prop:commcomp} Let $A$ and $B$ be \Cs-algebras, and let $\phi:A\to B$ be an equivariant c.p.c.\ order zero map. For any pair of positive continuous functions $f,g\in C_0((0,1])_+$ such that $\supp f\subseteq\supp g$ we have that $f(\phi)\precsim_G g(\phi)$.
\end{proposition}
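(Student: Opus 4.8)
The plan is to argue exactly as in Lemma~\ref{lem:reflexivity} and Proposition~\ref{prop:compositions}: use Theorem~\ref{thm:equiwz} to pass to the abelian \Cs-algebra $C^*(h_\phi)$, carry out the comparison there, and then patch in a $G$-invariant approximate unit to deal with the fact that $\precsim_G$ is tested on all of $A$. After rescaling — neither $\supp f\subseteq\supp g$ nor the relation $\precsim_G$ is affected by multiplying $f$ or $g$ by a positive constant — we may assume $\|f\|,\|g\|\le1$, so that $f(\phi)=f(h_\phi)\pi_\phi$ and $g(\phi)=g(h_\phi)\pi_\phi$ are equivariant c.p.c.\ order zero maps. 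From Theorem~\ref{thm:equiwz}, $h_\phi$ is $G$-invariant and commutes with $\pi_\phi(A)$ and with $C_\phi$; hence the same is true of every element of $C^*(h_\phi)\subseteq\M(C_\phi)$, and all such elements are $G$-invariant.

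The first step is the purely commutative input: since $\supp f\subseteq\supp g$, for every $\eps>0$ there is $d\in C_0((0,1])_+$ with $\|d^2g-f\|_\infty<\eps$, and $d$ may moreover be taken to vanish on a neighbourhood of $0$ — concretely, choose $\delta>0$ with $\{f\ge\eps/2\}\subseteq\{g\ge\delta\}$ and truncate $t\mapsto f(t)/(g(t)+\delta)$ near $0$; this is just Cuntz subequivalence inside $C_0((0,1])$. Because $d$ vanishes near $0$ we may factor $d(t)=t\,r(t)$ with $r\in C_0((0,1])_+$, so that $d(h_\phi)=h_\phi\,r(h_\phi)$ in $C^*(h_\phi)$.

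Now, given a finite set $F\Subset A$ and $\eps>0$, I would take $d$ (hence $r$) as above, choose a contractive $e\in A^G$ with $\|eae^*-a\|<\eps$ for all $a\in F$ (available since $G$ is compact), and set $b:=d(h_\phi)\pi_\phi(e)=r(h_\phi)\phi(e)$. The second expression is the crucial point: it exhibits $b$ as the product of $\phi(e)\in C_\phi$ with the multiplier $r(h_\phi)$, so $b\in C_\phi\subseteq B$, whereas $d(h_\phi)$ by itself need not lie in $B$; and $b\in B^G$, since $r(h_\phi)$ and $\phi(e)$ are both $G$-invariant. A short computation, using that $d(h_\phi)$ and $g(h_\phi)$ commute with each other and with $\pi_\phi(A)$, gives $b\,g(\phi)(a)\,b^*=(d^2g)(h_\phi)\,\pi_\phi(eae^*)$, and therefore
$$\|b\,g(\phi)(a)\,b^*-f(\phi)(a)\|\le\|d^2g-f\|_\infty\,\|a\|+\|f\|_\infty\,\|eae^*-a\|<\eps(\|a\|+1)$$
for $a\in F$. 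By the localised description of $\precsim_G$ recorded after the definition of equivariant Cuntz subequivalence, this yields $f(\phi)\precsim_G g(\phi)$. The argument is routine once Theorem~\ref{thm:equiwz} is in hand; the one subtlety is precisely the one flagged above, namely forcing $b$ to land in $B$ rather than merely in $\M(C_\phi)$, which is why $d$ is arranged to vanish near $0$ so that the factor $h_\phi$ can be absorbed into $\phi(e)\in B$, exactly as in the proof of Lemma~\ref{lem:reflexivity}. (Alternatively one could deduce the statement from Proposition~\ref{prop:compositions} by writing $f(\phi)$ and $g(\phi)$ as the $*$-homomorphism $\rho_\phi\colon C_0((0,1])\otimes A\to B$ associated with $\phi$ precomposed with the order zero maps $a\mapsto f\otimes a$ and $a\mapsto g\otimes a$, but this still rests on the same commutative approximation.)
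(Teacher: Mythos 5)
Your argument is correct and follows essentially the same route as the paper: the commutative Cuntz comparison $\|d^2g-f\|_\infty<\eps$ in $C_0((0,1])$ (the paper cites \cite[Proposition 2.5]{apt2009} for the analogous $k$ with $\|gk-f\|$ small), transported through the decomposition $\phi=h_\phi\pi_\phi$ of Theorem \ref{thm:equiwz} and combined with the $G$-invariant approximate-unit trick of Lemma \ref{lem:reflexivity} to produce the witness in $B^G$. Your version is in fact slightly more careful than the paper's, since you make explicit (via the factorisation $d(t)=t\,r(t)$, so that $b=r(h_\phi)\phi(e)\in C_\phi$) why the witnessing element lands in $B$ rather than merely in $\M(C_\phi)$ — a point the paper leaves implicit in the phrase ``by an argument similar to that of the proof of Lemma \ref{lem:reflexivity}''.
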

\begin{proof} Fix a finite subset $F$ of $A$ and an $\eps > 0$. For a given pair of positive continuous functions $f,g\in C_0((0,1])_+$ such that $\supp f\subseteq\supp g$, find $k\in C_0((0,1])_+$ with the property that $\norm{gk-f}<\frac\eps M$, where $M:=\max_{a\in F}\norm a$, e.g.~like in the proof of \cite[Proposition 2.5]{apt2009}. By an argument similar to that of the proof of Lemma \ref{lem:reflexivity} we can find $e\in B^G$ such that
	$$\norm{eg(\phi)(a)e^* - (gk)(\phi)(a)} < \eps$$
for any $a\in F$. We then have the estimate
	\begin{align*}
		\norm{eg(\phi)(a)e^* - f(\phi)(a)} &\leq
				\norm{eg(\phi)(a)e^* - (gk)(\phi)(a)}
				+ \norm{(gk)(\phi)(a) - f(\phi)(a)}\\
			&<\eps + \frac{\eps\norm a}M\\
			&\leq 2\eps,
	\end{align*}
	for any $a\in F$.
\end{proof}


\subsection{Stability} We recall that for every $*$-isomorphism $\gamma:K\otimes K\to K$ and minimal projection $e\in K$ there exists an isometry $v\in B(\ell^2(\IN))$ such that $\Ad_v\circ\gamma\circ(\id_K\otimes e) = \id_K$. By Fell's absorption
principle this fact generalises to the equivariant setting. First of all we observe that, inside the algebra $K_G$ there is a
minimal $G$-invariant projection $e_G$. This is given by $e\otimes e_0$, where
$e$ is any minimal projection of $K$ and $e_0$ is the minimal projection in
$K(L^2(G))$ that projects onto the representation space of the trivial
representation of $G$. Furthermore, the flip $a\otimes
e_G\mapsto e_G\otimes a$ is implemented by a $G$-invariant unitary.

\begin{proposition}\label{prop:gammaG} The $G$-algebras $K_G\otimes K_G$ and $K_G$ are equivariantly isomorphic.~Furthermore, for every equivariant isomorphism
$\gamma_G:K_G\otimes K_G \to K_G$ there exists a $G$-invariant isometry $v\in B(L^2(G)\otimes\ell^2(\IN))^G$ with the property that $\Ad_v\circ\gamma_G\circ(\id_{K_G}\otimes e_G) = \id_{K_G}$.
\end{proposition}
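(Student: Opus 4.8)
The statement has two parts, and I would prove them in sequence. For the first part, the claim is that $K_G \otimes K_G \cong K_G$ equivariantly. Recall $K_G = K(L^2(G)) \otimes K$, so $K_G \otimes K_G \cong K(L^2(G)) \otimes K(L^2(G)) \otimes K \otimes K$. The non-equivariant identification $K \otimes K \cong K$ is standard, and the reordering of tensor factors is harmless; the only subtlety is the $K(L^2(G)) \otimes K(L^2(G))$ factor, which carries the tensor-square $\lambda_G \otimes \lambda_G$ of the left-regular representation. The key input here is \emph{Fell's absorption principle}: for any (finite-dimensional or, more generally, any) representation $\pi$ of $G$, the tensor product $\lambda_G \otimes \pi$ is unitarily equivalent to $\lambda_G \otimes 1$ (a multiple of the left-regular representation). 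Applying this with $\pi = \lambda_G$ shows $\lambda_G \otimes \lambda_G \cong \lambda_G \otimes 1_{L^2(G)}$, and since $L^2(G) \otimes \ell^2(\mathbb N) \cong L^2(G)$ with the regular representation absorbing the trivial $\ell^2(\mathbb N)$-amplification (again Peter–Weyl / stabilisation of $H_G$ as in the excerpt), one obtains an equivariant isomorphism $K(L^2(G)) \otimes K(L^2(G)) \cong K(L^2(G))$ intertwining $\lambda_G \otimes \lambda_G$ with $\lambda_G$. Combining with $K \otimes K \cong K$ and reordering gives the desired equivariant isomorphism $\gamma_G : K_G \otimes K_G \to K_G$.

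For the second part, I would mimic the non-equivariant statement recalled just before the proposition: for a $*$-isomorphism $\gamma : K \otimes K \to K$ and a minimal projection $e$, there is an isometry $v$ with $\Ad_v \circ \gamma \circ (\id_K \otimes e) = \id_K$. Here $e_G = e \otimes e_0 \in K_G$ is the distinguished minimal $G$-invariant projection (as observed in the excerpt, $e_0$ projects onto the trivial representation inside $K(L^2(G))$). The map $\id_{K_G} \otimes e_G : K_G \to K_G \otimes K_G$ followed by $\gamma_G$ is an equivariant $*$-homomorphism from $K_G$ into $K_G$ that is injective (hence a $*$-isomorphism onto its image), and its image is a hereditary $G$-invariant subalgebra of $K_G$ of the form $p K_G p$ where $p := \gamma_G(1_{K_G} \otimes e_G)$ is a $G$-invariant projection (note $1_{K_G} \otimes e_G$ is $G$-invariant since $e_G$ is, and $\gamma_G$ is equivariant). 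Since $e_G$ is minimal and $G$-invariant, $p$ is Murray–von Neumann equivalent — \emph{via a $G$-invariant partial isometry} — to $1_{K_G}$ inside the stable, "infinite-dimensional-in-every-isotypic-component" algebra $K_G$; this is exactly the content one extracts from the equivariant Murray–von Neumann picture of the excerpt, noting that the multiplicity of every irreducible $\xi \in \hat G$ in $p K_G$ and in $K_G$ is infinite (Peter–Weyl: $L^2(G)$ contains each $\xi$ with multiplicity $\dim V_\xi$, and after stabilisation by $\ell^2(\mathbb N)$ each appears with infinite multiplicity). Dilating the $G$-invariant partial isometry witnessing $p \simeq_G 1_{K_G}$ to a $G$-invariant isometry $v \in B(L^2(G) \otimes \ell^2(\mathbb N))^G$ with $v^* v = 1$ and $v v^* = p$, one checks that $\Ad_{v^*}$ carries $p K_G p$ onto $K_G$ and undoes the corner embedding, giving $\Ad_v \circ \gamma_G \circ (\id_{K_G} \otimes e_G) = \id_{K_G}$ after possibly replacing $v$ by $v^*$ to match the variance; the precise bookkeeping of which of $v, v^*$ appears is routine.

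The main obstacle — and the place where compactness and second countability of $G$ really enter — is establishing that the $G$-invariant partial isometry between $p$ and $1_{K_G}$ can be chosen \emph{adjointably on the Hilbert space $L^2(G) \otimes \ell^2(\mathbb N)$ itself}, i.e.\ that the Murray–von Neumann equivalence $p \simeq_G 1_{K_G}$ of projections in $B(L^2(G) \otimes \ell^2(\mathbb N))$ is implemented by a genuine $G$-invariant isometry rather than just an abstract equivalence in $\mathcal M(K_G)$. This is where one must invoke the equivariant stability / absorption machinery carefully: decompose $L^2(G) \otimes \ell^2(\mathbb N)$ into isotypic components $\bigoplus_{\xi} V_\xi \otimes \mathcal H_\xi$ with each multiplicity space $\mathcal H_\xi$ infinite-dimensional and separable, observe that $p$ and $1$ act as $1_{V_\xi} \otimes (\text{projection of infinite rank and infinite corank})$ and $1_{V_\xi} \otimes 1$ respectively on each component, and in each component build the isometry on the multiplicity space by the usual $\ell^2(\mathbb N) \cong \ell^2(\mathbb N) \oplus \ell^2(\mathbb N)$ argument; assembling these componentwise yields the required $G$-invariant isometry. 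Everything else is a translation of the non-equivariant argument recalled before the statement, with "projection" replaced by "$G$-invariant projection" and "isometry" by "$G$-invariant isometry" throughout.
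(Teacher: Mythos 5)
Your first half (Fell's absorption to turn $\lambda_G\otimes\lambda_G$ into $\lambda_G\otimes 1$, then Peter--Weyl and stabilisation) is exactly the paper's route. For the second half you diverge: the paper uses the Fell unitary once more, chosen so that it \emph{fixes} the corner $\id_{K_G}\otimes e_G$, thereby reducing everything to the non-equivariant statement for an isomorphism $\gamma:(K,\id)\otimes(K_G,\id)\to(K,\id)$ with trivial actions, and then simply sets $\gamma_G=\id_{K(L^2(G))}\otimes\gamma$ and $v=1_{B(L^2(G))}\otimes w$. You instead work intrinsically with the $G$-invariant projection $p=\gamma_G(1_{K_G}\otimes e_G)$ and produce a $G$-invariant partial isometry witnessing $p\simeq_G 1$ by an isotypic-decomposition argument. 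Your route buys something real: it applies to an \emph{arbitrary} equivariant isomorphism $\gamma_G$, which is what the statement literally quantifies over, whereas the paper's proof only exhibits one $\gamma_G$ of product form together with its isometry. The price is that your argument is longer and rests on a multiplicity computation that the paper's tensor-factorisation sidesteps.

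One step needs tightening. You justify that each isotypic component of $pH$ (where $H=L^2(G)\otimes\ell^2(\IN)$) has infinite multiplicity by invoking Peter--Weyl and stabilisation, but that reasoning applies to $H$ itself, not to the subspace $pH$ --- a priori $p$ could cut down some isotypic components to finite or zero multiplicity. The correct justification is: $\gamma_G\circ(\id_{K_G}\otimes e_G)$ is an equivariant isomorphism $K_G\to pK_Gp=K(pH)$; any $*$-isomorphism between elementary \Cs-algebras on separable Hilbert spaces is spatial, and equivariance forces the implementing unitary to intertwine the representation on $H$ with that on $pH$ up to a character $c:G\to\mathbb T$. Hence $pH\cong \bar c\otimes(\lambda_G\otimes 1)\cong\lambda_G\otimes 1$ as representations (the regular representation absorbs character twists), which is what gives infinite multiplicity in every isotypic component and hence the $G$-invariant unitary $pH\to H$ you need. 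With that inserted, and the routine $v$ versus $v^*$ bookkeeping you already flag, the argument is complete.
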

\begin{proof} By Fell's absorption principle, the $G$-algebras
	$$(K_G\otimes K_G, (\id_K\otimes\lambda_G)\otimes(\id_K\otimes\lambda_G))\quad\text{and}\quad (K_G\otimes K_G, (\id_K\otimes\lambda_G)\otimes\id_{K_G})$$ are equivariantly isomorphic by a map $\phi$
that is such that $\phi\circ(\id_{K_G}\otimes e_G) = \id_{K_G}\otimes e_G$.
Since for every fixed isomorphism $\gamma:(K,\id_K)\otimes(K_G,\id_{K_G})\to (K,\id_K)$ there exists an isometry $w\in B(\ell^2(\IN))$ with the
property that $\Ad_w\circ\gamma\circ(\id_K\otimes e_G) = \id_K$, the sought map is then given by
$\gamma_G:=\id_{K(L^2(G))}\otimes\gamma$, with the $G$-invariant isometry $v$ given
by $1_{B(L^2(G))}\otimes w$.
\end{proof}

\begin{lemma}\label{lem:tensorcse} Let $A$, $B$, $C$ and $D$ be $G$-algebras, and let $\phi,\psi:A\to B$, $\eta:C\to D$ be equivariant c.p.c.~order zero maps
such that $\phi\precsim_G\psi$. Then $\eta\otimes\phi\precsim_G\eta\otimes\psi$ for any tensor norm.
\end{lemma}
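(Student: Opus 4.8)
The plan is to reduce the statement to the definition of $\precsim_G$ by tensoring the approximating sequence. First I would take a finite subset $F \Subset C \odot A$ (algebraic tensor product) and $\eps > 0$; since elements of $C \otimes A$ of the form $c \otimes a$ span a dense subspace, and since $\eta \otimes \phi$ and $\eta \otimes \psi$ are norm-continuous, it suffices to verify the localised inequality on elements of the form $c \otimes a$ with $c$, $a$ drawn from finite subsets $F_C \Subset C$, $F_A \Subset A$. By hypothesis $\phi \precsim_G \psi$, so by the localised characterisation recorded after the definition there is $b \in B^G$ with $\norm{b\,\psi(a)\,b^* - \phi(a)} < \eps / (M+1)$ for all $a \in F_A$, where $M := \max_{c \in F_C}\norm{\eta(c)} \le \norm\eta \max_{c\in F_C}\norm{c}$.

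Next I would set $d := 1 \otimes b$, viewed inside $M(D \otimes B)$ or, better, choose $d = e_D \otimes b$ where $e_D$ comes from a $G$-invariant approximate unit of $D$ absorbing $\eta(F_C)$, exactly as in the proof of Lemma \ref{lem:reflexivity} and Proposition \ref{prop:compositions}; this keeps $d$ an honest element of $(D \otimes B)^G$ (invariance is immediate since $b$ is $G$-invariant and $e_D$ can be chosen $G$-invariant). For a generator $c \otimes a$ one then computes, using $(\eta\otimes\psi)(c\otimes a) = \eta(c)\otimes\psi(a)$ and multiplicativity of the tensor norm on elementary tensors,
\begin{align*}
\norm{d\,(\eta\otimes\psi)(c\otimes a)\,d^* - (\eta\otimes\phi)(c\otimes a)}
  &= \norm{(e_D\eta(c)e_D)\otimes(b\,\psi(a)\,b^*) - \eta(c)\otimes\phi(a)}\\
  &\le \norm{(e_D\eta(c)e_D - \eta(c))\otimes b\,\psi(a)\,b^*}\\
  &\qquad + \norm{\eta(c)\otimes(b\,\psi(a)\,b^* - \phi(a))}\\
  &< \eps + M\cdot\frac{\eps}{M+1} < 2\eps,
\end{align*}
where the first term is controlled by the choice of $e_D$ and $\norm{b\,\psi(a)\,b^*}\le 1$ (as $\psi$ is a contraction and $b$ may be taken contractive, or one absorbs the factor into $\eps$). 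Since $F$, $\eps$ were arbitrary, this establishes $\eta\otimes\phi \precsim_G \eta\otimes\psi$.

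The main obstacle is bookkeeping rather than anything deep: one must make sure the element $d$ is genuinely in $(D\otimes B)^G$ and not merely in the multiplier algebra, which is why the $e_D$ trick (a $G$-invariant approximate unit of $D$, available by averaging since $G$ is compact) is needed; and one must check that the passage from generators to a general finite subset $F \Subset C \otimes A$ is legitimate, i.e. that a single $d$ works simultaneously for all the finitely many generators appearing in a chosen $\eps$-approximation of each element of $F$ — this is fine since only finitely many $a$'s and $c$'s are involved, so $F_A$ and $F_C$ can be taken finite. The independence of the argument from the choice of tensor norm follows because we only ever use the norm estimate $\norm{x\otimes y} \le \norm x\,\norm y$ and the identity $\norm{x \otimes y} = \norm x \norm y$ on elementary tensors, both of which hold for every $C^*$-tensor norm.
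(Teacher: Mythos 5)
Your proof is correct and follows essentially the same route as the paper, which simply takes the witnessing sequence $\seq b\subset B^G$ for $\phi\precsim_G\psi$ and a $G$-invariant approximate unit $\seq d\subset D^G$ and checks that $\{d_n\otimes b_n\}$ witnesses $\eta\otimes\phi\precsim_G\eta\otimes\psi$; your $e_D\otimes b$ is the localised form of exactly this. The only detail worth tightening is the remark that $b$ ``may be taken contractive'' (it need not be), but your fallback of bounding $\norm{b\psi(a)b^*}$ by $\norm{\phi(a)}+\eps$ over the finite set handles this.
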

\begin{proof} If $\seq b\subset B^G$ is the sequence that witnesses $\phi\precsim_G\psi$, then $\{d_n\otimes b_n\}_{n\in\IN}$,
where $\seq d\subset D^G$ is a $G$-invariant approximate unit for $D$, witnesses the sought equivariant Cuntz
subequivalence between $\eta\otimes\phi$ and $\eta\otimes\psi$.
\end{proof}

\begin{corollary}\label{cor:stabilitycse} Let $A$ and $B$ be $G$-algebras
and let $\phi,\psi:A\to B$ be equivariant c.p.c.~order zero maps. Then $\phi\precsim_G\psi$ in
$B$ if and only if $\phi\otimes\id_{K_G}\precsim_G\psi\otimes\id_{K_G}$ in
$B\otimes K_G$.
\end{corollary}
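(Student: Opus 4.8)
The plan is to prove the two implications separately. The ``only if'' direction will follow at once from Lemma \ref{lem:tensorcse}, while the ``if'' direction carries the content: the strategy there is to compress everything down to the corner cut out by the minimal $G$-invariant projection $e_G\in K_G$ exhibited just before Proposition \ref{prop:gammaG}, which in the equivariant picture plays the role a rank-one projection plays in the ordinary stabilisation argument.

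For the forward direction I would assume $\phi\precsim_G\psi$ in $B$ and apply Lemma \ref{lem:tensorcse} with $\eta:=\id_{K_G}$ (a $*$-homomorphism, hence an equivariant c.p.c.~order zero map), obtaining $\id_{K_G}\otimes\phi\precsim_G\id_{K_G}\otimes\psi$ as maps $K_G\otimes A\to K_G\otimes B$. Since the flip isomorphisms $A\otimes K_G\cong K_G\otimes A$ and $B\otimes K_G\cong K_G\otimes B$ are $G$-equivariant for the respective diagonal actions, and since composing a pair of maps on the left and on the right with equivariant $*$-isomorphisms visibly preserves $\precsim_G$ (one just transports the witnessing sequence along), this yields $\phi\otimes\id_{K_G}\precsim_G\psi\otimes\id_{K_G}$ in $B\otimes K_G$. (Alternatively, the proof of Lemma \ref{lem:tensorcse} applies mutatis mutandis with the two tensor factors interchanged.)

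For the converse, suppose $\Phi:=\phi\otimes\id_{K_G}\precsim_G\Psi:=\psi\otimes\id_{K_G}$ in $B\otimes K_G$, witnessed by $\seq b\subset(B\otimes K_G)^G$, so $\norm{b_n\Psi(x)b_n^*-\Phi(x)}\to0$ for every $x\in A\otimes K_G$. First I would precompose with $\iota:A\to A\otimes K_G$, $a\mapsto a\otimes e_G$; since $e_G$ is a $G$-invariant projection, $\iota$ is an equivariant $*$-homomorphism, and precomposition with such a map preserves $\precsim_G$ (restrict the defining condition to the subset $\iota(A)$), so $\norm{b_n(\psi(a)\otimes e_G)b_n^*-(\phi(a)\otimes e_G)}\to0$ for all $a\in A$. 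Then I would cut this relation by $q:=1\otimes e_G\in\M(B\otimes K_G)$ on both sides: as $e_G$ is $G$-invariant, $q$ is $G$-invariant, so $qb_nq\in(B\otimes K_G)^G$; and as $e_G$ is a \emph{minimal} projection, $e_GK_Ge_G=\CC e_G$, whence $q(B\otimes K_G)q=B\otimes\CC e_G$ and one may write $qb_nq=c_n\otimes e_G$ with $c_n\in B^G$. Using $\psi(a)\otimes e_G=q(\psi(a)\otimes e_G)q$ and the same for $\phi$, the compressed relation reads $c_n\psi(a)c_n^*\otimes e_G\to\phi(a)\otimes e_G$, i.e.~$\norm{c_n\psi(a)c_n^*-\phi(a)}\to0$ for all $a\in A$; since $\seq c\subset B^G$, this is exactly $\phi\precsim_G\psi$ in $B$.

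I do not expect a serious obstacle. The one point to keep straight is the dual role of $e_G$: its $G$-invariance is what keeps the compression $b_n\mapsto qb_nq$ inside the fixed-point algebra, and its minimality is what makes the compressed corner a copy of $B$ itself rather than a larger hereditary subalgebra of $B\otimes K_G$; both are guaranteed by the construction $e_G=e\otimes e_0$ recalled before Proposition \ref{prop:gammaG}. Everything else is the routine observation that precomposition with equivariant $*$-homomorphisms, and sandwiching between equivariant $*$-isomorphisms, preserve $\precsim_G$.
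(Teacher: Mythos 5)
Your proposal is correct and follows essentially the same route as the paper: the forward direction is Lemma \ref{lem:tensorcse} (the paper leaves the flip implicit, which you spell out), and the converse is exactly the paper's argument of compressing the witnessing sequence by $1_{\tilde B}\otimes e_G$ and using minimality and $G$-invariance of $e_G$ to identify the corner with $B^G\otimes\{e_G\}$ and pull the relation back along $\iota$. No substantive differences.
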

\begin{proof} The implication
$\phi\precsim_G\psi\Rightarrow\phi\otimes\id_{K_G}\precsim_G\psi\otimes\id_{K_G}$
follows from the previous lemma. For the other implication observe that $B$
embeds into $B\otimes K_G$ by means of the injective map $b\stackrel\iota\mapsto
b\otimes e_G$. If $\seq b\subset
(B\otimes K_G)^G$ is the sequence that witnesses the relation
$\phi\otimes\id_{K_G}\precsim_G\psi\otimes\id_{K_G}$ then, with $x_n:=(1_{\tilde B}\otimes
e_G)b_n(1_{\tilde B}\otimes e_G)\in B\otimes \{e_G\}$, where $1_{\tilde B}$ is either the unit
of $B$ or that of its minimal unitisation $\tilde B$, we have
	$$\norm{x_n^*(\psi(a)\otimes e_G)x_n-\phi(a)\otimes e_G}\to 0,\qquad\forall a\in\
	A$$
which can be pulled back to $B$ through $\iota$ to give
	$$\norm{\iota^{-1}(x_n)^*\psi(a)\iota^{-1}(x_n)-\phi(a)}\to 0,\qquad\forall
	a\in A.$$
Since $x_n\in(B\otimes\{e_G\})^G=B^G\otimes\{e_G\}$, it follows that $\iota^{-1}(x_n)\in B^G$, for any $n\in\IN$, whence $\phi\precsim_G\psi$.
\end{proof}

Thanks to the above proposition and the map $\gamma_G$ of Proposition
\ref{prop:gammaG}, the stability of $\Cu^G$ holds in the rather general form of
the following result.

\begin{theorem}\label{thm:equistability} For any pair of $G$-algebras $A$ and $B$, the partially ordered Abelian monoids $\Cu^G(A,B)$ and $\Cu^G(A\otimes K_G,B\otimes K_G)$ are order isomorphic.
\end{theorem}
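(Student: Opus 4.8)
The natural candidate for the isomorphism is the map $\Xi\colon\Cu^G(A,B)\to\Cu^G(A\otimes K_G,B\otimes K_G)$ sending $[\phi]$ to $[\phi\otimes\id_{K_G}]$, where, for an equivariant c.p.c.\ order zero map $\phi\colon A\otimes K_G\to B\otimes K_G$, the map $\phi\otimes\id_{K_G}$ is the equivariant c.p.c.\ order zero map $(A\otimes K_G)\otimes K_G\to(B\otimes K_G)\otimes K_G$ obtained by tensoring with the identity. (That the minimal tensor product of an equivariant c.p.c.\ order zero map with an equivariant $*$-homomorphism is again equivariant c.p.c.\ order zero is immediate from Theorem \ref{thm:equiwz}: if $\phi=h_\phi\pi_\phi$ then $\phi\otimes\id_{K_G}=(h_\phi\otimes 1)(\pi_\phi\otimes\id_{K_G})$, with $h_\phi\otimes 1$ a $G$-invariant positive contraction commuting with the image of the equivariant $*$-homomorphism $\pi_\phi\otimes\id_{K_G}$.) First I would record that $\Xi$ is well defined, injective, and an order embedding: this is precisely Corollary \ref{cor:stabilitycse}, read with $A\otimes K_G$ and $B\otimes K_G$ in place of $A$ and $B$, which gives $\phi\precsim_G\psi$ in $B\otimes K_G$ if and only if $\phi\otimes\id_{K_G}\precsim_G\psi\otimes\id_{K_G}$ in $(B\otimes K_G)\otimes K_G$. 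Additivity of $\Xi$ is routine, since $(\phi\hoplus\psi)\otimes\id_{K_G}$ and $(\phi\otimes\id_{K_G})\hoplus(\psi\otimes\id_{K_G})$ coincide under the canonical equivariant identification $M_2(B\otimes K_G)\otimes K_G\cong M_2((B\otimes K_G)\otimes K_G)$.

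The substance of the proof is the surjectivity of $\Xi$. Given an equivariant c.p.c.\ order zero map $\Psi\colon(A\otimes K_G)\otimes K_G\to(B\otimes K_G)\otimes K_G$, I would fix the equivariant $*$-isomorphism $\gamma_G\colon K_G\otimes K_G\to K_G$ and the $G$-invariant isometry $v$ of Proposition \ref{prop:gammaG}. Folding the two copies of $K_G$ together via $\gamma_G$ produces equivariant $*$-isomorphisms $\Gamma_A\colon(A\otimes K_G)\otimes K_G\to A\otimes K_G$ and $\Gamma_B\colon(B\otimes K_G)\otimes K_G\to B\otimes K_G$, and I would set $\phi:=\Gamma_B\circ\Psi\circ\Gamma_A^{-1}\colon A\otimes K_G\to B\otimes K_G$, again an equivariant c.p.c.\ order zero map, hence representing a class in $\Cu^G(A,B)$. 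It remains to show $\Xi([\phi])=[\Psi]$, i.e.\ $\phi\otimes\id_{K_G}\sim_G\Psi$. Since conjugation by equivariant $*$-isomorphisms preserves $\sim_G$, and $\Psi=\Gamma_B^{-1}\circ\phi\circ\Gamma_A$ by construction, this is equivalent to the single relation $\Gamma_B\circ(\phi\otimes\id_{K_G})\circ\Gamma_A^{-1}\sim_G\phi$. Here the defining property $\Ad_v\circ\gamma_G\circ(\id_{K_G}\otimes e_G)=\id_{K_G}$ of $v$ is exactly what is needed: it lets one match up the copy of $K_G$ that $-\otimes\id_{K_G}$ adjoins with the one folded away by $\Gamma_A$ and $\Gamma_B$, at the cost of conjugating $\phi$ by a $G$-invariant isometry built from $v$ (namely $\pi_\phi(1\otimes v)\in\M(C_\phi)^G$). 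A conjugation of a c.p.c.\ order zero map by such an element can be absorbed into a $G$-invariant approximate unit of $B\otimes K_G$ (which exists by the averaging argument recalled after the definition of $\precsim_G$), and this produces the required $G$-invariant witnessing sequence for $\sim_G$.

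Granting that equivalence, $\Xi$ is a bijective order embedding of partially ordered Abelian monoids with inverse $[\Psi]\mapsto[\phi]$, hence an order isomorphism, and the theorem follows. The hard part is exactly the equivalence $\phi\otimes\id_{K_G}\sim_G\Psi$: conceptually it just says that $\Psi$ ``is a stabilisation'', but its proof requires careful bookkeeping of the several copies of $K_G$ and, at every step, the verification that each intertwiner, unitary, and approximate unit invoked can be chosen $G$-invariant, which is what Proposition \ref{prop:gammaG} (and, behind it, Fell's absorption principle) guarantees. An alternative, slightly more economical route bypasses $\Xi$ altogether and works directly with $\Gamma_A$ and $\Gamma_B$: the assignment $[\Psi]\mapsto[\Gamma_B\circ\Psi\circ\Gamma_A^{-1}]$ is a bijection $\Cu^G(A\otimes K_G,B\otimes K_G)\to\Cu^G(A,B)$ which is its own two-sided inverse up to the identity conjugation, is order-preserving and order-reflecting, and whose only delicate point is compatibility with $\hoplus$, handled through the equivariant identification $M_2(K_G)\cong K_G$ and the $G$-invariant flip on $K_G\otimes K_G$.
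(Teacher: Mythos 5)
Your proposal is correct and follows essentially the same route as the paper: the forward map $[\phi]\mapsto[\phi\otimes\id_{K_G}]$ is identical, and your inverse $[\Psi]\mapsto[\Gamma_B\circ\Psi\circ\Gamma_A^{-1}]$ differs from the paper's $[\Psi]\mapsto[(\id_B\otimes\gamma_G)\circ\Psi\circ(\id_{A\otimes K_G}\otimes e_G)]$ only by precomposition with $(\id_A\otimes\gamma_G)\circ(\id_{A\otimes K_G}\otimes e_G)=\Ad_{1\otimes v^*}$, which is $\sim_G$-equivalent to $\id_{A\otimes K_G}$, so the two inverses agree at the level of classes. The key equivalence $\phi\otimes\id_{K_G}\sim_G\Psi$ that you leave as a sketch is precisely the chain of $\sim_G$-manipulations (using the $G$-invariant flip, the isometry $v$, Proposition \ref{prop:compositions} and Lemma \ref{lem:tensorcse}) that constitutes the body of the paper's proof.
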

\begin{proof} Mutual inverses are given by the pair of semigroup homomorphisms
	$$[\phi]\mapsto[\phi\otimes\id_{K_G}],\qquad[\phi]\in \Cu^G(A,B)$$
and
	$$[\Phi]\mapsto[(\id_B\otimes\gamma_G)\circ\Phi\circ(\id_{A\otimes K_G}\otimes e_G)],$$
where $e_G$ is the minimal $G$-invariant projection in $K_G$ mentioned at the beginning of this section. Indeed, by making use of
Proposition \ref{prop:compositions} and Lemma \ref{lem:tensorcse} one has
	\begin{align*}
		(\id_B\otimes\gamma_G)\circ(\phi\otimes\id_{K_G})\circ(\id_{A\otimes K_G}\otimes e_G) &=
		(\id_B\otimes\gamma_G)\circ(\phi\otimes e_G)\\
			&= (\id_B\otimes\gamma_G)\circ(\id_B\otimes\id_{K_G}\otimes
			e_G)\circ\phi\\
			&\sim_G (\id_B\otimes\id_{K_G})\circ\phi\\
			&= \phi
	\end{align*}
	and
	\begin{align*}
		((\id_B\otimes\gamma_G)\circ\Phi\circ(\id_{A\otimes K_G}&\otimes
		e_G))\otimes\id_{K_G}=\\
			&=
			(\id_B\otimes\gamma_G\otimes\id_{K_G}) \circ
			(\Phi\otimes\id_{K_G})\circ(\id_{A\otimes K_G}\otimes
			e_G\otimes\id_{K_G})\\
			&\sim_G (\id_B\otimes\gamma_G\otimes\id_{K_G}) \circ
			(\Phi\otimes\id_{K_G})\circ(\id_{A\otimes K_G}\otimes\id_{K_G}\otimes e_G)\\
			&= (\id_B\otimes\gamma_G\otimes\id_{K_G})\circ(\Phi\otimes e_G)\\
			&= (\id_B\otimes\gamma_G\otimes\id_{K_G}) \circ
			(\id_B\otimes\id_{K_G}\otimes\id_{K_G}\otimes e_G)\circ\Phi\\
			&\sim_G (\id_B\otimes\gamma_G\otimes\id_{K_G}) \circ
			(\id_B\otimes\id_{K_G}\otimes e_G\otimes\id_{K_G})\circ\Phi\\
			&\sim_G (\id_B\otimes\id_{K_G}\otimes\id_{K_G})\circ\Phi\\
			&= \Phi,
	\end{align*}
	which become equalities at the level of equivariant Cuntz classes.
\end{proof}

\begin{corollary}\label{cor:special} Let $A$ and $B$ be $G$-algebras. For every $\Phi\in\Cu^G(A,B)$ there exists an equivariant c.p.c.~order zero map $\phi:A\to B\otimes K_G$ such that $\Phi = [(\id_B\otimes\gamma_G)\circ(\phi\otimes\id_{K_G})]$.
\end{corollary}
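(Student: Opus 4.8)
The plan is to \emph{de-stabilise the domain}: represent $\Phi$ by a map $A\otimes K_G\to B\otimes K_G$, compress it to the corner cut out by the minimal $G$-invariant projection $e_G\in K_G$, and then verify that re-expanding this compression by $\gamma_G$ recovers the class $\Phi$. This is the counterpart, for the first variable, of the argument carried out for both variables in the proof of Theorem \ref{thm:equistability}.

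First I would fix an equivariant c.p.c.~order zero map $\Psi\colon A\otimes K_G\to B\otimes K_G$ with $[\Psi]=\Phi$ and set $\phi:=\Psi\circ(\id_A\otimes e_G)\colon A\to B\otimes K_G$; since $e_G$ is $G$-invariant the map $a\mapsto a\otimes e_G$ is an equivariant $*$-homomorphism $A\to A\otimes K_G$, so $\phi$ is again equivariant c.p.c.~order zero. It then remains to prove that $(\id_B\otimes\gamma_G)\circ(\phi\otimes\id_{K_G})\sim_G\Psi$. Unwinding the definitions,
	$$(\id_B\otimes\gamma_G)\circ(\phi\otimes\id_{K_G})=(\id_B\otimes\gamma_G)\circ(\Psi\otimes\id_{K_G})\circ\big((\id_A\otimes e_G)\otimes\id_{K_G}\big),$$
and, because the flip on $K_G\otimes K_G$ is implemented by a $G$-invariant unitary (cf.\ the remarks preceding Proposition \ref{prop:gammaG}), the rightmost map may be rewritten so that Proposition \ref{prop:gammaG} applies and identifies $\gamma_G\circ(\id_{K_G}\otimes e_G)$ (and, after commuting the flip outward, $\gamma_G\circ(e_G\otimes\id_{K_G})$) with $\id_{K_G}$ up to conjugation by a $G$-invariant isometry. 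Propagating this through the composition by means of Proposition \ref{prop:compositions} and Lemma \ref{lem:tensorcse}, just as in the proof of Theorem \ref{thm:equistability}, would yield one of the two required subequivalences; the companion identity $\big((\id_B\otimes\gamma_G)\circ(\phi\otimes\id_{K_G})\big)\circ(\id_A\otimes e_G)\sim_G\phi$ follows likewise, directly from Proposition \ref{prop:gammaG}.

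The genuine content, and the step I expect to be the main obstacle, is the reverse subequivalence $\Psi\precsim_G(\id_B\otimes\gamma_G)\circ(\phi\otimes\id_{K_G})$. For this I would invoke $\overline{K_Ge_GK_G}=K_G$ together with the structure theorem (Theorem \ref{thm:equiwz}): every value $\Psi(a\otimes k)$ is a norm-limit of finite sums of the form $\overline{\pi_\Psi}(1_{\tilde A}\otimes s)\,\phi(a)\,\overline{\pi_\Psi}(1_{\tilde A}\otimes t)$ with $s,t\in K_G$, where $\overline{\pi_\Psi}$ is the multiplier extension of the support $*$-homomorphism of $\Psi$; the ``universal'' spreading of $\phi$ effected by $\gamma_G$---which acts on a full copy of $K(L^2(G))\otimes K$---accommodates every such internal spreading, so conjugating $(\id_B\otimes\gamma_G)\circ(\phi\otimes\id_{K_G})$ by an appropriate $G$-invariant element of $B\otimes K_G$ should produce the desired approximation on each finite subset of $A\otimes K_G$. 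The delicate point is that $K_G$ carries no $G$-invariant system of matrix units---only the corner $e_G$ and the whole algebra are $G$-invariant---so the conjugating elements must be extracted with care from the $G$-fixed-point structure of $K_G$ and of $\M(C_\Psi)$, which is exactly why the assertion does not follow formally from Theorem \ref{thm:equistability}, where both variables are stabilised at once. Once both subequivalences are established, passing to classes gives $\Phi=[(\id_B\otimes\gamma_G)\circ(\phi\otimes\id_{K_G})]$, as required.
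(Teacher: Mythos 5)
Your de-stabilisation map $\phi:=\Psi\circ(\id_A\otimes e_G)$ is exactly the one the paper uses, and your first factorisation is correct; but the proof is incomplete precisely where you flag it. The subequivalence $\Psi\precsim_G(\id_B\otimes\gamma_G)\circ(\phi\otimes\id_{K_G})$ is only sketched: the approximants $\sum_i\overline{\pi_\Psi}(1\otimes s_i)\,\phi(a)\,\overline{\pi_\Psi}(1\otimes t_i)$ are finite sums rather than single conjugates $b\,(\ \cdot\ )\,b^*$, and the elements $1\otimes s_i$ with $s_i\in K_G$ are not $G$-invariant, so they cannot serve as witnesses for $\precsim_G$ as defined; you acknowledge this (``the delicate point'') but do not resolve it. As written, that direction is a gap, not a proof.

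The underlying premise --- that this direction does not follow formally from the machinery of Theorem \ref{thm:equistability} --- is also mistaken; the paper's proof is literally ``repeat the previous argument with $\id_{A\otimes K_G}$ replaced by $\id_A$'', and that repetition yields the \emph{two-sided} equivalence in one chain. Concretely, write
	$$(\id_B\otimes\gamma_G)\circ(\phi\otimes\id_{K_G}) = (\id_B\otimes\gamma_G)\circ(\Psi\otimes\id_{K_G})\circ(\id_A\otimes e_G\otimes\id_{K_G}).$$
The embedding $a\otimes k\mapsto a\otimes e_G\otimes k$ is conjugate to $a\otimes k\mapsto a\otimes k\otimes e_G$ by the $G$-invariant flip unitary, hence the two are $\sim_G$ --- in both directions, because the witness is a $G$-invariant unitary --- and Proposition \ref{prop:compositions} propagates this equivalence through the outer composition. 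Next, $(\Psi\otimes\id_{K_G})\circ(\id_{A\otimes K_G}\otimes e_G)=(\id_{B\otimes K_G}\otimes e_G)\circ\Psi$ is an exact identity of maps (both send $x\mapsto\Psi(x)\otimes e_G$); this is the step that makes the density of $K_Ge_GK_G$ in $K_G$, and any appeal to $\M(C_\Psi)$, irrelevant. Finally $\id_B\otimes(\gamma_G\circ(\id_{K_G}\otimes e_G))\sim_G\id_{B\otimes K_G}$ by Proposition \ref{prop:gammaG}, again two-sidedly: if $\Ad_v\circ\theta=\id$ with $v$ a $G$-invariant isometry, then $\id\precsim_G\theta$ is witnessed by $v$ and $\theta=\Ad_{v^*}\circ\id\precsim_G\id$ by $v^*$ (cut down by a $G$-invariant approximate unit). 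Composing with $\Psi$ on the right via Proposition \ref{prop:compositions} gives $(\id_B\otimes\gamma_G)\circ(\phi\otimes\id_{K_G})\sim_G\Psi$ outright, with no separate argument needed for the ``reverse'' direction.
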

\begin{proof} Define a semigroup $\cat{cu}^G(A,B)$ of Cuntz-equivalence classes of equivariant c.p.c.~order zero maps from $A$ to $B\otimes K_G$, equipped with the same binary operation of $\Cu^G(A,B)$ and repeat the argument of the previous proof to show that they are isomorphic. This amounts to replacing $\id_{A\otimes K_G}$ with $\id_A$.
\end{proof}

The following example shows that Definition \ref{def:equi_bcu} gives an
equivariant extension of the bivariant Cuntz semigroup of Definition \ref{def:bcs}.

\begin{example} Let $G$ be the trivial group $\{e\}$. Then $K_G = \CC\otimes K \cong K$ with the trivial action and therefore $\Cu^G(A,B)\cong\Cu(A,B)$.
\end{example}

The example that follows shows that Definition \ref{def:equi_bcu} can be
regarded as an bivariant extension of the equivariant Cuntz semigroup defined in \cite{gs}.

\begin{example}\label{ex:equics} Let $(A,G,\alpha)$ and $(B,G,\beta)$ be $G$-algebras. Theorem
\ref{thm:equistability} implies that, for every class $\Phi\in\Cu^G(A,B)$, there
exists a representative of the form\footnote{Here we are tacitly dropping the map $\id_B\otimes\gamma_G$ of Corollary \ref{cor:special} in order to ease the notation.} $\phi\otimes\id_{K_G}$, where $\phi:A\to
B\otimes K_G$ is an equivariant c.p.c.~order zero map. When $A=\CC$ with the
trivial action of $G$ then
	$$\phi(z) = zh_\phi,\qquad \forall z\in \CC,$$
where $h_\phi$ is a $G$-invariant positive element in $B\otimes K_G$ by Theorem \ref{thm:equiwz}. Hence, $\Cu^G(\CC,B)$ can be identified with
Cuntz-equivalence classes of $G$-invariant positive elements from $B\otimes
K_G$, i.e.
	$$\Cu^G(\CC, B)\cong\Cu^G(B).$$

More generally, if $G$ acts trivially on $A$, then any equivariant c.p.c.~order zero map $\phi:A\to B\otimes K_G$ maps into the fixed point algebra $(A\otimes K_G)^G\cong (A\rtimes G)\otimes K$, so that one sees that there is a natural isomorphism
	$$\Cu^G(A,B)\cong \Cu(A,B\rtimes G).$$
\end{example}


\subsection{Functoriality\label{sec:functoriality}}

We now investigate the functoriality of the equivariant bivariant Cuntz semigroup $\Cu^G(\ \cdot\ ,\ \cdot\ )$. The following two theorems show that $\Cu^G(\ \cdot\ ,\ \cdot\ )$ defines a bifunctor from the category of $G$-algebras to that of positively ordered Abelian monoids, contravariant in the first and covariant in the second.

\begin{theorem} Let $B$ be a $G$-algebra. $\Cu^G(\ \cdot\ ,B)$ is a
contravariant functor from the category of $G$-algebras to that of ordered
Abelian monoids.
\end{theorem}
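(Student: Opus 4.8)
The plan is to show that a homomorphism $\pi\colon A_1 \to A_2$ of $G$-algebras induces a morphism $\pi^*\colon \Cu^G(A_2, B) \to \Cu^G(A_1, B)$ of positively ordered Abelian monoids, and that this assignment respects identities and composition, thereby reversing arrows. First I would fix an equivariant $\ast$-homomorphism $\pi\colon A_1 \to A_2$. Since $\pi$ is equivariant, so is $\pi \otimes \id_{K_G}\colon A_1 \otimes K_G \to A_2 \otimes K_G$, and for any equivariant c.p.c.\ order zero map $\phi\colon A_2 \otimes K_G \to B \otimes K_G$ the composite $\phi \circ (\pi \otimes \id_{K_G})$ is again an equivariant c.p.c.\ order zero map from $A_1 \otimes K_G$ to $B \otimes K_G$ (composition of a c.p.c.\ order zero map with a $\ast$-homomorphism is c.p.c.\ order zero, and equivariance is preserved since both factors intertwine the actions). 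So I would define $\pi^*[\phi] := [\phi \circ (\pi \otimes \id_{K_G})]$.

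Next I would check this is well defined and order-preserving. If $\phi \precsim_G \psi$ with witnessing $G$-invariant sequence $\{b_n\} \subset (B\otimes K_G)^G$, then for each $a \in A_1 \otimes K_G$ one has $\norm{b_n \psi((\pi\otimes\id_{K_G})(a)) b_n^* - \phi((\pi\otimes\id_{K_G})(a))} \to 0$, since $(\pi\otimes\id_{K_G})(a)$ is just a particular element of $A_2 \otimes K_G$; hence $\phi \circ (\pi\otimes\id_{K_G}) \precsim_G \psi \circ (\pi\otimes\id_{K_G})$ with the \emph{same} sequence $\{b_n\}$. This simultaneously gives well-definedness on $\sim_G$-classes and monotonicity for $\leq_G$. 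For additivity, I would observe that $(\phi \hoplus \psi) \circ (\pi\otimes\id_{K_G}) = (\phi\circ(\pi\otimes\id_{K_G})) \hoplus (\psi\circ(\pi\otimes\id_{K_G}))$, which follows directly from the definition $\phi\hoplus\psi = (\phi\oplus\psi)\circ\Delta$ together with the fact that $\Delta_{A_1\otimes K_G}$ composed after $\pi\otimes\id_{K_G}$ equals $(\pi\otimes\id_{K_G} \oplus \pi\otimes\id_{K_G})$ composed after $\Delta_{A_2\otimes K_G}$; thus $\pi^*([\phi]+[\psi]) = \pi^*[\phi] + \pi^*[\psi]$, and $\pi^*$ is a morphism of positively ordered Abelian monoids.

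Finally I would verify functoriality: $\id_A^* = \id_{\Cu^G(A,B)}$ is immediate since $\phi \circ \id_{A\otimes K_G} = \phi$, and for equivariant $\ast$-homomorphisms $\pi\colon A_1 \to A_2$ and $\rho\colon A_2 \to A_3$ one has $(\rho\circ\pi)^*[\phi] = [\phi \circ ((\rho\circ\pi)\otimes\id_{K_G})] = [\phi\circ(\rho\otimes\id_{K_G})\circ(\pi\otimes\id_{K_G})] = \pi^*(\rho^*[\phi])$, using $(\rho\circ\pi)\otimes\id_{K_G} = (\rho\otimes\id_{K_G})\circ(\pi\otimes\id_{K_G})$. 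This gives $(\rho\circ\pi)^* = \pi^* \circ \rho^*$, confirming contravariance. I do not expect a serious obstacle here: the argument is entirely formal once one notes that precomposition by $\pi\otimes\id_{K_G}$ preserves all the relevant structure, and the only point requiring mild care is confirming that the very same witnessing sequence transports the subequivalence relation — which it does, since $\precsim_G$ quantifies over all elements of the source but the witnesses live in the target, which is unchanged.
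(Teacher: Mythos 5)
Your proposal is correct and follows exactly the paper's approach: define the pull-back $\pi^*[\phi] = [\phi\circ(\pi\otimes\id_{K_G})]$ and observe that precomposition preserves equivariance, the order zero property, Cuntz subequivalence (with the same witnessing sequence, since the witnesses live in the unchanged target), the monoid operation, and composition of arrows. You in fact spell out the well-definedness, monotonicity, additivity, and functoriality checks that the paper dismisses as ``easy to check.''
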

\begin{proof} Let $A_1,A_2$ be any $G$-algebras, and consider a
homomorphism $f:A_1\to A_2$ and an equivariant c.p.c.~order zero map
$\psi:A_2\otimes K_G\to B\otimes K_G$. By defining $f^*(\psi)$ as
	$$f^*(\psi) := \psi\circ (f\otimes\id_{K_G}),$$
the following diagram
	$$\xymatrix@R=0.5em@C=3em{%
		A_1\otimes K_G  \ar[dr]^-{f^*(\psi)}\ar[dd]_{f} &             \\
		                                    & B\otimes K_G \\
		A_2\otimes K_G \ar[ur]_-{\psi}                 & 
	}$$
commutes. Hence $f^*(\psi)$ is an equivariant c.p.c.~order zero map and therefore $f^*$ defines a pull-back which can be projected onto equivalence classes by setting
	$$\Cu^G(f,B)([\psi]) = [f^*(\psi)],\qquad\forall [\psi]\in \Cu^G(A_2,\A
	B).$$
It is easy to check that this yields a well-defined map.
\end{proof}

\begin{theorem} Let $A$ be a $G$-algebra. $\Cu^G(A,\ \cdot\ )$ is a covariant
functor from the category of $G$-algebras to that of ordered Abelian
monoids.
\end{theorem}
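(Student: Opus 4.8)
The plan is to mimic the contravariant case: given an equivariant $*$-homomorphism $g:B_1\to B_2$ between $G$-algebras and an equivariant c.p.c.\ order zero map $\phi:A\otimes K_G\to B_1\otimes K_G$, I would push forward by post-composition, setting $g_*(\phi) := (g\otimes\id_{K_G})\circ\phi$. First I would check that $g_*(\phi)$ is again an equivariant c.p.c.\ order zero map: it is completely positive and contractive as a composition of such, it has the order zero property because $*$-homomorphisms preserve orthogonality of positive elements (if $ab=0$ then $\phi(a)\phi(b)=0$, hence $(g\otimes\id)(\phi(a))(g\otimes\id)(\phi(b))=0$), and it is equivariant since both $\phi$ and $g\otimes\id_{K_G}$ intertwine the diagonal actions. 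The corresponding commuting diagram
	$$\xymatrix@R=0.5em@C=3em{
		                                    & B_1\otimes K_G \ar[dd]^{g} \\
		A\otimes K_G  \ar[ur]^-{\phi}\ar[dr]_-{g_*(\phi)} &             \\
		                                    & B_2\otimes K_G
	}$$
makes this transparent.

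Next I would descend to equivalence classes by defining $\Cu^G(A,g)([\phi]) := [g_*(\phi)]$ for $[\phi]\in\Cu^G(A,B_1)$, and verify this is well-defined, i.e.\ that $\phi\precsim_G\psi$ implies $g_*(\phi)\precsim_G g_*(\psi)$. Given a $G$-invariant sequence $\seq b\subset (B_1\otimes K_G)^G$ witnessing $\phi\precsim_G\psi$, I would apply $g\otimes\id_{K_G}$ to the estimate $\norm{b_n\psi(a)b_n^*-\phi(a)}\to 0$; since $g\otimes\id_{K_G}$ is norm-contractive and equivariant, the elements $(g\otimes\id_{K_G})(b_n)\in (B_2\otimes K_G)^G$ form a $G$-invariant sequence with $\norm{(g\otimes\id)(b_n)\,g_*(\psi)(a)\,(g\otimes\id)(b_n)^* - g_*(\phi)(a)}\to 0$, which is exactly $g_*(\phi)\precsim_G g_*(\psi)$. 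This simultaneously shows $\Cu^G(A,g)$ is order-preserving. That it is a monoid homomorphism follows because post-composition commutes with the diagonal map $\Delta$ and hence with $\hoplus$, so $g_*(\phi\hoplus\psi) = g_*(\phi)\hoplus g_*(\psi)$.

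Finally I would check the two functoriality axioms. For the identity, $(\id_B)_*(\phi) = (\id_B\otimes\id_{K_G})\circ\phi = \phi$, so $\Cu^G(A,\id_B) = \id_{\Cu^G(A,B)}$. For composition, given $B_1\xrightarrow{g}B_2\xrightarrow{h}B_3$, associativity of composition of maps gives $(h\circ g)_*(\phi) = ((h\circ g)\otimes\id_{K_G})\circ\phi = (h\otimes\id_{K_G})\circ(g\otimes\id_{K_G})\circ\phi = h_*(g_*(\phi))$, hence $\Cu^G(A,h\circ g) = \Cu^G(A,h)\circ\Cu^G(A,g)$. None of the steps presents a real obstacle; the only point requiring a moment's care is the observation that a $*$-homomorphism preserves the order zero property and that tensoring with $\id_{K_G}$ preserves equivariance and norm, both of which are routine.
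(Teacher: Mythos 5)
Your proposal is correct and follows essentially the same route as the paper: define the push-forward $g_*(\phi) = (g\otimes\id_{K_G})\circ\phi$, observe it is an equivariant c.p.c.\ order zero map, and check well-definedness on classes using that the equivariant $*$-homomorphism $g\otimes\id_{K_G}$ carries $G$-invariant witnessing sequences to $G$-invariant witnessing sequences. Your write-up merely spells out the routine verifications (order zero preservation, order preservation, additivity, and the two functoriality axioms) that the paper leaves implicit.
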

\begin{proof} Let $B_1$ and $B_2$ be any $G$-algebras, $g:B_1\to B_2$ an equivariant $*$-homomorphism and $\psi:A\otimes K_G\to B_1\otimes K_G$ a c.p.c.~order zero map. Define $g_*(\psi)$ as
	$$g_*(\psi) := (g\otimes\id_{K_G})\circ\psi.$$
so that the diagram
	$$\xymatrix@R=0.5em@C=3em{%
		     & B_1\otimes K_G\ar[dd]^{g} \\
		A\otimes K_G \ar[ru]^-{\psi}\ar[rd]_-{g_*(\psi)} & \\
		     & B_2\otimes K_G
	}$$
commutes. Such map is clearly equivariant c.p.c.~order zero and defines a push-forward between c.p.c.~order zero
maps that gives rise to the well-defined semigroup homomorphism
	$$\Cu^G(A, g)([\psi]) = [g_*(\psi)],\qquad\forall[\psi]\in \Cu^G(A,B),$$
since $g(A^G)\subset B^G$.
\end{proof}


\begin{remark} The last two theorems still hold if one considers equivariant c.p.c.~order zero maps instead of equivariant $*$-homomorphisms as arrows between $G$-algebras.
\end{remark}


\subsection{Additivity}

Let $A_1$, $A_2$ and $B$ be $G$-algebras. We shall say that two equivariant c.p.c.\ order zero maps $\phi : A_1 \to B$ and $\psi : A_2 \to B$ are orthogonal, and we shall indicate this by $\phi\perp\psi$, if $\phi(A_1)\psi(A_2) = \{0\}$. This implies, in particular, that the positive elements $h_\phi,h_\psi\in B\dd$ coming from Theorem \ref{thm:equiwz} applied to $\phi$ and $\psi$ respectively are orthogonal, i.e. $h_\phi h_\psi = 0$ in $B\dd$. 

\begin{proposition} Let $A_1$, $A_2$ and $B$ be $G$-algebras and let $\phi_1:A_1\to B$ and $\phi_2:A_2 \to B$ be c.p.c.~order zero maps such that $\phi_1\perp\phi_2$. Then $\phi_1(A_1)\cap\phi_2(A_2) = \{0\}$.
\end{proposition}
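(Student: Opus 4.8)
The plan is to pick an arbitrary element $b\in\phi_1(A_1)\cap\phi_2(A_2)$ and prove that $b^*b = 0$; the conclusion $b = 0$ then follows at once from the C*-identity $\norm b^2 = \norm{b^*b}$. First I would choose $a_1\in A_1$ and $a_2\in A_2$ with $b = \phi_1(a_1)$ and $b = \phi_2(a_2)$. Since a completely positive map between C*-algebras, being in particular positive, sends self-adjoint elements to self-adjoint elements and is therefore $*$-preserving, one has $b^* = \phi_1(a_1)^* = \phi_1(a_1^*)$ with $a_1^*\in A_1$. Consequently
$$b^*b = \phi_1(a_1^*)\,\phi_2(a_2)\in\phi_1(A_1)\,\phi_2(A_2) = \{0\},$$
where the last equality is precisely the orthogonality hypothesis $\phi_1\perp\phi_2$. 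Hence $b^*b = 0$, and so $b = 0$.

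An equivalent route, in the spirit of the remark preceding the statement, is to factor through the structure theorem (Theorem \ref{thm:equiwz}): writing $\phi_i(a) = h_{\phi_i}\pi_{\phi_i}(a)$ in $B\dd$, the orthogonality of the images forces $h_{\phi_1}h_{\phi_2} = 0$, and the same computation gives
$$b^*b = h_{\phi_1}\pi_{\phi_1}(a_1^*)\,h_{\phi_2}\pi_{\phi_2}(a_2) = \pi_{\phi_1}(a_1^*)\,h_{\phi_1}h_{\phi_2}\,\pi_{\phi_2}(a_2) = 0,$$
where the middle equality uses that $\pi_{\phi_1}$ takes values in $\{h_{\phi_1}\}'$. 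Either way the argument closes in two lines.

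I do not anticipate any real obstacle: the proof is routine manipulation. The only points to keep in mind are that c.p.\ maps are $*$-preserving, so that $b^*$ again lies in $\phi_1(A_1)$ itself (not merely in its closure or in $C_{\phi_1}$), and that ``orthogonal'' refers to the literal images $\phi_i(A_i)$, so that the product $\phi_1(a_1^*)\phi_2(a_2)$ is directly covered by the hypothesis without passing to generated subalgebras.
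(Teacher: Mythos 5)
Your first argument is exactly the paper's proof: write $b=\phi_1(a_1)=\phi_2(a_2)$, use that a positive map is $*$-preserving to get $b^*b=\phi_1(a_1^*)\phi_2(a_2)=0$ by orthogonality, and conclude $b=0$ via the C*-identity. The proposal is correct and essentially identical to the paper's argument (the second route through the structure theorem is a valid but unnecessary variant).
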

\begin{proof} Assume that there are $a_1\in A_1, a_2\in A_2$ such that $b=\phi_1(a_1) = \phi_2(a_2)$. Then
	\begin{align*}
		\norm b^2 &= \norm{b^*b}\\
			&= \norm{\phi_1(a_1)^*\phi_2(a_2)}\\
			&= \norm{\phi_1(a_1^*)\phi_2(a_2)}\\
			&= 0
	\end{align*}
	by orthogonality of $\phi_1$ and $\phi_2$. Hence, $b=0$.
\end{proof}

Let $A_1$ and $A_2$ be $G$-algebras. We observe that, given two equivariant c.p.c.\ order zero maps $\phi_1:A_1\to B$ and $\phi_2:A_2\to B$, their direct sum
$\phi_1\oplus\phi_2$ is easily seen to be an equivariant c.p.c.\ order zero map from $A_1\oplus A_2$ to $M_2(B)$, where the action on $A_1\oplus A_2$ is $\alpha_1\oplus\alpha_2$ and that on $M_2(B)\cong M_2\otimes B$ is $\id_2\otimes\beta$. For the
converse of this property we provide the following results.

\begin{lemma} Let $A_1,A_2,B$ be $G$-algebras. A map
$\phi:A_1\oplus A_2\to B$ is an equivariant c.p.c.\ order zero if and only if there are equivariant c.p.c.\ order zero maps $\phi_1:A_1\to B$ and $\phi_2:A_2\to B$ such that
	\begin{enumerate}[i.]
		\item $\phi_1(a_1) + \phi_2(a_2) = \phi(a_1, a_2)$, for any $a_1\in A_1$ and $a_2\in A_2$;
		\item $\phi_1 \perp \phi_2$.
	\end{enumerate}
\end{lemma}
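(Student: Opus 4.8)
The plan is to prove the lemma by reducing it, via the cone correspondence and the structure theorem, to a statement about $*$-homomorphisms, and then to produce the two maps $\phi_1,\phi_2$ from the two canonical ideal projections of $A_1\oplus A_2$. The ``if'' direction is the easy one and I would dispose of it first: given equivariant c.p.c.\ order zero maps $\phi_i:A_i\to B$ with $\phi_1\perp\phi_2$, the map $(a_1,a_2)\mapsto \phi_1(a_1)+\phi_2(a_2)$ is completely positive and contractive because the images sit in orthogonal hereditary subalgebras of $B\dd$ (so the sum behaves like a direct sum inside $M_2(B\dd)$, hence has the same norm), it is equivariant since each $\phi_i$ is, and it has order zero: if $(a_1,a_2)(a_1',a_2')=0$ then $a_1a_1'=0$ and $a_2a_2'=0$, so $\phi(a_1,a_2)\phi(a_1',a_2')=\phi_1(a_1)\phi_1(a_1')+\phi_2(a_2)\phi_2(a_2')=0$ using order zero of each $\phi_i$ together with the cross-terms vanishing by orthogonality.

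For the ``only if'' direction, the natural construction is to set $\phi_1 := \phi\circ\iota_1$ and $\phi_2 := \phi\circ\iota_2$, where $\iota_i:A_i\to A_1\oplus A_2$ are the canonical (equivariant) inclusions as complementary ideals. Each $\phi_i$ is then manifestly an equivariant c.p.c.\ order zero map, and property (i) is immediate from $(a_1,a_2)=\iota_1(a_1)+\iota_2(a_2)$ together with additivity of $\phi$ — but additivity of $\phi$ on the two summands is exactly what needs justifying, and it is essentially the same point as orthogonality. I would obtain both (i) and (ii) at once from the structure theorem: apply Theorem \ref{thm:equiwz} to $\phi$ to get $h_\phi\in\M(C_\phi)_+^G\cap C_\phi'$ and the non-degenerate $*$-homomorphism $\pi_\phi:A_1\oplus A_2\to\M(C_\phi)\cap\{h_\phi\}'$ with $\phi(a)=h_\phi\pi_\phi(a)$. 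Since $\pi_\phi$ is a $*$-homomorphism out of a direct sum, $p:=\pi_\phi(1_{A_1},0)$ and $1-p=\pi_\phi(0,1_{A_2})$ (interpreted via the strict extension of $\pi_\phi$ to multipliers / an approximate unit argument, as in the proof of Theorem \ref{thm:equiwz}) are complementary central projections in $\M(C_\phi)$, and $\pi_\phi(\iota_1(a_1))=p\,\pi_\phi(\iota_1(a_1))$ is orthogonal to $\pi_\phi(\iota_2(a_2))=(1-p)\pi_\phi(\iota_2(a_2))$. Multiplying by $h_\phi$ gives $\phi_1(a_1)\phi_2(a_2)^* = h_\phi\pi_\phi(\iota_1(a_1))\pi_\phi(\iota_2(a_2))^*h_\phi = 0$, and similarly $\phi_2(a_2)^*\phi_1(a_1)=0$, so $\phi_1(A_1)\phi_2(A_2)=\{0\}$, which is (ii); additivity (i) then follows because $\phi(a_1,a_2)=h_\phi\pi_\phi(\iota_1(a_1)+\iota_2(a_2))=h_\phi\pi_\phi(\iota_1(a_1))+h_\phi\pi_\phi(\iota_2(a_2))=\phi_1(a_1)+\phi_2(a_2)$.

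Equivariance of $\phi_1,\phi_2$ is inherited from that of $\phi$ and the fact that $\iota_1,\iota_2$ intertwine the actions (the decomposition $A_1\oplus A_2$ carries the action $\alpha_1\oplus\alpha_2$, so each summand is $G$-invariant); alternatively one reads it off from $\beta_g\dd\circ\pi_\phi=\pi_\phi\circ(\alpha_1\oplus\alpha_2)_g$ and the $G$-invariance of $h_\phi$. I expect the main obstacle to be the bookkeeping around multipliers: strictly speaking $(1_{A_1},0)\notin A_1\oplus A_2$ unless the $A_i$ are unital, so the projection $p$ lives in $\M(C_\phi)$ and one must argue via an approximate unit $\seq e\subset A_1^G$ (so that $\pi_\phi(\iota_1(e_n))$ converges strictly to a projection $p$, exactly as in the proof of Theorem \ref{thm:equiwz}) rather than by naive evaluation; once this is set up carefully the rest is a short computation. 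I would also remark that orthogonality of $\phi_1$ and $\phi_2$ forces $\phi_1(A_1)\cap\phi_2(A_2)=\{0\}$ by the preceding Proposition, so the decomposition in (i) is genuinely a direct-sum decomposition of the image.
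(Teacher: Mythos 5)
Your proof is correct, and the core decomposition is the same as the paper's: $\phi_i:=\phi\circ\iota_i$, with (i) coming from linearity and the converse handled by checking contractivity and the order zero property of $\phi_1+\phi_2$ exactly as you do (the paper also gets contractivity from $\norm{h_1+h_2}\leq 1$ with $h_1\perp h_2$). The one place you genuinely diverge is in deriving (ii): you route it through Theorem \ref{thm:equiwz}, extracting $h_\phi$, $\pi_\phi$ and the central projection $p=\pi_\phi(1_{A_1},0)$ in $\M(C_\phi)$, which then forces you into the multiplier/approximate-unit bookkeeping you flag as the ``main obstacle''. The paper avoids all of this: for positive $a_1,a_2$ the elements $(a_1,0)$ and $(0,a_2)$ are orthogonal positive elements of $A_1\oplus A_2$, so the order zero property of $\phi$ gives $\phi(a_1,0)\phi(0,a_2)=0$ directly, and linearity extends this to all of $\phi_1(A_1)\phi_2(A_2)$. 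Relatedly, your worry that ``additivity of $\phi$ on the two summands needs justifying'' is unfounded --- $\phi$ is a linear map, so $\phi(a_1,a_2)=\phi(a_1,0)+\phi(0,a_2)$ is immediate and independent of orthogonality. Your argument buys nothing extra here, so the structure-theorem detour is sound but unnecessary; the elementary route is both shorter and free of the unitisation issues you anticipate.
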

\begin{proof} Let $\phi:A_1\oplus A_2\to B$ be a c.p.c.~order zero map. Define the equivariant c.p.c.~order zero maps $\phi_1:A_1\to B$ and $\phi_2:A_2\to B$ as
		$$\phi_1(a_1) := \phi(a_1, 0)\qquad\text{and}\qquad\phi_2(a_2):=\phi(0, a_2)$$
	respectively, with $a_1\in A_1,a_2\in A_2$. Then clearly one has that $\phi(a_1, a_2) =
	\phi_1(a_1)+\phi_2(a_2)$. Furthermore, the order zero property implies that
		$$\phi_1(a_1)\phi_2(a_2) = \phi(a_1, 0)\phi(0, a_2) = 0,$$
	for any $a_1\in A_1,a_2\in A_2$, whence $\phi_1\perp\phi_2$.
	
	Conversely, assume that there are equivariant c.p.c.\ order zero maps $\phi_1$ and $\phi_2$ with the desired properties. Their sum $\phi_1 + \phi_2$ is clearly an equivariant c.p.~map. Contractivity follows from the orthogonality $\phi_1\perp\phi_2$. Let $h_1$ and $h_2$ be the positive elements coming from Theorem \ref{thm:equiwz} applied to $\phi_1$ and $\phi_2$ respectively. Then
		$$\norm\phi = \norm{h_1 + h_2}\leq 1,$$
	since $h_1\perp h_2$. The order zero property follows immediately from the fact that both $\phi_1$ and $\phi_2$ are assumed to be orthogonality preserving.
\end{proof}

\begin{lemma}\label{lem:cpcds} Let $A_1$, $A_2$ and $B$ be $G$-algebras, and let $\phi_1:A_1\to B$ and $\phi_2:A_2\to B$ be equivariant c.p.c.\ order zero maps such that $\phi_1\perp\phi_2$. Then
	$$\begin{bmatrix}\phi_1 + \phi_2 & 0\\0 & 0\end{bmatrix}\sim_G\begin{bmatrix}\phi_1 & 0\\0&\phi_2\end{bmatrix}$$
in $M_2(B)$, where $\phi_1+\phi_2$ is the equivariant c.p.c.\ order zero map from $A_1\oplus A_2$ to $B$ given by $(\phi_1+\phi_2)(a_1, a_2) := \phi_1(a_1)+\phi_2(a_2)$.
\end{lemma}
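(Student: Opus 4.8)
The plan is to exhibit an explicit $G$-invariant sequence (in fact a single $G$-invariant contraction suffices in the localised picture) witnessing the equivalence in both directions, mimicking the classical Cuntz-type computation but keeping track of $G$-invariance throughout. Write $h_1 := h_{\phi_1}$, $h_2 := h_{\phi_2}$, $\pi_1 := \pi_{\phi_1}$, $\pi_2 := \pi_{\phi_2}$ for the data supplied by Theorem \ref{thm:equiwz}, so that $\phi_i(a_i) = h_i\pi_i(a_i)$, the $h_i$ are $G$-invariant and mutually orthogonal (as recorded in the paragraph preceding the previous proposition, $h_1 h_2 = 0$ in $B\dd$), and each $\pi_i$ intertwines the bidual actions. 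The direction $\left[\begin{smallmatrix}\phi_1 & 0\\ 0 & \phi_2\end{smallmatrix}\right]\precsim_G\left[\begin{smallmatrix}\phi_1+\phi_2 & 0\\ 0 & 0\end{smallmatrix}\right]$ is the substantive one: here I would use, for a finite set $F\Subset A_1\oplus A_2$ and $\eps>0$, an approximately-orthogonal pair of partial-isometry-like elements built from functional calculus on $h_1$ and $h_2$. Concretely, choose $n$ large and set $s := \left[\begin{smallmatrix}h_1^{1/2n}\pi_1(\cdot) & 0\\ h_2^{1/2n}\pi_2(\cdot) & 0\end{smallmatrix}\right]$-type matrices; since $h_1,h_2$ are orthogonal and $G$-invariant, the element $b := \left[\begin{smallmatrix} g_n(h_1) & 0 \\ g_n(h_2) & 0\end{smallmatrix}\right] \in M_2(B)^G$ (with $g_n(t) = t^{1/2n}$ or a suitable truncation, applied in $M_2(C^*(\phi_1(A_1)+\phi_2(A_2)))$) conjugates $(\phi_1+\phi_2)\oplus 0$, evaluated on the image of the diagonal, approximately onto $\phi_1\oplus\phi_2$, because $h_1^{1/2n}(h_1+h_2)h_1^{1/2n}\to h_1$ and $h_1^{1/2n}(h_1+h_2)h_2^{1/2n} = h_1^{1/2n}h_2^{1/2n+1}\to 0$ by orthogonality. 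The reverse direction $\left[\begin{smallmatrix}\phi_1+\phi_2 & 0\\ 0 & 0\end{smallmatrix}\right]\precsim_G\left[\begin{smallmatrix}\phi_1 & 0\\ 0 & \phi_2\end{smallmatrix}\right]$ is handled symmetrically by the row element $b^* = \left[\begin{smallmatrix} g_n(h_1) & g_n(h_2) \\ 0 & 0\end{smallmatrix}\right]$, which carries $\phi_1\oplus\phi_2$ approximately onto $(\phi_1+\phi_2)\oplus 0$, again using $h_1^{1/2n}h_1 h_1^{1/2n} + h_2^{1/2n}h_2 h_2^{1/2n}\to h_1+h_2$ and vanishing of cross terms.

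In more detail, for the hard direction I would fix $F = \{(a_1^{(k)},a_2^{(k)})\}_k\Subset A_1\oplus A_2$. Using that $h_i^{1/n}h_i\to h_i$ in norm and that $\pi_i$ are $*$-homomorphisms, pick $n$ with $\|h_i^{1/n}\phi_i(a_i^{(k)})h_i^{1/n} - \phi_i(a_i^{(k)})\| < \eps$ for all $k$ and $i=1,2$. Set $b := \left[\begin{smallmatrix} h_1^{1/2n} & 0\\ h_2^{1/2n} & 0\end{smallmatrix}\right]$, an element of $M_2(B)^G$ since each $h_i^{1/2n}\in\M(C_{\phi_i})^G\subseteq B\dd$ and is in fact approximable in $M_2(B)^G$ (or one passes to a $G$-invariant approximate unit to land genuinely in $M_2(B)^G$, as in the proof of Lemma \ref{lem:reflexivity}). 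Then a direct $2\times 2$ matrix multiplication gives
\[
b\begin{bmatrix}(\phi_1+\phi_2)(a_1,a_2) & 0\\ 0 & 0\end{bmatrix}b^*
= \begin{bmatrix} h_1^{1/2n}(\phi_1(a_1)+\phi_2(a_2))h_1^{1/2n} & h_1^{1/2n}(\phi_1(a_1)+\phi_2(a_2))h_2^{1/2n}\\ h_2^{1/2n}(\phi_1(a_1)+\phi_2(a_2))h_1^{1/2n} & h_2^{1/2n}(\phi_1(a_1)+\phi_2(a_2))h_2^{1/2n}\end{bmatrix}.
\]
The $(1,1)$ entry equals $h_1^{1/2n}\phi_1(a_1)h_1^{1/2n} + h_1^{1/2n}\phi_2(a_2)h_1^{1/2n}$; the second summand is $h_1^{1/2n}h_2\pi_2(a_2)h_1^{1/2n} = 0$ since $h_1^{1/2n}h_2 = 0$, so it is within $\eps$ of $\phi_1(a_1)$. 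Similarly the $(2,2)$ entry is within $\eps$ of $\phi_2(a_2)$, and the off-diagonal entries vanish identically because every product mixes an $h_1$-power with an $h_2$-power. Hence $\|b\,[(\phi_1+\phi_2)\oplus 0](a)\,b^* - [\phi_1\oplus\phi_2](a)\| < \eps$ for all $a\in F$, which is exactly the localised form of $G$-subequivalence noted after the definition. The other direction is identical with $b$ replaced by $b^*$.

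The main obstacle is not the algebra — the matrix computation is routine — but making sure the conjugating elements genuinely lie in $(M_2(B))^G$ rather than merely in $M_2(B\dd)^G$: the $h_i^{1/2n}$ a priori live in the multiplier algebras $\M(C_{\phi_i})$. I expect to resolve this exactly as in Lemma \ref{lem:reflexivity}, by inserting a $G$-invariant approximate unit $\{e_m\}\subset (A_1\oplus A_2)^G$ and working with $\phi_i^{1/2n}(e_m) = h_i^{1/2n}\pi_i(e_m)\in B^G$, which for $m$ large approximates the action of $h_i^{1/2n}$ on the relevant finitely many elements $\phi_i(a_i^{(k)})$ to within a further $\eps$; then $b := \left[\begin{smallmatrix}\phi_1^{1/2n}(e_m) & 0\\ \phi_2^{1/2n}(e_m) & 0\end{smallmatrix}\right]\in M_2(B)^G$ and the estimates above acquire only an extra additive $O(\eps)$. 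Everything else — $G$-invariance of each $h_i^{1/2n}$, orthogonality $h_1 h_2 = 0$ hence $h_1^{s}h_2^{t}=0$ for $s,t>0$, and norm-continuity of $t\mapsto t^{1/2n}$ on the spectrum — is supplied by Theorem \ref{thm:equiwz} and the remarks preceding the statement.
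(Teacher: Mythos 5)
Your argument is correct and is essentially the paper's: both proofs conjugate by the column (resp.\ row) matrix whose entries are fractional powers of the $\phi_i$ evaluated on a $G$-invariant approximate unit, use the orthogonality $h_1h_2=0$ to annihilate the cross terms, and land the witnesses in $M_2(B)^G$ via the approximate-unit device of Lemma \ref{lem:reflexivity}. The only difference is cosmetic: the paper fixes the exponents at $\tfrac12$ and $\tfrac14$ and then closes the chain with $\phi^2\sim_G\phi^{1/2}\sim_G\phi$ from Proposition \ref{prop:commcomp}, whereas you let the exponent $\tfrac1{2n}$ tend to $0$ and obtain both subequivalences directly.
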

\begin{proof} By the previous lemma, the matrix on the left is a well-defined equivariant c.p.c.\ order zero map from $A_1\oplus A_2$ to $M_2(B)$. Fix a $G$-invariant approximate unit $\seq e$ for $A$, and introduce the $G$-invariant sequences of $M_2(B)$
	$$x_n:=\begin{bmatrix}
		\phi_1^{\frac12}(e_n) & 0\\
		\phi_2^{\frac12}(e_n) & 0
	\end{bmatrix},\quad
	y:=\begin{bmatrix}
		\phi_1^{\frac14}(e_n) & \phi_2^{\frac14}(e_n)\\
		0 & 0
	\end{bmatrix}.$$
	One easily sees from Theorem \ref{thm:equiwz} and the definition of functional calculus for equivariant c.p.c.~order zero maps that
		$$\lim_{n\to\infty}\norm{x_n\begin{bmatrix}\phi_1 + \phi_2 & 0\\0 & 0\end{bmatrix}(a)x_n^* - \begin{bmatrix}\phi_1^2 & 0\\0&\phi_2^2\end{bmatrix}(a)} = 0$$
	and
		$$\lim_{n\to\infty}\norm{y_n\begin{bmatrix}\phi_1^{\frac12} & 0\\0&\phi_2^{\frac12}\end{bmatrix}(a)y_n^* - \begin{bmatrix}\phi_1 + \phi_2 & 0\\0 & 0\end{bmatrix}(a)} = 0$$
	for any $a\in A_1\oplus A_2$, whence
		$$\begin{bmatrix}\phi_1^2 & 0\\0&\phi_2^2\end{bmatrix}\precsim_G\begin{bmatrix}\phi_1 + \phi_2 & 0\\0 & 0\end{bmatrix}\qquad\text{and}\qquad\begin{bmatrix}\phi_1 + \phi_2 & 0\\0 & 0\end{bmatrix}\precsim_G\begin{bmatrix}\phi_1^{\frac12} & 0\\0&\phi_2^{\frac12}\end{bmatrix}.$$
	 By Proposition \ref{prop:commcomp} we then have
		$$\begin{bmatrix}\phi_1^2 & 0\\0&\phi_2^2\end{bmatrix}\sim_G\begin{bmatrix}\phi_1^{\frac12} & 0\\0&\phi_2^{\frac12}\end{bmatrix}\sim_G\begin{bmatrix}\phi_1 & 0\\0&\phi_2\end{bmatrix},$$
	which concludes the proof.
\end{proof}

\begin{proposition}\label{prop:w1add} For any triple of $G$-algebras $A_1$,
$A_2$ and $B$, the partially ordered semigroup isomorphism
		$$\Cu^G(A_1\oplus A_2, B)\cong \Cu^G(A_1,B)\oplus \Cu^G(A_2, B)$$
	holds.
\end{proposition}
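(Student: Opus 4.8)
The plan is to construct mutually inverse semigroup homomorphisms between the two monoids, mimicking the non-equivariant proof in \cite{bcs}. First I would set up the obvious candidate maps. Given an equivariant c.p.c.~order zero map $\phi:(A_1\oplus A_2)\otimes K_G\to B\otimes K_G$, Lemma \ref{lem:cpcds} (applied after identifying $(A_1\oplus A_2)\otimes K_G\cong (A_1\otimes K_G)\oplus(A_2\otimes K_G)$) lets us split $\phi$ into orthogonal pieces $\phi_1:A_1\otimes K_G\to B\otimes K_G$ and $\phi_2:A_2\otimes K_G\to B\otimes K_G$ with $\phi(a_1,a_2)=\phi_1(a_1)+\phi_2(a_2)$; sending $[\phi]\mapsto([\phi_1],[\phi_2])$ gives a map $\Cu^G(A_1\oplus A_2,B)\to \Cu^G(A_1,B)\oplus \Cu^G(A_2,B)$. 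In the other direction, given $\phi_i:A_i\otimes K_G\to B\otimes K_G$, form $\phi_1\hoplus\phi_2:(A_1\oplus A_2)\otimes K_G\to M_2(B\otimes K_G)$ and, using the equivariant stability isomorphism of Theorem \ref{thm:equistability} together with the $G$-equivariant identification $M_2(B\otimes K_G)\cong M_2(B)\otimes K_G$ absorbed into $B\otimes K_G$, view this as a class in $\Cu^G(A_1\oplus A_2,B)$; this defines the backward map $([\phi_1],[\phi_2])\mapsto[\phi_1\hoplus\phi_2]$.

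Next I would verify that both maps are well defined on equivalence classes. For the forward map this requires checking that if $\phi\precsim_G\psi$ via a $G$-invariant sequence $\seq b\subset (B\otimes K_G)^G$, then the induced components satisfy $\phi_i\precsim_G\psi_i$; but since $\phi_i(a_i)=\phi(0,\dots,a_i,\dots,0)$ and similarly for $\psi_i$, the \emph{same} sequence $\seq b$ witnesses $\phi_i\precsim_G\psi_i$, so this is immediate. For the backward map, if $\phi_i\precsim_G\psi_i$ with witnessing $G$-invariant sequences $\{b_n^{(i)}\}$, then the block-diagonal sequence $\mathrm{diag}(b_n^{(1)},b_n^{(2)})\in M_2(B\otimes K_G)^G$ witnesses $\phi_1\oplus\phi_2\precsim_G\psi_1\oplus\psi_2$, and composing with $\Delta$ preserves this; additivity of both maps is then a routine check using that $\hoplus$ distributes appropriately.

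The two maps are mutually inverse: starting from $([\phi_1],[\phi_2])$, forming $\phi_1\hoplus\phi_2$ and then splitting into orthogonal components recovers (up to the orthogonality decomposition and a reshuffle of matrix corners) a pair equivariantly Cuntz-equivalent to $(\phi_1,\phi_2)$, essentially by Lemma \ref{lem:cpcds}; conversely, starting from $[\phi]$, splitting into $\phi_1,\phi_2$ and then reassembling $\phi_1\hoplus\phi_2$ returns a map equivariantly Cuntz-equivalent to $\phi$ — here the orthogonality $\phi_1\perp\phi_2$ and Lemma \ref{lem:cpcds} give $\bigl[\begin{smallmatrix}\phi_1+\phi_2&0\\0&0\end{smallmatrix}\bigr]\sim_G\bigl[\begin{smallmatrix}\phi_1&0\\0&\phi_2\end{smallmatrix}\bigr]$, and the left-hand side is $\phi$ sitting in the top corner, which is $\sim_G\phi$ after applying stability to absorb the $M_2$. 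Order-preservation in both directions follows from the observation that each constituent Cuntz subequivalence is witnessed by the appropriate ($G$-invariant, block-diagonal or shared) sequence, so the maps are in fact order isomorphisms.

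The main obstacle I anticipate is purely bookkeeping: one must be careful that all the auxiliary identifications ($M_2(B\otimes K_G)\cong B\otimes K_G$ via $\gamma_G$, and $(A_1\oplus A_2)\otimes K_G\cong (A_1\otimes K_G)\oplus (A_2\otimes K_G)$) are \emph{equivariant} and that the witnessing sequences produced at each stage genuinely lie in the fixed-point algebras $B^G$, $(B\otimes K_G)^G$, or $M_2(B\otimes K_G)^G$ as required — this is where the compactness of $G$ (hence existence of $G$-invariant approximate units, already used in Lemma \ref{lem:cpcds}) does the work. Once these identifications are in place, every individual step reduces to the corresponding non-equivariant statement in \cite{bcs} with ``$K$'' replaced by ``$K_G$'' and ``sequence in $B$'' replaced by ``$G$-invariant sequence in $B^G$''.
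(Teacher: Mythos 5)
Your proposal is correct and follows essentially the same route as the paper: both arguments rest on the splitting of an equivariant c.p.c.~order zero map on a direct sum into orthogonal components, on Lemma \ref{lem:cpcds} to pass between $\phi_1+\phi_2$ and $\mathrm{diag}(\phi_1,\phi_2)$, and on extracting component-wise $G$-invariant witnessing sequences (the paper compresses to the $(1,1)$-corner where you reuse the same sequence, but this is the same idea). The only cosmetic difference is that you exhibit two mutually inverse maps while the paper defines a single map $\sigma$ and proves it bijective.
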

\begin{proof} Let $\sigma : \Cu^G(A_1,B)\oplus \Cu^G(A_2,B)\to \Cu^G(A_1\oplus A_2,B)$ be
the map given by
		$$\sigma([\phi_1]\oplus[\phi_2]) = [\phi_1\oplus\phi_2].$$
	By the above two lemmas it is clear that this map is surjective. To
	prove injectivity we show that $\phi_1\oplus\phi_2\precsim_G\psi_1\oplus\psi_2$
	implies $\phi_k\precsim_G\psi_k$, $k=1,2$. By hypothesis there exists a sequence
	$\seq b\subset (B\otimes K_G)^G$ such that
		$$b_n^*(\psi_1(a_1)\oplus\psi_2(a_2))b_n\to\phi_1(a_1)\oplus\phi_2(a_2)$$
	in norm for every $a_1\in A_1,a_2\in A_2$. As $M_2(B\otimes K_G)\cong
	B\otimes K_G$ equivariantly, the sequence $b_n$ has the structure
		$$b_n = \sum_{i,j=1}^2b_{n,ij}\otimes e_{ij},$$
	where $b_{n,ij}\in (B\otimes K_G)^G$ for any $i,j=1,2$ and $\{e_{ij}\}_{i,j=1,2}$
	form the standard basis of matrix units for $M_2$. Thus, for $a_2 = 0$, one
	finds that
		$$\lim_{n\to\infty}\norm{b_{n,11}^*\psi_1(a_1)b_{n,11} - \phi_1(a_1)} = 0$$
	for any $a_1\in A_1$, i.e. $\phi_1\precsim_G\psi_1$. A similar argument with
	$a_1=0$ leads to the conclusion that $\phi_2\precsim_G\psi_2$ as well.
	To check that $\sigma$ preserves the semigroup operations it suffices to show
	that
		$$(\phi_1\hoplus\phi_2) \oplus (\psi_1\hoplus\psi_2) \sim_G
		(\phi_1\oplus\psi_1) \hoplus (\phi_2\oplus\psi_2).$$
	A direct computation reveals that such equivalence is witnessed by the
	sequence $\seq b\subset M_4((B\otimes K_G)^G)$ given by
		$$b_n:= u_n\otimes(e_{11} + e_{44} + e_{23} + e_{32}),$$
	where $\seq u\subset (B\otimes K_G)^G$ is an approximate unit for $B\otimes K_G$.
\end{proof}

As in the case of the bivariant Cuntz semigroup and of KK-theory, $\Cu^G(\ \cdot\ ,\ \cdot\ )$ is countably additive in the first argument. We now proceed to prove this result which, \emph{en passant}, provides an alternative proof for the finite additivity in the first argument.

\begin{lemma} Let $A$ and $B$ be $G$-algebras, $\phi:A\to B$ a countable sum of pair-wise orthogonal equivariant c.p.c.\ order zero maps, that is
	$$\lim_{n\to\infty}\norm{\phi(a) - \sum_{k=1}^n\phi_k(a)} = 0,\qquad\forall a\in A,$$
 where each $\phi_k$ is an equivariant c.p.c.\ order zero map and $\phi_k\perp\phi_i$ for any $i\neq k$ in $\IN$, and $\seq \lambda\subset\IR^+$ a sequence that sums up to 1. Then
	$$\phi \sim_G \sum_{k=1}^\infty \lambda_k\phi_k.$$
\end{lemma}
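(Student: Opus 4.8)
The plan is to show both $\phi\precsim_G\sum_k\lambda_k\phi_k$ and $\sum_k\lambda_k\phi_k\precsim_G\phi$ using the localised form of $\precsim_G$ and the fact that $G$ is compact, so that we may always find $G$-invariant witnesses. First I would observe that, by Lemma \ref{lem:cpcds} (applied inductively) together with Proposition \ref{prop:commcomp} and the functional calculus for equivariant c.p.c.\ order zero maps, one has, for each $n$, the finite equivalence
$$\sum_{k=1}^n\phi_k \sim_G \sum_{k=1}^n\lambda_k\phi_k$$
inside $B$; indeed the block-diagonal map $\mathrm{diag}(\phi_1,\dots,\phi_n)$ is Cuntz-equivalent to the orthogonal sum $\sum_{k\le n}\phi_k$ by the same square-root-of-approximate-unit trick as in the proof of Lemma \ref{lem:cpcds}, and rescaling each block by $\sqrt{\lambda_k}$ is harmless by Proposition \ref{prop:commcomp} since it does not change supports. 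The point of introducing the weights $\lambda_k$ is exactly to control the \emph{tail}: the partial sums $\sum_{k\le n}\lambda_k\phi_k$ converge in norm (on each $a$, and even uniformly on bounded sets, since $\|\phi_k(a)\|\le\|a\|$ and $\sum\lambda_k<\infty$), whereas $\sum_{k\le n}\phi_k$ need not.

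For the direction $\sum_k\lambda_k\phi_k\precsim_G\phi$: fix $F\Subset A$ and $\eps>0$. Choose $n$ so large that $\sum_{k>n}\lambda_k<\eps$, hence $\|\sum_{k>n}\lambda_k\phi_k(a)\|<\eps\|a\|$ for all $a\in F$. Using the finite equivalence above we find a $G$-invariant $c$ with $\|c(\sum_{k\le n}\phi_k)(a)c^*-\sum_{k\le n}\lambda_k\phi_k(a)\|$ small on $F$; and since $\sum_{k\le n}\phi_k$ is an orthogonal compression of $\phi$ (it equals $h_\phi^{?}$... more precisely, by Theorem \ref{thm:equiwz} applied to $\phi$ and to the $\phi_k$, the supports $h_{\phi_k}$ are mutually orthogonal subprojections of $h_\phi$, so $\sum_{k\le n}\phi_k\precsim_G\phi$ via a $G$-invariant witness built from the relevant spectral cut-offs of $h_\phi$), Proposition \ref{prop:compositions} lets us compose the two subequivalences into a single $G$-invariant witness. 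Combining with the tail estimate gives the claim on $F$, hence the subequivalence. The reverse direction $\phi\precsim_G\sum_k\lambda_k\phi_k$ is symmetric: $\phi$ is approximated in norm on $F$ by $\sum_{k\le n}\phi_k$ for $n$ large (this time using the hypothesis that $\phi$ is the \emph{norm} limit of its own partial sums), and $\sum_{k\le n}\phi_k\sim_G\sum_{k\le n}\lambda_k\phi_k\precsim_G\sum_{k}\lambda_k\phi_k$, the last step because $\sum_{k\le n}\lambda_k\phi_k$ is an orthogonal summand of $\sum_k\lambda_k\phi_k$ and is therefore $G$-invariantly subequivalent to it by the same support-projection argument.

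I expect the main obstacle to be the bookkeeping around \emph{$G$-invariance of the witnesses in the presence of the countable orthogonal decomposition}: one must be sure that the isometry/element implementing "$\sum_{k\le n}\phi_k$ is an orthogonal compression of $\phi$" can be taken in $B^G$ (or $(B\otimes K_G)^G$ after stabilising), which follows because the cut-down is by $G$-invariant spectral projections of the $G$-invariant element $h_\phi$ from Theorem \ref{thm:equiwz}, but this needs to be said carefully. A secondary subtlety is making the "finite $\Rightarrow$ countable" passage uniform: one should fix $F$ and $\eps$ \emph{first}, then choose $n=n(F,\eps)$, then invoke the finite equivalence with tolerance $\eps$ — the order of quantifiers matters, and one should note that since each $\phi_k$ is contractive the errors are genuinely controlled by $\sum_{k>n}\lambda_k$ independently of which $a\in F$ is chosen. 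Everything else is a routine triangle-inequality estimate of the type already carried out in Lemmas \ref{lem:reflexivity} and \ref{lem:cpcds} and Proposition \ref{prop:commcomp}.
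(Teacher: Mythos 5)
Your proposal reaches the right conclusion and the tail-control mechanism (fix $F$ and $\eps$ first, choose $n$ with $\|\sum_{k>n}\phi_k(a)\|<\eps$ resp.\ $\sum_{k>n}\lambda_k<\eps$, then handle the head) is exactly the one the paper uses. But your route through the finite pieces is genuinely different and more roundabout than the paper's. The paper never invokes Lemma \ref{lem:cpcds}, Proposition \ref{prop:commcomp} or Proposition \ref{prop:compositions} here: it simply applies Lemma \ref{lem:reflexivity} to each $\phi_k$, $k\le n$, to get $e_k\in C^*(\phi_k(A))^G$ with $e_k\phi_k(a)e_k^*\approx\phi_k(a)$ up to $\eps/n$ on $F$, notes that orthogonality of the $\phi_k$ forces $e_i\perp e_k$ for $i\ne k$, and then writes down the single $G$-invariant witnesses $x=\sum_{k\le n}\lambda_k^{-1/2}e_k$ and $y=\sum_{k\le n}\lambda_k^{1/2}e_k$; the cross terms vanish, the scalars implement the rescaling directly, and one triangle inequality finishes each direction. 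This buys a two-line estimate in place of your chain of three subequivalences, and in particular avoids the descent from $M_n(B)$ back to $B$ that your appeal to the block-diagonal Lemma \ref{lem:cpcds} would require you to justify.

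One step of yours needs repair as stated: you justify $\sum_{k\le n}\phi_k\precsim_G\phi$ by saying the $h_{\phi_k}$ are ``mutually orthogonal subprojections of $h_\phi$'' and cutting down by spectral projections of $h_\phi$. That is not right: $h_\phi$ is a positive element of $\M(C_\phi)$, not a projection, and its spectral projections commute with all of $C_\phi$, so they cannot isolate the $k$-th summand. The correct mechanism -- and the one implicit in the paper's choice of witnesses -- is that any element of $C^*(\phi_k(A))$ annihilates $\phi_i(A)$ for $i\ne k$, so for $e_k\in C^*(\phi_k(A))^G$ one has $e_k\phi(a)=e_k\phi_k(a)$ (pass to the norm limit of the partial sums), and hence cutting $\phi$ down by $\sum_{k\le n}e_k$ yields $\sum_{k\le n}e_k\phi_k(a)e_k^*$. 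With that substitution your argument closes; the $G$-invariance bookkeeping you worry about is then automatic, since Lemma \ref{lem:reflexivity} already hands you $e_k$ in the fixed-point algebra of each corner.
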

\begin{proof} Fix a finite subset $F$ of $A$ and $\eps > 0$. Find an $n\in\IN$ such that
	$$\norm{\sum_{k=n+1}^\infty\phi_k(a)}<\eps$$
for any $a\in F$. Define the \Cs-subalgebras $B_k:= C^*(\phi_k(A))\subset B$ for any $k\in\IN$. By Lemma \ref{lem:reflexivity}, find $e_k\in B_k^G$ such that
	$$\norm{e_k\phi_k(a)e_k^*-\phi_k(a)}<\frac\eps n$$
for any $a\in F$ and $k=1,\ldots,n$. Observe that the orthogonality of the maps $\phi_k$ implies that $e_i\perp e_k$ for any $i\neq k$. With the element $x\in B^G$ defined as
	$$x:=\sum_{k=1}^n \frac{e_k}{\sqrt{\lambda_k}}$$
one has the estimate
	\begin{align*}
		\norm{x\left(\sum_{k=1}^\infty \lambda_k\phi_k(a)\right)x^*-\phi(a)}
			&\leq\sum_{k=1}^n\norm{e_k\phi_k(a)e_k^*-\phi_k(a)}+\norm{\sum_{k=n+1}^\infty\phi_k(a)}\\
			&< \sum_{k=1}^n\frac\eps n + \eps\\
			&\leq 2\eps
	\end{align*}
for any $a\in F$. Hence, $\phi\precsim_G\sum_{k=1}^\infty \lambda_k\phi_k$. For the converse subequivalence, find, if necessary, a new $n$ for which
	$$\norm{\sum_{k=n+1}^\infty \lambda_k\phi_k(a)}<\eps$$
for any $a\in F$, and new elements $e_k\in B_k^G$, $k=1,\ldots,n$ such that
	$$\norm{e_k\phi_k(a)e_k^*-\phi_k(a)}<\frac\eps n.$$
With the element $y\in B^G$ defined as
	$$y:=\sum_{k=1}^n \sqrt{\lambda_k}e_k$$
one has the estimate
	\begin{align*}
		\norm{y\phi(a)y^* - \sum_{k=1}^\infty a_k\phi_k(a)}
			&\leq\sum_{k=1}^n\norm{e_k\phi_k(a)e_k^*-\phi_k(a)}+\norm{\sum_{k=n+1}^\infty a_k\phi_k(a)}\\
			&< \sum_{k=1}^n\frac\eps n + \eps\\
			&\leq 2\eps,
	\end{align*}
for any $a\in F$, which implies that $\sum_{k=1}^\infty \lambda_k\phi_k\precsim\phi$.
\end{proof}

\begin{lemma} Let $\seq A\cup\{B\}$ be a countable family of $G$-algebras. Then any equivariant c.p.c.\ order zero map $\phi:\bigoplus_{k=1}^\infty A_k\to B$ is such that
	$$\phi\otimes e \sim \bigoplus_{k=1}^\infty\frac{\phi|_{A_k}}{2^k}$$
in $B\otimes K$, where $e\in K$ is a minimal projection and $\phi|_{A_k}$ is defined as
	$$\phi|_{A_k}(a_k) := \phi(0, \cdots, 0, a_k, 0, \cdots),\qquad\forall k\in\IN,a_k\in A_k.$$
\end{lemma}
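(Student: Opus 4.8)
The plan is to reduce the statement to the previous lemma by identifying the decomposition of $\phi$ as a countable sum of pairwise orthogonal maps and then tensoring with a minimal projection. First I would observe that since $\bigoplus_{k=1}^\infty A_k$ contains each $A_k$ as an orthogonal ideal, the restrictions $\phi|_{A_k}$ are equivariant c.p.c.\ order zero maps $A_k \to B$, and they are pairwise orthogonal: if $i \neq k$ and $a_i \in A_i$, $a_k \in A_k$, then $(0,\dots,a_i,\dots,0)(0,\dots,a_k,\dots,0) = 0$ in $\bigoplus_j A_j$, so the order zero property of $\phi$ forces $\phi|_{A_i}(a_i)\phi|_{A_k}(a_k) = 0$. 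Moreover, for $a = (a_k)_k \in \bigoplus_{k=1}^\infty A_k$ one has $a = \lim_{n\to\infty}(a_1,\dots,a_n,0,\dots)$ in norm, so by continuity $\phi(a) = \lim_{n\to\infty}\sum_{k=1}^n \phi|_{A_k}(a_k)$; regarding each $\phi|_{A_k}$ as defined on all of $\bigoplus_j A_j$ via the canonical projection, this exhibits $\phi$ as a countable sum of pairwise orthogonal equivariant c.p.c.\ order zero maps in the sense of the previous lemma.

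Next I would invoke the previous lemma with the summable sequence $\lambda_k := 2^{-k}$, which sums to $1$, to conclude $\phi \sim_G \sum_{k=1}^\infty 2^{-k}\,\phi|_{A_k}$ in $B$, where the sum on the right is the equivariant c.p.c.\ order zero map on $\bigoplus_k A_k$ obtained by summing the (orthogonal) summands. It remains to compare this with the \emph{external} direct sum $\bigoplus_{k=1}^\infty 2^{-k}\,\phi|_{A_k}$ viewed as a map into $B \otimes K$, the diagonal picture. The point is that an orthogonal sum of maps into $B$ and the diagonal direct sum of the same maps into $B \otimes K$ (placing the $k$-th summand in the $k$-th diagonal corner) are equivariantly Cuntz equivalent once one tensors by a minimal projection $e \in K$: this is exactly the countable, equivariant analogue of Lemma \ref{lem:cpcds}, witnessed by the $G$-invariant sequences built from $\phi|_{A_k}^{1/2}(e_n^{(k)})$ and $\phi|_{A_k}^{1/4}(e_n^{(k)})$ (with $G$-invariant approximate units for the relevant algebras) arranged along a single row or column of matrix units, together with Proposition \ref{prop:commcomp} to absorb the square and fourth-power discrepancies. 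Tensoring the equivalence $\phi \sim_G \sum_k 2^{-k}\phi|_{A_k}$ by $e$ via Lemma \ref{lem:tensorcse} and composing with this diagonalisation then yields $\phi \otimes e \sim_G \bigoplus_{k=1}^\infty 2^{-k}\,\phi|_{A_k}$ in $B \otimes K$.

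The main obstacle I anticipate is the bookkeeping in the countable diagonalisation step: unlike the finite case of Lemma \ref{lem:cpcds}, one must produce \emph{single} $G$-invariant elements $x$, $y \in (B \otimes K)^G$ that simultaneously handle infinitely many summands up to a given $\eps$ on a given finite set $F$, which requires truncating to finitely many $k$ (using that $\sum_{k > n} 2^{-k}\phi|_{A_k}(a)$ is small) and then only fixing up the first $n$ corners. One also needs to check carefully that all the witnessing elements genuinely lie in the fixed-point algebra $(B \otimes K)^G$ — this follows because the approximate units can be chosen $G$-invariant, $\phi|_{A_k}$ is equivariant so $\phi|_{A_k}^{t}(e_n^{(k)}) \in C^*(\phi|_{A_k}(A_k))^G$, and the matrix units $e_{ij}$ carry the trivial $G$-action, so the diagonal and row/column arrangements remain $G$-invariant. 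Once this truncation-and-approximation scheme is in place the norm estimates are routine and parallel those in the proof of Lemma \ref{lem:cpcds} and of the preceding lemma.
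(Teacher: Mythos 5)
Your proposal is correct and follows essentially the same route as the paper: reduce to the previous lemma via the orthogonal decomposition $\phi=\sum_k\phi|_{A_k}$, and then diagonalise by a countable analogue of Lemma \ref{lem:cpcds}, using row/column arrangements of $\phi|_{A_k}^{1/2}$ and $\phi|_{A_k}^{1/4}$ applied to a $G$-invariant approximate unit together with Proposition \ref{prop:commcomp}. The only cosmetic difference is that the paper avoids your truncation step by building the witnesses as single convergent infinite sums $x_n=\sum_k 2^{-k/2}\phi^{1/2}|_{A_k}(e_n)\otimes e_{k1}$ and $y_n=\sum_k 2^{-k/4}\phi^{1/4}|_{A_k}(e_n)\otimes e_{1k}$, where the weights $2^{-k}$ guarantee norm convergence.
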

\begin{proof} Assume, without loss of generality, that $e=e_{11}$, and denote by $A$ and $\psi$ the direct sum of the $A_k$s and of the $\frac{\phi|_{A_k}}{2^k}$s respectively. Fix a $G$-invariant approximate unit $\seq e$ of $A$, set
	$$\xi_{k,n}:=\frac{\phi^{\frac12}|_{A_k}(e_n)}{2^{k/2}},\qquad\eta_{k,n} := \frac{\phi^{\frac14}|_{A_k}(e_n)}{2^{k/4}}$$
	for any $k,n\in\IN$, and define the $G$-invariant sequences $\seq x, \seq y\in (B\otimes K)^G$ by
		$$x_n:=\sum_{k=1}^\infty \xi_{k,n}\otimes e_{k1}=\begin{bmatrix}
		\xi_{1,n} & 0 & \cdots\\
		\xi_{2,n} & 0 & \cdots\\
		\vdots & \vdots & \ddots
	\end{bmatrix},\quad
	y:=\sum_{k=1}^\infty \eta_{k,n}\otimes e_{1k}=\begin{bmatrix}
		\eta_{1,n} & \eta_{2,n} & \cdots\\
		0 & 0 & \cdots\\
		\vdots & \vdots & \ddots
	\end{bmatrix},$$
	By Theorem \ref{thm:equiwz} one gets to the conclusion that
		$$\lim_{n\to\infty}\norm{x_n(\phi\otimes e)(a)x_n^* - \psi^2(a)} = 0$$
	and
		$$\lim_{n\to\infty}\norm{y\psi^{\frac12}(a)y^* - \left(\sum_{k=1}^\infty\frac{\phi|_{A_k}}{2^k}\otimes e\right)(a)} = 0$$
	for any $a\in A$. Since $\psi\sim_G \psi^2 \sim_G \psi^{\frac12}$ by Proposition \ref{prop:commcomp} and $\phi\sim_G\sum_{k=1}^\infty\frac{\phi|_{A_k}}{2^k}$ by the previous lemma, the result now follows.
\end{proof}

\begin{proposition} Let $\seq A\cup\{B\}$ be a countable family of $G$-algebras. Then the semigroups $\prod_{n\in\IN}\Cu^G(A_n,B)$ and $\Cu^G\left(\bigoplus_{n\in\IN}A_n,B\right)$ are isomorphic.
\end{proposition}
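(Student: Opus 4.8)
The plan is to run the argument of Proposition~\ref{prop:w1add} with countable direct sums in place of finite ones, feeding in the two preceding lemmas where Lemma~\ref{lem:cpcds} was used in the finite case. Write $A:=\bigoplus_{n\in\IN}A_n$ and use the equivariant identification $A\otimes K_G\cong\bigoplus_n(A_n\otimes K_G)$, so that a representative of a class in $\Cu^G(A,B)$ is an equivariant c.p.c.\ order zero map $\Phi\colon\bigoplus_n(A_n\otimes K_G)\to B\otimes K_G$. For each $j$ I would set $\Phi|_{A_j}(x):=\Phi(0,\dots,0,x,0,\dots)$, with $x$ in the $j$-th slot; this is again an equivariant c.p.c.\ order zero map $A_j\otimes K_G\to B\otimes K_G$. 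Since a $G$-invariant sequence in $(B\otimes K_G)^G$ witnessing $\Phi\precsim_G\Psi$ also witnesses $\Phi|_{A_j}\precsim_G\Psi|_{A_j}$ for every $j$, the assignment
	$$\rho\colon\Cu^G(A,B)\longrightarrow\prod_{n\in\IN}\Cu^G(A_n,B),\qquad\rho([\Phi]):=\big([\Phi|_{A_n}]\big)_{n\in\IN},$$
is a well-defined, order-preserving semigroup homomorphism, and I claim it is the desired isomorphism.

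For the inverse I would proceed as follows. Given representatives $\phi_n\colon A_n\otimes K_G\to B\otimes K_G$ of a family $([\phi_n])_n$, let $\bigoplus_n\phi_n$ be the map $(x_n)_n\mapsto(\phi_n(x_n))_n$ from $\bigoplus_n(A_n\otimes K_G)$ to $\bigoplus_n(B\otimes K_G)\subseteq B\otimes K_G\otimes K$; it takes values in the $c_0$-direct sum because $\norm{\phi_n(x_n)}\le\norm{x_n}\to0$, it is equivariant, and it is readily checked to be c.p.c.\ order zero. After a (suppressed) equivariant identification $B\otimes K_G\otimes K\cong B\otimes K_G$ of the kind supplied by Proposition~\ref{prop:gammaG}, I would set $\sigma\big(([\phi_n])_n\big):=[\bigoplus_n\phi_n]$ and check that $\sigma$ and $\rho$ are mutually inverse. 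In one direction, $(\bigoplus_n\phi_n)|_{A_j}=\phi_j$ sits in the $j$-th corner and the corner embedding induces the identity on $\Cu^G$, so $\rho\circ\sigma=\id$ once one passes between $\phi_j$ and $\phi_j/2^j$ via Proposition~\ref{prop:commcomp}. In the other direction, the preceding lemma on equivariant c.p.c.\ order zero maps out of a countable direct sum gives $\Phi\otimes e\sim_G\bigoplus_n\Phi|_{A_n}/2^n\sim_G\bigoplus_n\Phi|_{A_n}$ (the last step again by Proposition~\ref{prop:commcomp}), while $\Phi\otimes e\sim_G\Phi$ under the same identification of $B\otimes K_G\otimes K$ with $B\otimes K_G$; hence $\sigma\circ\rho=\id$.

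It then remains to see that $\sigma$ is independent of the chosen representatives, is order preserving, and respects $+$. The first two reduce to showing that $\phi_n\precsim_G\psi_n$ for all $n$ if and only if $\bigoplus_n\phi_n\precsim_G\bigoplus_n\psi_n$; for the forward implication I would invoke the localisation of $\precsim_G$, picking $N$ with $2^{-N}\max_{a\in F}\norm a<\eps$ for the given $F\Subset A\otimes K_G$ and $\eps>0$, choosing $G$-invariant witnesses $b_j\in(B\otimes K_G)^G$ for the first $N$ components, and assembling $b:=\bigoplus_{j\le N}b_j$, a finite (hence bounded) block-diagonal element of $(B\otimes K_G\otimes K)^G$; on $F$ this witnesses the subequivalence up to $\eps$, the tail summands contributing at most $\eps$ since their maps have norm $<\eps$ there. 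The reverse implication is the corner argument already used for $\rho$. For additivity I would verify
	$$\bigoplus_n(\phi_n\hoplus\psi_n)\ \sim_G\ \Big(\bigoplus_n\phi_n\Big)\hoplus\Big(\bigoplus_n\psi_n\Big),$$
witnessed, exactly as at the end of the proof of Proposition~\ref{prop:w1add}, by $u_m\otimes p$ with $\seq u\subset(B\otimes K_G)^G$ an approximate unit and $p$ the basis-permutation unitary implementing $\ell^2(\IN\times\{1,2\})\cong\ell^2(\{1,2\}\times\IN)$, which is automatically $G$-invariant as it acts on a factor carrying the trivial action.

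I expect the main obstacle to be bookkeeping rather than conceptual: one must make sure that the corner embedding $B\otimes K_G\hookrightarrow B\otimes K_G\otimes K$ and the identification $B\otimes K_G\otimes K\cong B\otimes K_G$ are equivariant and act trivially on $\Cu^G$ (much as in Corollary~\ref{cor:stabilitycse} and Theorem~\ref{thm:equistability}), and that the block-diagonal witnessing sequences assembled above are uniformly bounded — which is why one truncates to finitely many summands and absorbs the tail into the error term. None of this goes beyond the finite case of Proposition~\ref{prop:w1add} together with the stabilisation machinery already developed.
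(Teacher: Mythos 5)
Your proposal is correct and follows essentially the same route as the paper: the same restriction map $\rho([\Phi])=([\Phi|_{A_n}])_{n}$, an inverse built from a block-diagonal direct sum (the paper weights the $n$-th block by $2^{-n}$ while you leave it unweighted, relying on the $c_0$-decay of elements of the domain --- an equivalent choice), and the same two preceding lemmas to show the compositions are the identity, with your version being somewhat more explicit about well-definedness, order preservation and additivity. Two small repairs: the equivalence $\bigoplus_n\phi_n/2^n\sim_G\bigoplus_n\phi_n$ is not literally an instance of Proposition~\ref{prop:commcomp} (the scaling differs from block to block, so it is not a single functional calculus applied to one map) but follows from the first of the two preceding lemmas or from the truncation argument you use elsewhere; and the infinite permutation unitary $p$ lies in $\M(K)$ rather than in $K$, so the witness $u_m\otimes p$ should be truncated to finitely many blocks to produce elements of $(B\otimes K_G\otimes K)^G$ itself.
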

\begin{proof} Let $\sigma:\prod_{n\in\IN}\Cu^G(A_n,B)\to \Cu^G\left(\bigoplus_{n\in\IN}A_n,B\right)$ be the semigroup homomorphism defined by
	$$\sigma\left([\phi_1],[\phi_2],\ldots\right) := 
	\left[(\id_B\otimes\gamma)\circ\bigoplus_{n\in\IN}\frac1{2^n}\phi_n\right],$$
where $\gamma : K_G \otimes K\to K_G$ is any equivariant $*$-isomorphism. For any minimal projection $e\in K$, $\gamma\circ(\id_{K_G}\otimes e)$ is conjugate to $\id_{K_G}$ by a $G$-invariant isometry $w\in (B(L^2(G)), \lambda_G)$ and therefore $\gamma\circ(\id_G\otimes e)\sim_G\id_{K_G}$. An inverse of $\sigma$ is provided by the semigroup homomorphism $\rho:\Cu^G\left(\bigoplus_{n\in\IN}A_n,B\right)\to\prod_{n\in\IN}\Cu^G(A_n,B)$ given by
	$$\rho([\phi]) := ([\phi|_{A_1}],[\phi|_{A_2}],\ldots),$$
where $\phi|_{A_k}(a_k) := \phi(a_k\otimes e_{kk})$ for any $k\in\IN$ and $a_k\in A_k$. Indeed, by the previous lemma
	$$(\id_B\otimes\gamma)\circ\bigoplus_{n\in\IN}\frac{\phi|_{A_n}}{2^n}\sim_G(\id_B\otimes\gamma)\circ(\phi\otimes e)\sim_G\phi$$
and
	$$\gamma_G\circ\left(\frac{\phi_k}{2^k}\otimes e_{kk}\right)\sim_G\gamma_G\circ(\phi_k\otimes e)\sim_G \phi_k,\qquad\forall k\in\IN$$
since every minimal projection $e_{kk}$ is Cuntz-equivalent to $e$, and $\lambda\phi_k\sim_G\phi_k$ for any $\lambda\in(0,1)$.
\end{proof}

\begin{proposition}\label{prop:w2add} For any triple of $G$-algebras $A$,
$B_1$ and $B_2$, the partially ordered semigroup isomorphism
		$$\Cu^G(A, B_1\oplus B_2)\cong \Cu^G(A,B_1)\oplus \Cu^G(A, B_2)$$
	holds.
\end{proposition}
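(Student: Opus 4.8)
The plan is to mirror the strategy used for additivity in the first variable (Proposition \ref{prop:w1add}), but now exploiting that a c.p.c.~order zero map landing in $B_1\oplus B_2$ decomposes as a pair of c.p.c.~order zero maps into the two summands. First I would define the candidate isomorphism $\tau:\Cu^G(A,B_1)\oplus\Cu^G(A,B_2)\to\Cu^G(A,B_1\oplus B_2)$ by $\tau([\phi_1]\oplus[\phi_2]):=[\phi_1\oplus\phi_2]$, where on the codomain we use the $G$-action $\beta_1\oplus\beta_2$ on $B_1\oplus B_2$ (and tensor everything with $\id_{K_G}$ as usual). In the other direction, composition with the two equivariant projections $p_k:B_1\oplus B_2\to B_k$ gives $\rho([\psi]):=([p_1\circ\psi]\oplus[p_2\circ\psi])$; since each $p_k\otimes\id_{K_G}$ is an equivariant $*$-homomorphism, functoriality (the covariant functor $\Cu^G(A,\ \cdot\ )$) shows $\rho$ is a well-defined semigroup homomorphism. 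That $\rho\circ\tau=\id$ is immediate because $p_k\circ(\phi_1\oplus\phi_2)=\phi_k$ on the nose.

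The substantive direction is $\tau\circ\rho=\id$, i.e.~that every equivariant c.p.c.~order zero map $\psi:A\otimes K_G\to(B_1\oplus B_2)\otimes K_G$ satisfies $\psi\sim_G (p_1\circ\psi)\oplus(p_2\circ\psi)$. Here I would invoke the structure theorem (Theorem \ref{thm:equiwz}): writing $\psi=h_\psi\pi_\psi$ and observing that $(B_1\oplus B_2)\otimes K_G\cong (B_1\otimes K_G)\oplus(B_2\otimes K_G)$, the central element $h_\psi$ splits as $h_1\oplus h_2$ with $h_1\perp h_2$, and correspondingly $\psi=\psi_1\hoplus\psi_2$ where $\psi_k:=p_k\circ\psi$ and $\psi_1\perp\psi_2$ in the sense that $\psi_1(A)\psi_2(A)=\{0\}$ inside $(B_1\oplus B_2)\otimes K_G$. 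But $\psi_1\hoplus\psi_2$ is, up to the canonical identification, precisely the map $\phi\mapsto(\psi_1(\phi),\psi_2(\phi))$ into the direct sum, so there is nothing to prove beyond unravelling notation: the two are literally equal, not merely Cuntz equivalent. Thus I would phrase this step as: under the identification of $(B_1\oplus B_2)\otimes K_G$ with the direct sum of the pieces, $\psi$ \emph{is} $(p_1\circ\psi)\oplus(p_2\circ\psi)$, and $\tau\circ\rho=\id$ follows directly.

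It remains to check that $\tau$ is additive and an order isomorphism. Additivity amounts to $(\phi_1\oplus\phi_2)\hoplus(\psi_1\oplus\psi_2)\sim_G(\phi_1\hoplus\psi_1)\oplus(\phi_2\hoplus\psi_2)$, which is witnessed by a fixed permutation matrix of the same flavour as the one appearing at the end of the proof of Proposition \ref{prop:w1add} (reordering the four matrix-unit slots), tensored with a $G$-invariant approximate unit for $B\otimes K_G$; this permutation is $G$-invariant since it involves only scalar matrix units and hence commutes with $\id_{K_G}$. For the order statement, since $\rho$ is a genuine (covariant) functor it is order preserving, so one only needs $\tau$ to be order preserving, i.e.~$\phi_k\precsim_G\psi_k$ for $k=1,2$ implies $\phi_1\oplus\phi_2\precsim_G\psi_1\oplus\psi_2$; if $\seq b^{(k)}\subset(B_k\otimes K_G)^G$ witnesses the $k$-th subequivalence, then $b_n:=b_n^{(1)}\oplus b_n^{(2)}\in((B_1\oplus B_2)\otimes K_G)^G$ witnesses the sum, because the cross terms vanish by the block structure. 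The only mild subtlety throughout — and the one point I would be careful to state explicitly — is that all the identifications $M_n(B\otimes K_G)\cong B\otimes K_G$ and $(B_1\oplus B_2)\otimes K_G\cong(B_1\otimes K_G)\oplus(B_2\otimes K_G)$ are equivariant, which is clear since $K_G$ is only ever tensored with $\id$; there is no genuine obstacle here, just bookkeeping parallel to Proposition \ref{prop:w1add}.
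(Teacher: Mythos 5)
Your proposal is correct and follows essentially the same route as the paper: decompose a map into $(B_1\oplus B_2)\otimes K_G\cong(B_1\otimes K_G)\oplus(B_2\otimes K_G)$ via the coordinate projections $\pi_k\otimes\id_{K_G}$, observe that the map is literally the pair of its components (so surjectivity/"$\tau\circ\rho=\id$" is just unravelling notation), and note that subequivalence of direct sums restricts to subequivalence of the blocks. Your write-up is somewhat more explicit about additivity and order preservation than the paper's, but the underlying argument is identical.
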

\begin{proof} Since $(B_1\oplus B_2)\otimes K_G$ is equivariantly isomorphic to
$(B_1\otimes K_G)\oplus (B_2\otimes K_G)$ one has that for every equivariant c.p.c.~order zero map
$\phi:A\to (B_1\oplus B_2)\otimes K_G$ there are equivariant c.p.c.~order zero maps $\phi_k:A\to
B_k\otimes K_G$, $k=1,2$ such that $\phi$ can be identified with\footnote{Such maps are given by $\phi_k :=
(\pi_k\otimes\id_{K_G})\circ\phi$, where $\pi_k$ is the natural projection $\pi_k:B_1\oplus B_2\to B_k$.} $\phi_1\hoplus\phi_2$. This shows that the map
$\rho: \Cu^G(A,B_1)\oplus \Cu^G(A,B_2)\to \Cu^G(A,B_1\oplus B_2)$ given by
		$$\rho([\phi_1]\oplus[\phi_2]) := [\phi_1\hoplus\phi_2]$$
	is surjective. Injectivity follows from the fact that
	$\phi_1\hoplus\phi_2\precsim_G\psi_1\hoplus\psi_2$ implies
	$\phi_1\precsim_G\psi_1$ and $\phi_2\precsim_G\psi_2$, which is obvious.
\end{proof}


\subsection{\label{ssec:cp}Relation with Crossed Products}

In KK-theory there is a group homomorphism between the equivariant KK-group and the KK-group
of the crossed product, \cite[\S2.6]{blackadarK}. We now provide an analogue of this result within the framework of the equivariant extension of the bivariant Cuntz semigroup of \cite{bcs} proposed in this paper.

\begin{proposition} Let $A$ and $B$ be $G$-algebras. Every
equivariant c.p.c.~order zero map $\phi:A\to B$ induces a c.p.c.~order zero map
$\phi_\rtimes:A\rtimes G\to B\rtimes G$ between the crossed products.
\end{proposition}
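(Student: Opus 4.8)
The plan is to use the cone picture of c.p.c.~order zero maps together with the functoriality of the full crossed product construction. Recall from \cite{wz} (and its equivariant refinement in \cite{grok}) that an equivariant c.p.c.~order zero map $\phi:A\to B$ corresponds to an equivariant $*$-homomorphism $\rho_\phi:C_0((0,1])\otimes A\to B$, where the cone $C_0((0,1])\otimes A$ carries the action $\id\otimes\alpha$. The full crossed product is a functor on $G$-algebras and equivariant $*$-homomorphisms, so $\rho_\phi$ induces a $*$-homomorphism
$$(C_0((0,1])\otimes A)\rtimes G\longrightarrow B\rtimes G.$$
The first step is therefore to identify $(C_0((0,1])\otimes A)\rtimes G$ with $C_0((0,1])\otimes(A\rtimes G)$, i.e. the cone over the crossed product; this is a standard fact, since $G$ acts trivially on the $C_0((0,1])$ tensor factor and crossed products commute with this kind of tensoring. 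Under this identification the induced map becomes a $*$-homomorphism $C_0((0,1])\otimes(A\rtimes G)\to B\rtimes G$, which by the correspondence in \cite{wz} is exactly the data of a c.p.c.~order zero map $\phi_\rtimes:A\rtimes G\to B\rtimes G$.

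The second step is to write down $\phi_\rtimes$ concretely on the dense subalgebra $C_c(G,A)\subset A\rtimes G$ and check it agrees with the abstract construction: one expects the formula $\phi_\rtimes(f)(g) = \phi(f(g))$ for $f\in C_c(G,A)$, using that $\phi$ intertwines the actions to verify compatibility with the twisted convolution and involution. That $\phi_\rtimes$ extends to a c.p.~contraction on all of $A\rtimes G$ follows either from the cone-picture argument above or directly: $\phi$ extends to a c.p.c.~order zero map $\phi\otimes\id:A\otimes\IK(L^2(G))\to B\otimes\IK(L^2(G))$ equivariantly, and one can pass through the regular representation of the crossed products to see boundedness and complete positivity. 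I would present the cone-functor argument as the main line and mention the explicit formula as a sanity check.

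The third step is to verify the order zero property for $\phi_\rtimes$. Here I would use the cone picture decisively: a c.p.c.~map is order zero precisely when it arises from a $*$-homomorphism out of the cone, and since $\phi_\rtimes$ was produced exactly as such a $*$-homomorphism (out of $C_0((0,1])\otimes(A\rtimes G)$), it is automatically order zero. Alternatively, one can argue directly that if $x,y\in(A\rtimes G)_+$ with $xy=0$ then, using an approximation by elements of $C_c(G,A)$ and the order zero property of $\phi$ fibrewise, $\phi_\rtimes(x)\phi_\rtimes(y)=0$; but this direct route is messier because orthogonality of positive elements in $A\rtimes G$ does not pass cleanly to fibrewise orthogonality in $A$, so I would avoid it in favor of the cone argument.

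The step I expect to be the main obstacle is the clean identification $(C_0((0,1])\otimes A)\rtimes G\cong C_0((0,1])\otimes(A\rtimes G)$ \emph{as equivariant-$*$-homomorphism data}, and ensuring the correspondence of \cite{wz} is applied to the correct cone action; one must be careful that the c.p.c.~order zero map recovered from a $*$-homomorphism on the cone $C_0((0,1])\otimes(A\rtimes G)$ is genuinely $\phi_\rtimes$ and not some rescaling, which amounts to tracking the canonical positive generator $t\in C_0((0,1])$ through the identifications. Everything else — functoriality of $\rtimes G$, contractivity, the explicit convolution formula — is routine once this bookkeeping is pinned down. This construction will also presumably be shown (in later results not in this excerpt) to be compatible with $\sim_G$, yielding the descent map $\Cu^G(A,B)\to\Cu(A\rtimes G,B\rtimes G)$ analogous to $j_G$ in KK-theory.
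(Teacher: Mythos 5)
Your argument is correct, but it is not the route the paper takes as its main proof; in fact the paper explicitly mentions your cone-picture argument as an \emph{alternative} proof and refers the reader elsewhere for it. The paper's own proof is the direct one you relegate to a ``sanity check'': it defines $\phi_*$ on $L^1(G,A)$ by $\phi_*(f)(g)=\phi(f(g))$, asserts that this is completely positive and contractive, and then verifies the order zero property by a convolution computation. Concretely, writing $\phi=h_\phi\pi_\phi$ via the equivariant structure theorem (Theorem \ref{thm:equiwz}) and using equivariance of $\phi$, one gets
\begin{equation*}
(\phi_*(a)*\phi_*(b))(g)=\int_G\phi\big(a(h)\big)\,\beta_h\big(\phi(b(h^{-1}g))\big)\,\de\mu(h)=h_\phi\,\phi\big((a*b)(g)\big),
\end{equation*}
so $a*b=0$ forces $\phi_*(a)*\phi_*(b)=0$, and $\phi_*$ extends to an order zero map $\phi_\rtimes$ on the completion. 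Comparing the two: the direct computation yields the explicit formula at once and actually proves the stronger multiplicative identity $\phi_*(a)*\phi_*(b)=h_\phi\phi_*(a*b)$ on the dense subalgebra, which is what makes the order zero verification work without ever reducing orthogonality in $A\rtimes G$ to fibrewise orthogonality in $A$ (the pitfall you correctly identify and avoid); your cone argument instead makes complete positivity, contractivity and the order zero property all automatic from the start, at the cost of the bookkeeping around $(C_0((0,1])\otimes A)\rtimes G\cong C_0((0,1])\otimes(A\rtimes G)$ and the canonical generator $t$ --- which is indeed unproblematic here, since the action on $C_0((0,1])$ is trivial and that algebra is nuclear. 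Both proofs are complete; yours is arguably cleaner on the order zero point, while the paper's gives the working formula that is then used in Proposition \ref{prop:rtimes}.
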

\begin{proof} Consider the map $\phi_*:L^1(G,A)\to L^1(G,B)$ given by
post-composition, that is,
		$$\phi_*(f)(g) := \phi(f(g)),\qquad\forall f\in L^1(G,A), g\in G.$$
	It is immediate to check that it gives rise to a c.p.c.~map. Furthermore, if $a,b\in L^1(G,A)$ are such that $a*b = 0$ and $\mu$ is the Haar measure on
	$G$, then one has
		\begin{align*}
			(\phi_*(a)*\phi_*(b))(g) &= \int_G
			\phi\big(a(h)\big)\beta_h\left(\phi(b(h^{-1}g))\right)\de\mu(h)\\
				&= \int_G h_\phi\phi\big(a(h)\alpha_h(b(h^{-1}g))\big)\de\mu(h)\\
				&= h_\phi\phi((a*b)(g)),\qquad\forall g\in G
		\end{align*}
	whence $\phi_*(a)*\phi_*(b) = 0$. Therefore, $\phi_*$ extends to a c.p.c.~order
	zero map $\phi_\rtimes:A\rtimes G\to B\rtimes G$.
\end{proof}

For an alternative proof of the above result, that relies on the one-to-one correspondence between c.p.c.~order zero maps and $*$-homomorphisms over the cone, we refer the reader to \cite[Proposition 2.3]{greg}.

\begin{proposition}\label{prop:rtimes} Let $A$ and $B$ be $G$-algebras and let
$\phi,\psi:A\to B$ be equivariant c.p.c.~order zero maps such that
$\phi\precsim_G\psi$. Then $\phi_\rtimes\precsim\psi_\rtimes$.
\end{proposition}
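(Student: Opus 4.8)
The statement to prove is that if $\phi \precsim_G \psi$ then $\phi_\rtimes \precsim \psi_\rtimes$ in $B \rtimes G$. The plan is to transport a witnessing $G$-invariant sequence for $\phi \precsim_G \psi$ into a witnessing sequence for the crossed-product subequivalence. The first step is to recall that $G$-invariant elements $b \in B^G$ sit naturally inside $B \rtimes G$ via the canonical (nondegenerate) embedding $\iota_B : B \hookrightarrow \M(B \rtimes G)$, and that the image of a $G$-invariant element is in fact a multiplier of $B \rtimes G$ that commutes with the copy of $C(G)$ (or, in the present picture, is implemented by the constant function $g \mapsto b$ in $L^1(G,B)$, noting that this is a bounded multiplier precisely because $b$ is $G$-invariant). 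Thus, given $b \in B^G$, conjugation $x \mapsto b x b^*$ makes sense on $B \rtimes G$.

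The second step is the key computation: for $b \in B^G$ and $a \in A$, one checks that, under the map $\phi_\rtimes$ and the embeddings, the element $b\, \psi_\rtimes(\tilde a)\, b^*$ agrees with the image of $b\,\psi(a)\,b^*$, where $\tilde a \in L^1(G,A)$ is a suitably chosen lift of $a$ (e.g.\ $a$ viewed through $\iota_A$, or a sequence $a_g = u_g^* a u_g$ averaged against an approximate unit of $L^1(G)$). Concretely, since the embedding is $*$-homomorphic on $B$ and $\phi_\rtimes$ restricts (on the level of the dense subalgebra $L^1(G,B)$) to post-composition by $\phi$, one gets $\iota_B(b)\,\phi_\rtimes(\iota_A(a))\,\iota_B(b)^* = \iota_B(b\,\psi(a)\,b^*)$ up to the identification; hence the norm estimate $\|b_n \psi(a) b_n^* - \phi(a)\| \to 0$ in $B$ pushes forward to $\|\iota_B(b_n)\, \psi_\rtimes(\iota_A(a))\, \iota_B(b_n)^* - \phi_\rtimes(\iota_A(a))\| \to 0$ in $B \rtimes G$, because $\iota_B$ is isometric. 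The third step is to upgrade this from elements of the form $\iota_A(a)$, $a \in A$, to all of $A \rtimes G$: since $\phi_\rtimes$ and $\psi_\rtimes$ are c.p.c.\ (in particular $1$-Lipschitz) and the span of $\{\iota_A(a)\, u_g : a \in A, g \in G\}$ is dense in $A \rtimes G$, while the conjugations $x \mapsto \iota_B(b_n) x \iota_B(b_n)^*$ are uniformly bounded, an $\eps/3$-argument together with the relation $u_g \phi_\rtimes(\cdot) = \phi_\rtimes(u_g \cdot)$-type covariance (which holds because $\phi$ is equivariant, so $\phi_\rtimes$ intertwines the canonical unitaries) reduces the general case to the case of elements in $\iota_A(A)$ already handled. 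One should also invoke the localised form of $\precsim_G$ noted after the definition, to work with a single $b \in B^G$ per $(F, \eps)$ rather than a sequence.

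**Main obstacle.** The technical heart — and the step I expect to need the most care — is verifying that $\iota_B(b) \in \M(B \rtimes G)$ for $b \in B^G$ really does conjugate $B \rtimes G$ into itself and that the computation $\iota_B(b)\,\phi_\rtimes(x)\,\iota_B(b)^* = \overline{(b\cdot)\,\phi_\rtimes(x)\,(b\cdot)}$ is the image of the expected element of $B \rtimes G$; this is where $G$-invariance of $b$ is essential (a non-invariant $b$ would not give a multiplier, or the conjugation would not land in the reduced/full crossed product correctly), and one must be careful about full vs.\ reduced crossed products and about whether $B$ is unital. The cleanest route is probably to phrase everything at the level of $L^1(G,B)$: realise $\iota_B(b)$ as the multiplier acting by $(bf)(g) = b f(g)$, check $G$-invariance makes this a bounded $*$-preserving multiplier of the $L^1$-algebra extending to $B \rtimes G$, and then the identity $(\iota_B(b) \cdot \phi_*(f) \cdot \iota_B(b)^*)(g) = b\,\phi(f(g))\,b^*$ is a one-line check from the definition of $\phi_*$. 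Everything else is routine approximation.
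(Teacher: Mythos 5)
Your proposal is correct and follows essentially the same route as the paper: the paper likewise transports the $G$-invariant witnesses $b_n$ into the crossed product, the only real difference being that it replaces your multipliers $\iota_B(b_n)\in\M(B\rtimes G)$ by the genuine elements $d_n:=b_n\otimes f_n\in L^1(G,B)$, with $\seq{f}$ an approximate unit for $L^1(G)$, so that the witnessing sequence lies in $B\rtimes G$ itself as the definition of $\precsim$ requires. Your key computation $(\iota_B(b)\,\psi_*(f)\,\iota_B(b)^*)(g)=b\,\psi(f(g))\,b^*$, which holds precisely because $\beta_g(b)=b$, is exactly the ``direct computation'' the paper invokes.
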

\begin{proof} Let $\seq f\subset L^1(G)$ be an approximate unit for $L^1(G)$. If $\seq
b\subset B^G$ is the sequence witnessing the subequivalence $\phi\precsim_G\psi$,
then a direct computation shows that the sequence $\seq d\subset
L^1(G,B)$ given by\footnote{For $f\in L^1(G)$ and $b\in B$, the tensor product $b\otimes f$, sometimes also denoted simply by $bf$, can be identified with the $B$-valued function of class $L^1$ on $G$ with respect to the Haar measure given by $g\mapsto f(g)b$ almost everywhere.}
		$$d_n := b_n\otimes f_n$$
	is such that
		$$\lim_{n\to\infty}\norm{d_n\psi_\rtimes(a\otimes f){d_n}^* -
		\phi_\rtimes(a\otimes f)} = 0,\qquad\forall a\otimes f\in L^1(G,A),$$
	whence $\phi_\rtimes\precsim\psi_\rtimes$.
\end{proof}

This last result shows that the assignment $\phi\mapsto\phi_\rtimes$ becomes
well-defined when considered at the level of classes. Furthermore, one can easily check that
	$$({\id_A})_{\rtimes} = \id_{A\rtimes G}$$
for any $G$-algebra $(A,G,\alpha)$. Therefore, one has the following result.

\begin{theorem}\label{th:jG} Let $A$ and $B$ be $G$-algebras. There is a
natural semigroup homomorphism
	$$j^G:\Cu^G(A,B)\to\Cu(A\rtimes G,B\rtimes G)$$
which is functorial in $A$ and $B$ and compatible with the composition product.
\end{theorem}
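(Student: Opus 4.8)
The plan is to assemble the theorem from the pieces already in place. First I would define the map $j^G$ on the level of c.p.c.~order zero maps by $\phi\mapsto\phi_\rtimes$, using the first proposition of this subsection to know that $\phi_\rtimes:A\rtimes G\to B\rtimes G$ is indeed a c.p.c.~order zero map. Since the definition of $\Cu^G(A,B)$ involves maps $A\otimes K_G\to B\otimes K_G$, I would actually apply this to $\phi:A\otimes K_G\to B\otimes K_G$, obtaining $\phi_\rtimes:(A\otimes K_G)\rtimes G\to (B\otimes K_G)\rtimes G$; then I would use the standard identification $(A\otimes K_G)\rtimes G\cong (A\rtimes G)\otimes K$ (which follows from $K_G=K(L^2(G))\otimes K$ together with $(A\otimes K(L^2(G)))\rtimes G$ being Morita/stably isomorphic to $A\rtimes G$, i.e.\ the fact that $K(L^2(G))$ carries the regular representation — this is essentially the content already invoked in Example~\ref{ex:equics}) to land the induced map in $\Cu(A\rtimes G,B\rtimes G)$. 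By Proposition~\ref{prop:rtimes}, $\phi\precsim_G\psi$ implies $\phi_\rtimes\precsim\psi_\rtimes$, so the assignment descends to equivalence classes, giving a well-defined map $j^G:\Cu^G(A,B)\to\Cu(A\rtimes G,B\rtimes G)$.

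Next I would check the semigroup structure. Additivity, $j^G([\phi]+[\psi])=j^G([\phi])+j^G([\psi])$, amounts to showing $(\phi\hoplus\psi)_\rtimes\sim(\phi_\rtimes\hoplus\psi_\rtimes)$; this is immediate since post-composition by $\phi$ and by $\psi$ commutes with the diagonal map $\Delta$ and with the direct sum, so in fact one gets an equality $(\phi\hoplus\psi)_\rtimes=\phi_\rtimes\hoplus\psi_\rtimes$ after the obvious identification of $M_2(B)\rtimes G$ with $M_2(B\rtimes G)$. Order preservation is exactly Proposition~\ref{prop:rtimes} again. Functoriality in $A$ and in $B$ reduces to checking that for an equivariant $*$-homomorphism (or c.p.c.~order zero map) $f$ one has $(f\otimes\id_{K_G})_\rtimes$ agreeing, under the above identifications, with $f_\rtimes\otimes\id_K$, and that pre-/post-composition is respected; since everything is defined by composition this is a diagram chase. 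Compatibility with the composition product, $j^G([\phi]\cdot[\psi])=j^G([\phi])\cdot j^G([\psi])$, follows from the evident identity $(\psi\circ\phi)_\rtimes=\psi_\rtimes\circ\phi_\rtimes$ on $L^1(G,-)$, which extends to the crossed products by continuity; naturality of the identification $(-\otimes K_G)\rtimes G\cong(-\rtimes G)\otimes K$ is then all that is needed to transport this to the stabilized setting.

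Finally, I would record naturality as a square: for equivariant morphisms $A'\to A$ and $B\to B'$ the obvious diagrams relating $j^G$ over $(A,B)$ and over $(A',B')$ commute, which again is a consequence of the functoriality statements proved in Section~\ref{sec:functoriality} together with the fact that the crossed-product and stabilization identifications are natural. The main obstacle, and the only point requiring genuine care, is the bookkeeping around the identification $(A\otimes K_G)\rtimes G\cong (A\rtimes G)\otimes K$: one must verify that this isomorphism is natural in $A$ with respect to equivariant c.p.c.~order zero maps (not just $*$-homomorphisms), so that $\phi_\rtimes$ really does give a morphism in $\Cu(A\rtimes G,B\rtimes G)$ compatibly with all the structure. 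Everything else is a routine transcription of the non-stabilized computations above.
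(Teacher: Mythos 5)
Your proposal is correct and follows essentially the same route as the paper: $j^G([\phi]):=[\phi_\rtimes]$, with well-definedness supplied by Proposition~\ref{prop:rtimes} and the identity $(\id_A)_\rtimes=\id_{A\rtimes G}$ giving compatibility with units. You are in fact more careful than the paper's (very terse) proof, which silently skips the bookkeeping around the identification $(A\otimes K_G)\rtimes G\cong(A\rtimes G)\otimes K$ and the verification of additivity and functoriality that you spell out.
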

\begin{proof} The sought map $j^G$ is defined as
		$$j^G([\phi]) := [\phi_\rtimes]$$
	which is well-defined as a consequence of the above proposition.
\end{proof}


\subsection{\label{ssec:cuhom}Equivariant Cuntz Homology}

A notion of \emph{Cuntz homology} for compact Hausdorff spaces has been introduced in \cite{bcs}. Its definition follows the way K-homology is recovered from KK-theory, namely by fixing the second argument to be the algebra of complex numbers $\CC$. More generally, one can see that $\Cu(A,\CC)$ encodes information relative to the finite dimensional representation theory of the \Cs-algebra $A$ in the first argument. However, this topic will be touched upon elsewhere \cite{tcuhom}.

We now proceed to define an equivariant version of Cuntz homology analogously to the non-equivariant case of \cite{bcs}, and provide a concrete realisation for compact group actions on compact Hausdorff spaces.

\begin{definition} A \emph{topological dynamical system} is a triple $(X,G,\alpha)$ consisting of a topological space $X$, a topological group and a continuous $G$-action $\alpha$ of $G$ on $X$.
\end{definition}

When the specification of the action is not necessary we shall refer to the topological space $X$ to denote the topological dynamical system $(X,G,\alpha)$. A topological dynamical system $(X,G,\alpha)$ is compact if its underlying topological space $X$ and the group $G$ is.

\begin{definition} Let $(X,G,\alpha)$ be a compact topological dynamical system. The equivariant Cuntz homology of $(X,G,\alpha)$ is the Abelian monoid
	$$\Cu^G(X) := \Cu^G(C(X),\CC).$$
\end{definition}

We now recall part of the terminology and definitions introduced in \cite[\S5.3]{bcs} for the reader's convenience.

\begin{definition} The spectrum $\sigma(\phi)$ of a c.p.c.\ order zero map
$\phi:C(X)\to K$ is the closed subset $C_\phi\subset X$ associated to the kernel
of $\phi$, i.e.
	$$\sigma(\phi) := \{x\in X\ |\ f(x) = 0\ \forall f\in\ker\phi\}.$$
\end{definition}

The set of isolated points of the spectrum of a c.p.c.\ order zero map from $C(X)$ to $K$ will be denoted by $\sigma_i(\phi)$, while the essential part is defined as
$\sigma_\ess(\phi):=\sigma(\phi)\smallsetminus\sigma_i(\phi)$. Set $\tilde\IN_0 := \IN_0\cup\{\infty\}$ and observe that, given a function $\nu\in \tilde\IN_0^X$, one can operate the same decomposition on $\supp\nu$, thus obtaining a part $\nu_i$ supported by isolated points, and a part $\nu_\ess$ supported by the essential part, such that $\nu = \nu_i + \nu_\ess$ and $\nu_i$ and $\nu_\ess$ have disjoint supports.

Denote by $\Mf^G(X)$ the set of functions $\nu$ in $\tilde\IN_0^{X/G}$ with closed support\footnote{that is, those functions $f\in\tilde\IN_0^{X/G}$ for which the subset $\{x\in X/G\ |\ f(x) = 0\}$ of $X/G$ is open.} and such that the essential part $\nu_\ess$, when non-zero, has range in $\{\infty\}$. Then one has the identification
	$$\Mf^G(X) = \Mf(X/G) \cong \Cu(X/G),$$
where $X/G$ denotes the orbit space of $X$ under the action of $G$, endowed with the quotient topology. Observe that, under the above hypotheses on $X$, $X/G$ is a compact Hausdorff space.

\begin{theorem} For any compact topological dynamical system $(X,G,\alpha)$ there is a natural monoid isomorphism
	$$\Cu^G(X) \cong \Cu(X/G).$$
\end{theorem}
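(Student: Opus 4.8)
The plan is to mimic the proof of the non-equivariant identification $\Cu(C(Y),\CC)\cong\Mf(Y)\cong\Cu(Y)$ from \cite[\S5.3]{bcs}, carrying the $G$-action along throughout, and then to invoke the identification $\Mf^G(X)=\Mf(X/G)\cong\Cu(X/G)$ recalled above. So it suffices to produce a natural isomorphism of partially ordered Abelian monoids between $\Cu^G(C(X),\CC)$ and $\Mf^G(X)$.

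First I would reduce to manageable representatives. By Theorem \ref{thm:equistability} and Corollary \ref{cor:special}, every class in $\Cu^G(X)=\Cu^G(C(X),\CC)$ has a representative that is an equivariant c.p.c.\ order zero map $\phi\colon C(X)\to\CC\otimes K_G=K_G$, and two such represent the same class precisely when they are equivariantly Cuntz equivalent. For such a $\phi$, the equivariant structure theorem (Theorem \ref{thm:equiwz}) supplies $\phi=h_\phi\pi_\phi$ with $\pi_\phi\colon C(X)\to\M(C_\phi)$ an equivariant nondegenerate $*$-homomorphism and $h_\phi\in\M(C_\phi)_+^G\cap C_\phi'$. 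Since $C(X)$ is commutative, the generated algebra $C_\phi=C^*(\phi(C(X)))$ is commutative, and being a \Cs-subalgebra of $K_G$ it has the form $c_0(\Lambda)$ for a countable set $\Lambda$ on which $G$ acts continuously; as $G$ is compact the point stabilizers are open, hence of finite index, so every $G$-orbit in $\Lambda$ is finite. This is exactly what makes the non-equivariant analysis of \cite{bcs} go through ``one orbit at a time'': to $\phi$ one associates, as in \cite[\S5.3]{bcs}, its spectrum $\sigma(\phi)\subseteq X$ together with a multiplicity function $\mu_\phi\in\Mf(X)$, and equivariance of $\phi$ forces $\sigma(\phi)$ to be a $G$-invariant closed set and $\mu_\phi$ to be $G$-invariant, so that $\mu_\phi$ descends to an element of $\Mf^G(X)=\Mf(X/G)$ (the constraint that its essential part be $\infty$-valued is inherited from the non-equivariant situation, by the same lower-semicontinuity argument applied on $X/G$). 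Compactness of $G$ is used here only through the averaging of Lemma \ref{lem:reflexivity}, which lets one pass to $G$-invariant data without changing the equivariant Cuntz class.

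Next I would verify that $[\phi]\mapsto\mu_\phi$ is a well-defined homomorphism of ordered monoids $\Cu^G(X)\to\Mf^G(X)$: monotonicity ($\phi\precsim_G\psi\Rightarrow\mu_\phi\le\mu_\psi$ pointwise) by localising the defining approximation near an orbit and running the rank comparison of \cite{bcs} with the $G$-invariant witnesses $b_n\in(B\otimes K_G)^G$, and additivity ($\mu_{\phi\hoplus\psi}=\mu_\phi+\mu_\psi$) because an orthogonal direct sum of c.p.c.\ order zero maps adds multiplicities, the orthogonality of the summands being preserved equivariantly exactly as in Lemma \ref{lem:cpcds}. Surjectivity is the realisation problem: given $\nu\in\Mf^G(X)=\Mf(X/G)$, one builds an equivariant $\phi$ with $\mu_\phi=\nu$ orbit by orbit, decomposing the closed set $\supp\nu\subseteq X/G$ into its isolated orbits and its essential part; over an isolated orbit $Gx$ with value $k$ one composes evaluation at $Gx$ with an appropriate finite-rank $G$-invariant projection sitting inside $K(L^2(G))\subseteq K_G$, and over the essential part (where $\nu\equiv\infty$) one uses a countable orthogonal family of such pieces indexed along a dense sequence of orbits; the orthogonal sum converges to an equivariant c.p.c.\ order zero map by the additivity results above, and has multiplicity function $\nu$. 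The point that makes this work is that $K_G=K(L^2(G))\otimes K$, built from the left regular representation, contains every irreducible representation of $G$---and, via $L^2(G/H)\subseteq L^2(G)$, of every closed subgroup $H$---with infinite multiplicity, so there is always enough room.

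The remaining and principal difficulty is injectivity: $\mu_\phi=\mu_\psi\Rightarrow\phi\sim_G\psi$. I would prove this by showing that every equivariant $\phi$ is equivariantly Cuntz equivalent to the normal form attached to $\mu_\phi$ in the previous step. Given $\eps>0$ and a finite $F\Subset C(X)$, use a partition of unity on $X/G$ subordinate to a cover by small saturated neighbourhoods of orbits to cut $\phi$ into pieces each essentially supported near a single orbit (via Lemma \ref{lem:reflexivity} and Proposition \ref{prop:commcomp}), compare each piece with the corresponding normal-form piece by a $G$-equivariant stabilisation argument---available because $G$ is compact---and reassemble, the resulting witnesses being averaged to $G$-invariant ones. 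This equivariant uniqueness step is the technical heart of the argument: the absorption argument of \cite{bcs} has to be upgraded so that all conjugating and intertwining elements can be chosen $G$-invariant and the local decomposition is by $G$-orbits rather than by points. Granting it, the previous steps yield a natural order-isomorphism $\Cu^G(X)\xrightarrow{\ \cong\ }\Mf^G(X)$, and composing with $\Mf^G(X)=\Mf(X/G)\cong\Cu(X/G)$ from \cite{bcs} gives the theorem; naturality in $(X,G,\alpha)$ is immediate since every construction above is functorial.
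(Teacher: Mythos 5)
Your plan follows essentially the same route as the paper's proof: decompose an equivariant representation of $C(X)$ into $K_G$ into orbit pieces, observe that the multiplicity is constant along each orbit so that the multiplicity function descends to $X/G$, and then invoke the non-equivariant presentation of Cuntz homology from \cite{bcs}. The paper's printed argument is only a two-sentence sketch of exactly this, so your more detailed account (finite orbits in the spectrum of $C_\phi$, well-definedness, realisation, and the equivariant uniqueness step) is a refinement of the same approach rather than a different one.
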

\begin{proof} Since $K(L^2(G))\otimes K \cong K$, every equivariant representation $\pi : C(X) \to K_G$ is easily seen to decompose, up to unitary equivalence, into one of the form
	$$\pi(f) = \sum_{k=1}^\infty M_f^{x_k}\otimes e_{kk},\qquad\forall f\in C(X),$$
where $\seq x\subset X$ and $M_f^{x_k}\in L^\infty(G)\subset B(L^2(G))$ is the multiplication operator associated to the function
	$$g \mapsto f(gx_k),\qquad\forall g\in G, k\in\IN.$$
Hence for every $x_k$, its full orbit $Gx_k$ appears in this decomposition, and the multiplicity of each of the points in $Gx_k$ is evidently constant. Therefore, the multiplicity functions can be assumed to be defined on the orbit space $X/G$, whence the presentation of Cuntz homology given in \cite{bcs} applies.
\end{proof}

\begin{corollary} Let $G$ be a compact group and let $(X,G,\alpha)$ and $(Y,G,\beta)$ be topological dynamical systems. The equivariant Cuntz homologies $\Cu^G(X)$ and $\Cu^G(Y)$ are isomorphic if and only if the orbit spaces $X/G$ and $Y/G$ are homeomorphic.
\end{corollary}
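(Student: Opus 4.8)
The plan is to deduce this corollary directly from the preceding theorem, which identifies $\Cu^G(X) \cong \Cu(X/G)$ naturally. First I would invoke that theorem twice to reduce the claim to: $\Cu(X/G) \cong \Cu(Y/G)$ as Abelian monoids if and only if $X/G$ and $Y/G$ are homeomorphic. Since $X$ and $G$ are compact (and $G$ acts continuously), the orbit spaces $X/G$ and $Y/G$ are compact Hausdorff spaces, so this is exactly the statement that Cuntz homology is a complete invariant for compact Hausdorff spaces up to homeomorphism.

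For the forward implication, the key observation is that, by the concrete presentation recalled before the theorem, $\Cu(Z) \cong \Mf(Z)$ for a compact Hausdorff space $Z$, the monoid of $\tilde\IN_0$-valued functions on $Z$ with closed support whose essential part takes the value $\infty$. From this combinatorial description one can recover the topology of $Z$: the space $Z$ can be reconstructed, for instance, as the set of minimal nonzero "characteristic-type" elements, or more robustly by noting that the lattice of open subsets of $Z$ is encoded by the ideal structure (or by the order-theoretic structure of idempotent-supported elements) of $\Mf(Z)$. Concretely, an open set $U \subseteq Z$ corresponds to the submonoid of functions supported in $U$, and closed points correspond to suitable minimal elements; a monoid isomorphism $\Cu(X/G) \to \Cu(Y/G)$ must carry this structure across, yielding a lattice isomorphism between the open-set lattices of $X/G$ and $Y/G$, which for sober (in particular compact Hausdorff) spaces gives a homeomorphism. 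The reverse implication is immediate: a homeomorphism $X/G \to Y/G$ induces a $*$-isomorphism $C(Y/G) \to C(X/G)$, hence by functoriality of $\Cu$ an isomorphism $\Cu(X/G) \cong \Cu(Y/G)$, and then one transports back along the theorem.

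The main obstacle I expect is making precise the reconstruction of the topology of a compact Hausdorff space $Z$ from the abstract monoid $\Mf(Z) \cong \Cu(Z)$ — that is, verifying that this functor is fully faithful enough to be a complete invariant. This is presumably already established in \cite{bcs} where Cuntz homology was introduced (the naturality clause in the theorem above suggests the functor $\Cu(-)$ on compact Hausdorff spaces is well understood there), so in the write-up I would cite that rather than redo it. If it is not isolated as a cited statement, the honest route is to identify $\Cu(Z)$ with the monoid of closed subsets of $Z$ decorated by multiplicities and observe that a monoid isomorphism respects finite suprema and the order, hence induces a bijection on the underlying closed (equivalently open) sets that is a lattice isomorphism, and then appeal to the standard fact that a compact Hausdorff space is determined by its lattice of closed sets. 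I would keep the argument short, leaning on the theorem just proved and on \cite{bcs}, and flag the reconstruction step as the only non-formal point.

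\begin{proof} By the previous theorem there are natural isomorphisms $\Cu^G(X)\cong\Cu(X/G)$ and $\Cu^G(Y)\cong\Cu(Y/G)$, so it suffices to prove that $\Cu(X/G)\cong\Cu(Y/G)$ as Abelian monoids if and only if $X/G$ and $Y/G$ are homeomorphic. Since $X$, $Y$ and $G$ are compact and the actions are continuous, the orbit spaces $X/G$ and $Y/G$ are compact Hausdorff. If $X/G$ and $Y/G$ are homeomorphic, then $C(X/G)\cong C(Y/G)$ as \Cs-algebras, and functoriality of $\Cu$ yields $\Cu(X/G)\cong\Cu(Y/G)$. Conversely, recall from \cite{bcs} that for a compact Hausdorff space $Z$ one has $\Cu(Z)\cong\Mf(Z)$, the monoid of functions $Z\to\tilde\IN_0$ with closed support whose essential part, when non-zero, has range in $\{\infty\}$, with pointwise addition and order. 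A monoid isomorphism $\Cu(X/G)\to\Cu(Y/G)$ preserves the partial order and all existing finite suprema, hence restricts to a bijection between the supports of elements, that is, to an isomorphism between the lattices of closed subsets of $X/G$ and of $Y/G$; equivalently, between their lattices of open sets. As both spaces are compact Hausdorff, and in particular sober, such a lattice isomorphism is induced by a unique homeomorphism $X/G\to Y/G$. \end{proof}
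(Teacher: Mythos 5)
Your proof is correct and follows the route the paper intends: the corollary is stated there without proof, as an immediate consequence of the preceding theorem together with the fact (from \cite{bcs}) that $\Cu(Z)\cong\Mf(Z)$ determines a compact Hausdorff space $Z$ up to homeomorphism, and your reconstruction of the closed-set lattice from the idempotent/support structure is exactly the missing ingredient. The only loose point is the phrase ``a monoid isomorphism preserves the partial order'': this is not automatic for a bare monoid isomorphism unless the order on $\Mf(Z)$ is shown to be algebraic, but since isomorphisms of (bivariant) Cuntz semigroups are taken in the category of partially ordered Abelian monoids throughout the paper, the order preservation is part of the hypothesis and the argument goes through.
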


	\section{\label{sec:equics}The Equivariant Cuntz Semigroup} Analogously to the ordinary Cuntz semigroup $\Cu(A)$ of a \Cs-algebra $A$, which can be recovered from the bivariant Cuntz semigroup as $\Cu(A)\cong\Cu(\CC,A)$, in Example \ref{ex:equics} we have defined the \emph{equivariant} Cuntz semigroup of the $G$-algebra $(A,G,\alpha)$ as
	$$\Cu^G(A):=\Cu^G(\CC,A).$$
We have also shown that this object has a natural identification with the set of Cuntz-equivalence classes of $G$-invariant positive elements from $A\otimes K_G$, and therefore it turns out to coincide with the equivariant Cuntz semigroup defined in \cite{gs}. There, a thorough investigation of this new functor is carried out, and the category $\Cu$ is enriched to reflect the equivariance of the theory developed. In particular, the $G$-invariant positive element picture is complemented by an equivariant generalisation of the module picture of \cite{cei}.

In this section we recall some main definitions and results drawn from \cite{gs} and propose an open projection picture for the equivariant theory of the Cuntz semigroup. In order to do so we shall generalise many of the results of \cite{ORT} to the equivariant setting first. Then the sought open projection picture will follow naturally. For concreteness, we now give the explicit definition of the equivariant Cuntz semigroup that we will employ throughout this section. We also recall that we are still under the assumption that every group with consider is compact and second countable.

\begin{definition}[Equivariant Cuntz Semigroup]\label{def:equiCu} Let
$(A,G,\alpha)$ be a $G$-algebra. Its equivariant Cuntz semigroup is the set of
classes
	$$\Cu^G(A) := (A\otimes K_G)^G_+/\sim_G,$$
where Cuntz comparison is now witnessed by $G$-invariant sequences, that is, if
$B$ is a $G$-algebra and $a,b\in B^G_+$, then
	$$a\precsim_G b\qquad\text{if}\qquad \exists\seq x\subset B^G\ |\
	\norm{x_nbx_n^*-a}\to0,$$
where $B^G$ denotes the fixed point algebra of $B$ with respect to the action of
$G$. The binary operation is still derived from direct sum of positive elements,
that is
	$$[a]+[b] := [a\oplus b],$$
for any $[a],[b]\in\Cu^G(A)$.
\end{definition}

The approach of \cite{gs} is different, closer in spirit to the original
definition (see Definition \ref{def:equimvn}) of equivariant K-theory (cf.
\cite[Definition 2.4]{gs}). Finite dimensional representations of $G$ are
replaced by separable ones, i.e.~those representations $\mu$ of $G$ over a
separable Hilbert space $H_\mu$, and Cuntz classes of $G$-invariant positive elements
from the \Cs-algebras $K(H_\mu\otimes A)$ are now considered. Cuntz
comparison is then implemented by $G$-invariant elements from $K(H_\mu\otimes
A,H_\nu\otimes A)$, where $\nu$ is any other separable representation of $G$
(cf. \cite[Definition 2.6]{gs}).

With arguments similar to those of \S\ref{ssec:equiK} one can see that
indeed these two different approaches lead to the same equivariant Cuntz
semigroup for a continuous action of a compact group $G$ over a \Cs-algebra $A$.
The map $\Cu^G$ turns out to be a sequentially continuous functor from the
category of \Cs-algebras to the category $\Cu$.
In particular, this implies that, for every $G$-algebra $(A,G,\alpha)$, the equivariant Cuntz
semigroup $\Cu^G(A)$ is an object in $\Cu$ and, if $(B,G,\beta)$ is another
$G$-algebra, then every equivariant $*$-homomorphism $\pi:A\to B$ induces a
morphism $\Cu^G(\pi):\Cu^G(A)\to\Cu^G(B)$ in the category $\Cu$.

As with the ordinary Murray-von Neumann and Cuntz semigroups, there are similar
connections between the equivariant versions of these objects. Let
$(A,G,\alpha)$ be a $G$-algebra and $p\in (A\otimes K_G)^G$ a projection. The
map that sends the class of $p$ in $V^G(A)$ to the class of $p$ in $\Cu^G(A)$ is
a well defined semigroup homomorphism, as a consequence of the following
result, which generalises \cite[Lemma 2.18]{apt2009} to the equivariant
setting. Recall from Section \ref{ssec:equiK} that $\preceq_G$ denotes the equivariant Murray-von Neumann subequivalence relation between projections.

\begin{lemma} Let $(A,G,\alpha)$ be a $G$-algebra and let $p,q\in A^G$ be
$G$-invariant projections. Then $p\preceq_G q$ if and only if $p\precsim_G q$.
\end{lemma}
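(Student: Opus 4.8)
This is the equivariant analogue of the classical fact that for projections, Murray--von Neumann subequivalence and Cuntz subequivalence coincide. The plan is to prove the two implications separately, with the forward direction being essentially immediate and the reverse direction requiring a small functional-calculus argument that must be carried out $G$-equivariantly.

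\textbf{The easy direction.} Suppose $p \preceq_G q$, so there is a $G$-invariant $v \in (A \otimes K_G)^G$ (or in $A^G$ after the usual stabilisation) with $v^*v = p$ and $vv^* \leq q$. Then taking the constant sequence $x_n := v$ we have $x_n q x_n^* = v q v^*$, and since $vv^* \leq q$ we get $v q v^* = vv^* \cdot q \cdot \ldots$ — more carefully, $v q v^* \geq v (vv^*) v^* = (v^*v)^2 = p$ while also $v q v^* \leq v \|q\| v^* $ is controlled; the cleanest route is to note $q \geq vv^*$ implies $q^{1/2}$ acts as a unit on $vv^*$, so $v^* q v = v^* v v^* v = p$ after replacing $v$ by $v q^{1/2}$ if needed, hence $x_n q x_n^* = p$ exactly and $p \precsim_G q$. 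The point to check is only that the witnessing element stays in the fixed-point algebra, which it does since $v$ and $q$ are $G$-invariant.

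\textbf{The harder direction.} Suppose $p \precsim_G q$, so there is a $G$-invariant sequence $\seq x \subset (A \otimes K_G)^G$ with $\|x_n q x_n^* - p\| \to 0$. Since $p$ is a projection, for $n$ large $\|x_n q x_n^* - p\| < 1/4$, so $x_n q x_n^*$ is invertible in the corner $p(A \otimes K_G)p$; let $z := (x_n q x_n^*)^{-1/2}$ computed in this corner, which is $G$-invariant since $x_n$, $q$ and $p$ all are and functional calculus is equivariant. Set $w := q^{1/2} x_n^* z \in (A \otimes K_G)^G$. Then $w^*w = z x_n q^{1/2} q^{1/2} x_n^* z = z (x_n q x_n^*) z = p$, and $ww^* = q^{1/2} x_n^* z^2 x_n q^{1/2} \leq q^{1/2}(\text{something bounded})q^{1/2}$; since $w^* w = p$ is a projection, $ww^*$ is a projection too, and it is dominated by the hereditary subalgebra generated by $q$, hence $ww^* \leq q$ after one more functional-calculus step (for a projection $r$ in $\overline{qAq}$ one has $r \leq q$). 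All these manipulations stay inside $(A\otimes K_G)^G$ because each operation (products, square roots, inverses in corners) preserves $G$-invariance. Finally one transports the conclusion from $A \otimes K_G$ back to $A$ using the standard embedding $b \mapsto b \otimes e_G$ exactly as in the proof of Corollary \ref{cor:stabilitycse}, noting that $(A^G \otimes \{e_G\}) = (A \otimes \{e_G\})^G$ so the witnessing partial isometry descends to $A^G$.

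\textbf{Main obstacle.} The only genuine subtlety is verifying that the $1/4$-perturbation argument producing the partial isometry can be run entirely within the fixed-point algebra; this is where the compactness of $G$ (ensuring $G$-invariant approximate units and that $(A\otimes K_G)^G$ is itself a $C^*$-algebra closed under continuous functional calculus) is used, exactly as in the reflexivity discussion following the definition of $\precsim_G$. Everything else is a transcription of the non-equivariant proof of \cite[Lemma 2.18]{apt2009}.
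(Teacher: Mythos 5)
Your proof is correct and follows exactly the route the paper takes: the paper's own proof simply asserts that the non-equivariant argument of \cite[Lemma 2.18]{apt2009} goes through verbatim once all elements are chosen $G$-invariant, and your write-up is precisely that argument (the $\tfrac14$-perturbation producing a $G$-invariant partial isometry, plus the trivial converse) with the invariance bookkeeping made explicit. The only blemish is the garbled start of the easy direction — with $v^*v=p$ and $vv^*\leq q$ the correct conjugation is $v^*qv$, and $v(vv^*)v^*=(vv^*)^2=vv^*$, not $(v^*v)^2$ — but you land on the right computation $v^*qv=v^*vv^*v=p$ before the paragraph ends.
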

\begin{proof} Thanks to \cite[Proposition 2.5]{gs}, the same proof of
\cite[Lemma 2.18]{apt2009} applies almost verbatim by taking all the elements to
be $G$-invariant.
\end{proof}

The above result does not imply that Murray-von Neumann equivalence is equivalent to Cuntz equivalence on projections. However, as in the non-equivariant theory, there are special cases where the equivariant
Murray-von Neumann semigroup embeds into the equivariant Cuntz semigroup. A
stably finite $G$-algebra $(A,G,\alpha)$ is a $G$-algebra whose underlying
\Cs-algebra $A$ is stably finite. The following result is an equivariant
generalisation of \cite[Lemma 2.20]{apt2009}.

\begin{lemma}\label{lem:vinjects} Let $(A,G,\alpha)$ be a stably finite $G$-algebra. Then the
natural map $V^G(A)\to\Cu^G(A)$ is injective.
\end{lemma}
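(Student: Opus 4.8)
The plan is to mimic the classical argument from \cite[Lemma 2.20]{apt2009}, upgrading each step to the equivariant setting by insisting that all witnessing elements be $G$-invariant. First I would let $p,q\in V^G(A)$ be classes of $G$-invariant projections in $A\otimes K_G$ (using the picture \eqref{eq:equimvn}) and assume that they map to the same class in $\Cu^G(A)$, i.e.\ $p\precsim_G q$ and $q\precsim_G p$. The previous lemma then gives $p\preceq_G q$ and $q\preceq_G p$, so there are $G$-invariant elements $v,w\in (A\otimes K_G)^G$ with $p = v^*v$, $vv^*\leq q$, $q = w^*w$, $ww^*\leq p$. The goal is to produce a \emph{single} $G$-invariant partial isometry realising $p\simeq_G q$; equivalently, to show $vv^* = q$ (or that $p$ and $q$ are genuinely Murray--von Neumann equivalent, not merely mutually subequivalent).

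The key step is stable finiteness. Composing the two subequivalences gives a $G$-invariant element $z := wv$ (or a suitable product) with $z^*z = p$ and $zz^* \leq p$, so $p$ is subequivalent to itself via an element whose range projection sits below $p$. In a stably finite algebra a projection cannot be equivalent to a proper subprojection of itself, so $zz^* = p$; running the symmetric argument yields $vv^* = q$. The one subtlety is that Murray--von Neumann subequivalence as defined in \S\ref{ssec:equiK} only asks $vv^*\leq q$ with $v^*v = p$, which is not literally "$p$ equivalent to a subprojection", so I would first pass, via functional calculus on $v$ (polar-decomposition-style: replace $v$ by the partial isometry in its polar decomposition, which lies in $(A\otimes K_G)\dd$ and is $G$-invariant since $v$ is), to an honest $G$-invariant partial isometry $u$ with $u^*u = p$ and $uu^*$ an actual subprojection of $q$; and similarly on the other side. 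Then the finiteness argument applies verbatim to conclude $u^*u = p \simeq_G uu^* = q$.

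I would then observe that the whole chain of manipulations — polar decomposition, products of partial isometries, the finiteness cancellation — preserves $G$-invariance automatically: polar decomposition is canonical, hence intertwined by the (bidual) action, and products and adjoints of $G$-invariant elements are $G$-invariant. This is exactly why \cite[Proposition 2.5]{gs} is invoked in the preceding lemma, and I would cite it again here to legitimise working with $G$-invariant lifts of the Cuntz subequivalences. Finally, injectivity of $V^G(A)\to\Cu^G(A)$ follows: $[p] = [q]$ in $\Cu^G(A)$ forces $p\simeq_G q$, i.e.\ $[p] = [q]$ in $V^G(A)$.

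The main obstacle I anticipate is the passage from "mutual $G$-equivariant Cuntz subequivalence of projections" to an honest $G$-invariant partial isometry implementing equivalence: one must check that the partial isometry extracted from a polar decomposition (a priori living in the enveloping von Neumann algebra) can be arranged to be $G$-invariant and, crucially, that its range and source projections land back inside $A\otimes K_G$ rather than merely in the bidual. For projections this is standard since the relevant partial isometry is $up^{1/n}$-type and norm-limits stay in the algebra, but the bookkeeping of invariance under the bidual action $\beta_g\dd$ is the point that needs care, and is where I would lean on Theorem \ref{thm:equiwz} and \cite[Proposition 2.5]{gs}.
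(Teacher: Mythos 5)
Your proposal is correct and follows essentially the same route as the paper, which simply observes that the fixed point algebra remains stably finite and defers to the non-equivariant argument of \cite[Lemma 2.20]{apt2009} with all witnesses taken $G$-invariant; you have merely written out the Schr\"oder--Bernstein/finiteness cancellation that this citation hides. One small simplification: since $v^*v=p$ is a projection, $v$ is automatically a partial isometry in $(A\otimes K_G)^G$, so the polar-decomposition detour through the bidual is unnecessary.
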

\begin{proof} Since the algebras are stably finite and $G$ is compact, the fixed point algebras and the crossed products are also stably finite. Hence, the same proof of \cite[Lemma 2.20]{apt2009} applies almost
verbatim by taking all the elements to be $G$-invariant.
\end{proof}

The \emph{completed} representation semiring
$\Cu(G)$, or simply the representation semiring, as defined in \cite[Definition
3.1]{gs}, is the semiring arising by considering separable representations $G$
rather than just the finite dimensional ones. We choose to include the word
\emph{complete} here because $\Cu(G)$ can be regarded as a $\sup$-completion of
the semiring $V^G(\CC)$, however we sometimes refrain from specifying this
explicitly. As in the case of K-theory, where $R(G)\cong
K_0^G(\CC)$, it turns out that there is an ordered semigroup isomorphism between
$\Cu(G)$ and $\Cu^G(\CC)$ \cite[Theorem 3.4]{gs}, which is then an object in the
category $\Cu$.

Let $(A,G,\alpha)$ be a $G$-algebra. Definition 3.10 and Theorem 3.11 of
\cite{gs} show that the equivariant Cuntz semigroup $\Cu^G(A)$ has a natural
$\Cu(G)$-semimodule structure and, as such, $\Cu^G(A)$ belongs to a subcategory
of $\Cu$, denoted $\Cu^G$ \cite[Definition 3.7]{gs}. As we are not
particularly interested in this category, we refer the reader to the already
cited work of Gardella and Santiago for more details. Here we limit ourself to
observing that, thanks to \cite[Theorem 3.11]{gs}, by equipping every
equivariant Cuntz semigroup with this $\Cu(G)$-semimodule structure, $\Cu^G$
becomes a functor from the category of $G$-algebras to the category $\Cu^G$.


\subsection{The Module Picture}

A module picture for the equivariant Cuntz semigroup is introduced in Section 4
of \cite{gs}. Part of the definitions we give here differ slightly from those
given in the cited work, but nonetheless lead to the same objects and results.

\begin{definition}[Equivariant Hilbert \Cs-module] Let $(A,G,\alpha)$ be a
$G$-algebra. An equivariant Hilbert $A$-module is a pair $(E,\rho)$ consisting
of a Hilbert $A$-module $E$ and a strongly continuous group homomorphism
$\rho:G\to U(E)$ such that
	\begin{enumerate}[i.]
		\item $\rho_g(a\xi) = \alpha_g(a)\rho_g(\xi),\quad\forall g,\in G, a\in
		A,\xi\in E$;
		\item $(\rho_g(\xi),\rho_g(\eta)) = \alpha_g((\xi,\eta)),\quad\forall g\in
		G, \xi,\eta\in E$.
	\end{enumerate}
\end{definition}

When the actions are understood, or there is no risk of confusion, we shall denote an
equivariant Hilbert $A$-module $(E,\rho)$ by its underlying Hilbert $A$-module
$E$ alone. Two equivariant Hilbert $A$-modules $(E,\rho)$ and $(F,\sigma)$ are
said to be equivariantly isomorphic (in symbols $E\cong_GF$) if there exists a
$G$-invariant unitary $u$, that is $u\circ\rho_g = \sigma_g\circ u$ for any
$g\in G$, in $B(E,F)^G$. An equivariant $A$-module $(Y,\eta)$ is said to be an
\emph{equivariant $A$-submodule} of $(E,\rho)$ if $Y$ is an $A$-submodule of $E$
which is stable under the action $\rho$ on $E$, i.e. $\rho_g(Y)\subset Y$ for
any $g\in G$, and $\eta$ coincides with the restriction of $\rho$ onto $Y$, that
is $\eta_g = \rho_g|_Y$ for any $g\in G$.

In order to define a Cuntz comparison of equivariant Hilbert \Cs-modules that
resembles the ordinary definition of \cite{cei} we need a notion of
\emph{equivariant compact containment}. This is done in \cite[Definition
4.10]{gs}, of which we give a slightly different version, that already
incorporates the comment that follows it in \cite{gs}, namely that the
contraction $a$ below can always be chosen to be $G$-invariant by a simple
averaging with respect to the normalised Haar measure on $G$.

\begin{definition} Let $(A,G,\alpha)$ be a $G$-algebra, $(E,\rho)$ an
equivariant $A$-module and $(F,\sigma)$ an equivariant $A$-submodule of $E$. We
say that $(F,\sigma)$ is equivariantly compactly contained in $(E,\rho)$ (in
symbols $F\cc_G E$) if there exists a $G$-invariant positive contraction $a\in
K(E)^G_+$ such that $a|_F=\id_F$.
\end{definition}

With the above definition, together with the notion of isomorphism of
equivariant Hilbert \Cs-modules we can now give a notion of Cuntz comparison for
these objects.

\begin{definition} Let $(A,G,\alpha)$ be a $G$-algebra and let $E$ and $F$ be
equivariant Hilbert $A$-modules. We say that $E$ is equivariantly
Cuntz-subequivalent to $F$ (in symbols $E\precsim_G F$) if
	$$\forall E'\cc_G E\qquad\exists F'\cc_G F\qquad|\qquad E'\cong_GF'.$$
\end{definition}

An equivariant Hilbert $A$-module $E$ is said to be countably generated if its
underlying Hilbert $A$-module is. Denoting the antisymmetrisation of
$\precsim_G$ by $\sim_G$, and the set of $G$-equivariant countably generated Hilbert $A$-modules by $H^G(A)$, we can then define the module picture of the Cuntz
semigroup of the $G$-algebra $A$ as the set of $\sim_G$-equivalence classes of
countably generated equivariant Hilbert $A$-modules, equipped with the binary
operation arising from the direct sum of modules, viz.
	$$\Cu^G_H(A) := H^G(A)/\sim_G.$$
A relation between the functors $\Cu^G$ and $\Cu^G_H$ can be established when $G$
is second countable. In this case, it turns out that there is a natural
isomorphism between $\Cu^G(A)$ and $\Cu^G_H(A)$ for any $G$-algebra $A$, that can
be taken, not only in the category \Cu, but even in the category $\Cu^G$ of
partially ordered $\Cu(G)$-semimodules briefly mentioned above (cf.
\cite[Proposition 4.13]{gs}).


\subsection{The Open Projection Picture}

We are now ready to introduce an open projection picture for the equivariant Cuntz semigroup as defined in this section.

\begin{definition} Let $A$ be a $G$-algebra. A $G$-invariant open projection is
an open projection in $(A^G)\dd$.
\end{definition}

The above definition entails that every $G$-invariant open projection is the
strong limit of an increasing sequence of positive elements from the fixed point
algebra.

\begin{lemma}\label{lem:modmean} If $(E,\rho)$ is an equivariant Hilbert
$A$-module of the form $\overline{aA}$ for some $a\in A_+$, then there exists
$\bar a\in A^G$ such that $E\cong_G \overline{\bar aA}$.
\end{lemma}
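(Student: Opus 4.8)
The statement asserts that an equivariant Hilbert $A$-module $E$ which, as a plain Hilbert $A$-module, has the form $\overline{aA}$ for some $a\in A_+$ is equivariantly isomorphic to $\overline{\bar aA}$ for some \emph{$G$-invariant} $\bar a\in A^G$. The natural approach is to produce $\bar a$ by averaging $a$, or a suitable function of $a$, over the group, and then to identify $E$ with the hereditary submodule generated by $\bar a$ inside $A$ (viewed as the standard module $A_A$). The key point is that a module of the form $\overline{aA}$ is, up to isomorphism, the same as the hereditary subalgebra $A_a=\overline{aAa}$, which depends only on the support projection $p_a\in A\dd$; so what I really need is a $G$-invariant positive element with the \emph{same} support projection (or at least one generating the same hereditary subalgebra, hence the same module).

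\textbf{Step 1 (reduce to support projections).} First I would recall from the preliminaries that for $a\in A_+$ the module $\overline{aA}$ depends only on the open projection $p_a\in A\dd$, and that $\overline{aA}\cong\overline{bA}$ as Hilbert $A$-modules whenever $p_a=p_b$. Since $(E,\rho)$ is equivariant, the action $\rho$ transports to an action on $\overline{aA}$; because $\rho_g$ is $A$-linear (twisted by $\alpha_g$) and unitary, it preserves the hereditary subalgebra $A_a$, hence fixes the open projection $p_a$, so $p_a\in (A^G)\dd$ is a $G$-invariant open projection (here I use that the bidual action $\alpha\dd$ fixes $p_a$ because $\rho$ does and the two actions are compatible on $\overline{aA}$). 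The problem is therefore reduced to: given a $G$-invariant open projection $p$ in $A\dd$, find $\bar a\in A^G_+$ with $p_{\bar a}=p$.

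\textbf{Step 2 (average to get a $G$-invariant element).} Since $G$ is compact, let $\mu$ be normalised Haar measure and set $\bar a:=\int_G \alpha_g(a)\,d\mu(g)$, a Bochner integral in $A$. Then $\bar a\in A^G_+$. I claim $p_{\bar a}=p_a$: indeed $\alpha_g(a)\le \|a\|\,p_a$ in $A\dd$ for every $g$ (because $\alpha\dd_g$ fixes $p_a$, so $\alpha_g(a)$ is supported under $p_a$), hence $\bar a\le \|a\|\,p_a$, giving $p_{\bar a}\le p_a$. For the reverse inequality I would argue that $\bar a$ cannot be ``too small'': the element $a$ itself is dominated by (a multiple of) $\bar a$ in the relevant sense — concretely, one shows $a\precsim\bar a$ within $A$ using that $a\le c\,\bar a + \varepsilon$ fails only if $a$ has mass outside the support of $\bar a$, which is impossible since $\bar a$ is a strictly positive combination of the $\alpha_g(a)$ and these all have support $\alpha\dd_g(p_a)\le p_a$ while summing to something with full support $p_a$ by faithfulness of integration. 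More cleanly: for $\omega$ a state of $A\dd$, $\omega(\bar a)=0$ forces $\omega(\alpha_g(a))=0$ for a.e. $g$, hence (by continuity) for all $g$, hence $\omega(a)=0$; this gives $p_{\bar a}\ge p_a$. Combining, $p_{\bar a}=p_a$.

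\textbf{Step 3 (conclude with the module isomorphism).} With $\bar a\in A^G_+$ and $p_{\bar a}=p_a$, the non-equivariant theory gives a Hilbert $A$-module isomorphism $u:\overline{aA}\to\overline{\bar aA}$. It remains to check $u$ can be taken $G$-invariant. Both modules carry the restriction of the bidual action; since $u$ is the canonical identification of two submodules of $A\dd$ cut out by the \emph{same} projection $p=p_a=p_{\bar a}$ (both equal the closure of $pA\cap A$ inside the standard module), $u$ is literally the identity on $pA\dd\cap A$, hence intertwines the two restricted $G$-actions automatically. Thus $E\cong\overline{aA}\cong_G\overline{\bar aA}$, which is the assertion. \textbf{The main obstacle} is Step 2, the reverse inequality $p_{\bar a}\ge p_a$: one must rule out the averaged element degenerating, and the cleanest route is the state/lower-semicontinuity argument sketched above, using faithfulness of the Haar integral together with continuity of $g\mapsto\alpha_g(a)$; alternatively one invokes that $a$ and $\bar a$ generate the same hereditary subalgebra because $a\le\|a\|\,\bar a'$ for $\bar a'$ a rescaling, via $\int_G\alpha_g(a)\,d\mu\ge \delta\,a$ on the support of any fixed $\alpha_{g_0}(a)$ by compactness of $G$.
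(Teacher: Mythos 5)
There is a genuine gap, and it is fatal to the strategy: you average the wrong action. Steps 1 and 2 rest on the claim that the bidual action $\alpha_g^{**}$ fixes the support projection $p_a$ (equivalently, that $\alpha_g$ preserves the hereditary subalgebra $A_a$), which you deduce from the fact that $\rho_g$ preserves $E=\overline{aA}$. But $\rho$ is an \emph{abstract} compatible action on the module $E$: it is not the restriction of $\alpha$, the subset $\overline{aA}\subseteq A$ need not be $\alpha$-invariant, and the induced action $\operatorname{Ad}_{\rho_g}$ on $K(E)\cong A_a$ need not agree with $\alpha_g$. Concretely, take $A=M_2$, $G=\mathbb{Z}/2$ acting by $\alpha_1=\operatorname{Ad}(u)$ with $u=\left(\begin{smallmatrix}0&1\\1&0\end{smallmatrix}\right)$, $a=e_{11}$, and $\rho_1(\xi):=\xi u$ on $E=e_{11}A$; this satisfies both compatibility axioms, yet $\alpha_1^{**}(p_a)=e_{22}\neq p_a$, and your average $\int_G\alpha_g(a)\,\mathrm{d}\mu(g)=\tfrac12\cdot 1$ generates all of $M_2$, which is not isomorphic to $E$ even as a plain Hilbert module. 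So the inequality $p_{\bar a}\leq p_a$ fails (your own state argument correctly shows $p_{\bar a}=\bigvee_g\alpha_g^{**}(p_a)$, which is strictly larger here), and the element you produce is simply not the right one. Step 3 suffers from the same confusion: the action carried by $E$ is $\rho$, not a restriction of $\alpha^{**}$, so the canonical identification of two submodules cut out by the same projection has no reason to intertwine $\rho$ with the canonical action $\bar a b\mapsto\bar a\alpha_g(b)$.

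The correct move, and the one the paper makes, is to average over the \emph{module} action: set $\bar a:=\int_G\rho_g(a)\,\mathrm{d}\mu(g)\in E$. Invariance of the Haar measure gives $\rho_g(\bar a)=\bar a$, the compatibility $(\rho_g\xi,\rho_g\eta)=\alpha_g((\xi,\eta))$ then forces $\bar a^*\bar a$ to be $\alpha$-invariant (so one obtains a genuinely $G$-invariant positive generator, passing to $|\bar a|$ if needed), a small-$\varepsilon$ estimate using uniform continuity of $g\mapsto\rho_g(a)$ near the identity shows $\overline{\bar aA}=E$, and the canonical action on $\overline{\bar aA}$ agrees with $\rho$ because $\rho_g(\bar ab)=\bar a\alpha_g(b)$. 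In the example above this recipe yields $e_{11}\tfrac{1+u}{2}$, whose module is $E$ with the correct action and whose modulus is a multiple of the $G$-invariant projection $\tfrac{1+u}{2}$.
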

\begin{proof} Clearly $a\in E$. Since the map $g\mapsto \rho_g(a)$ is uniformly
continuous, for every $\eps>0$ there exists a neighbourhood $N$ of the identity
$e$ of the group $G$ such that $\norm{\rho_g(a)-a}<\eps$, for any $g\in N$.
Hence,
	\begin{align*}
		\int_G\rho_g(a)\de\mu(g) &\geq \int_N\rho_g(a)\de\mu(g)\\
			&\geq\int_N(a-\eps)_+\de\mu(g)\\
			&= \mu(N)(a-\eps)_+,
	\end{align*}
	with $\mu(N)>0$ by the regularity of the Haar measure $\mu$ on $G$.	By setting
		$$\bar a := \int_G\rho_g(a)\de\mu(g)$$
	one has $\bar a\in A_+$ and $p_{\bar a}\geq p_{(a-\eps)_+}$ for any $\eps>0$, so that
	$E\cong\overline{\bar aA}$, and $\rho_g(\bar a) = \bar a$ for any $g\in G$.
	For the inner product one has
	\begin{align*}
		(\rho_g(\bar ab),\rho_g(\bar ac)) &= \rho_g(\bar ab)^*\rho_g(\bar ac)\\
		&= \alpha_g(b)^*\bar a^2\alpha_g(c)\\
		&= \alpha_g(b^*\bar a^2c),\qquad\forall g\in G
	\end{align*}
	and by taking approximate units for $b$ and $c$ one then finds $\bar a^2 =
	\alpha_g(\bar a^2)$ for any $g\in G$, whence $\bar a\in A^G$.
\end{proof}
Let $a\in A^G$ be a $G$-invariant positive element and, like in the non-equivariant case, denote by $E_a$ the
equivariant Hilbert $A$-module generated by $(\overline{aA},\rho)$, where the
strongly continuous action $\rho$ is given by
	$$\rho_g(ab) := a\alpha_g(b)$$
for any $g\in G$. We give the following equivariant version of Blackadar
equivalence. The ordinary one is recovered by letting $G$ act trivially on $A$. Semantically this amounts to dropping the word ``equivariant'' and the superscripts $G$ from the following definition. Recall from page \pageref{eq:her} that $A_a$ denotes the hereditary \Cs-subalgebra of $A$ generated by the positive element $a\in A$.

\begin{definition} Let $A$ be a $G$-algebra. Two positive elements $a,b\in A^G$
are said to be \emph{equivariantly Blackadar equivalent}, in symbols
$a\sim_{G,s}b$, if there exists $x\in A^G$ such that $A_a=A_{x^*x}$ and
$A_b=A_{xx^*}$.
\end{definition}

For open projections $p,q\in (A^G)\dd$ we give the following equivariant version
of Peligrad-Zsid\'o equivalence. For the ordinary definition of this relation the same comment as above applies.

\begin{definition}\label{def:equipz} Let $A$ be a $G$-algebra. Two $G$-invariant open projections
$p,q\in (A^G)\dd$ are said to be \emph{equivariantly PZ equivalent}, in symbols
$p\gpze q$, if they are PZ equivalent with respect to $A^G$, i.e.~if there
exists a partial isometry $v\in (A^G)\dd$ such that
	$$p=v^*v,\qquad q=vv^*,$$
and
	$$v(A^G)_p\subset A^G,\qquad v^*(A^G)_q\subset A^G.$$
\end{definition}

A direct application of Kaplanski's density theorem shows that one might as well
use the notation $A^G_p$ to denote either $(A^G)_p$ or $(A_p)^G$, since both
these hereditary subalgebras coincide.

\begin{proposition} Let $A$ be a $G$-algebra and let $p$ be a $G$-invariant open
projection. Then $(A^G)_p=(A_p)^G$.
\end{proposition}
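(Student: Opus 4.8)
The plan is to reduce the claimed identity to a nearly formal statement inside $A\dd$. First I would invoke the standard fact that the inclusion $\iota\colon A^G\hookrightarrow A$ induces an \emph{injective} normal $*$-homomorphism $\iota\dd\colon(A^G)\dd\to A\dd$ — injectivity holds because $\iota$ is isometric, so $\iota^*\colon A^*\to(A^G)^*$ is onto by Hahn--Banach — through which $(A^G)\dd$ is identified with a weak${}^*$-closed $*$-subalgebra of $A\dd$, compatibly with the canonical maps $A^G\hookrightarrow(A^G)\dd$ and $A^G\subseteq A\hookrightarrow A\dd$. Since $\iota\dd$ is normal, it carries an increasing net in $(A^G)_+$ with strong supremum $p$ to the same net in $A_+$, so $p$ is open in $A\dd$ as well; hence $A_p=pA\dd p\cap A$ in the sense recalled in the preliminaries, and, using $A^G\subseteq A$,
	$$(A_p)^G=A_p\cap A^G=pA\dd p\cap A^G,\qquad (A^G)_p=p(A^G)\dd p\cap A^G.$$

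The inclusion $(A^G)_p\subseteq(A_p)^G$ is then immediate, since $p(A^G)\dd p\subseteq pA\dd p$. For the converse I would take $x\in(A_p)^G$, so that $x\in A^G$ and $x=pxp$ in $A\dd$; forming the product $pxp$ \emph{inside} $(A^G)\dd$ (which makes sense as $p,x\in(A^G)\dd$) and applying $\iota\dd$, which is a $*$-homomorphism fixing both $p$ and $x$, gives $\iota\dd(pxp)=pxp=x$ in $A\dd$, so by injectivity of $\iota\dd$ we have $pxp=x$ already in $(A^G)\dd$. Hence $x\in p(A^G)\dd p\cap A^G=(A^G)_p$, and the two hereditary subalgebras coincide.

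I do not expect a genuine obstacle here, the statement being a compatibility between two natural constructions; the only point deserving care is exactly the bookkeeping above — that $(A^G)\dd$ embeds into $A\dd$ as a von Neumann subalgebra and that the support projection of a positive element of $A^G$ is computed identically in $(A^G)\dd$ and in $A\dd$, both consequences of $\iota\dd$ being an injective normal $*$-homomorphism. A more hands-on alternative runs through the faithful conditional expectation $E\colon A\to A^G$, $E(a)=\int_G\alpha_g(a)\,\de\mu(g)$ with $\mu$ the normalised Haar measure: one checks that $A_p$ is a $G$-invariant hereditary subalgebra (using $\alpha_g\dd(p)=p$), which in the present separable setting contains a $G$-invariant strictly positive element $e\in(A^G)_+$, so that $A_p=\overline{eAe}$ and $(A^G)_p=\overline{eA^Ge}$; then for $x\in(A_p)^G$, writing $x=\lim_n ec_ne$ with $c_n\in A$ and using $E(ec_ne)=eE(c_n)e$ (valid since $e\in A^G$) together with $E(x)=x$, one obtains $x=\lim_n eE(c_n)e\in\overline{eA^Ge}=(A^G)_p$, giving $(A_p)^G\subseteq(A^G)_p$ once more.
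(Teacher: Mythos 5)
Your argument is correct, but it proceeds by a genuinely different route than the paper's. The paper proves the nontrivial inclusion $(A_p)^G\subseteq(A^G)_p$ by an approximation-and-averaging argument: Kaplansky density is used to write $a\in(A_p)^G$ as a strong limit of a bounded sequence from $A_p$, the Haar average is applied termwise, and dominated convergence justifies interchanging $\int_G$ with the strong limit, exhibiting $a$ as a strong limit of elements $pb_np$ with $b_n\in A^G$. You instead do all the work at the level of biduals: once $(A^G)\dd$ is identified with a von Neumann subalgebra of $A\dd$ via the injective normal map $\iota\dd$ (your Hahn--Banach argument for injectivity is fine, and normality gives that $p$ stays open in $A\dd$), both sides become $p(A^G)\dd p\cap A^G$ and $pA\dd p\cap A^G$, and the inclusion reduces to the purely formal observation that $x=pxp$ in $A\dd$ forces $x=pxp$ in $(A^G)\dd$ by injectivity and multiplicativity of $\iota\dd$. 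What your main argument buys is generality and transparency: it uses neither compactness of $G$ nor the group at all beyond the identity $(A_p)^G=A_p\cap A^G$ — it really proves $B_p=B\cap pA\dd p$ for any \Cs-subalgebra $B\subseteq A$ and open projection $p\in B\dd$ — and it avoids the somewhat delicate interchange of $\SOT$-limit and integral. Your alternative via the conditional expectation $E$ is closer in spirit to the paper's averaging proof and is also sound, though the identity $(A^G)_p=\overline{eA^Ge}$ there quietly relies on the same bidual bookkeeping (that the support projection of $e$ computed in $(A^G)\dd$ is again $p$), so it is not really more elementary than your first argument.
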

\begin{proof} The inclusion $(A^G)_p\subset(A_p)^G$ is obvious. By Kaplanski's
density theorem, every element $a\in(A_p)^G$ is a strong limit of a sequence of
elements $\seq a\subset(A_p)_{\norm a}$. For any vectors $\xi,\eta\in pH_u$,
where $H_u$ denotes the universal Hilbert space of $A$, one has the estimate
	$$|(\xi,\alpha_g(a_n)\eta)|\leq\norm\xi\norm\eta\norm a,\qquad\forall
	n\in\IN,g\in G.$$
Therefore, by Lebesgue's dominated convergence theorem one can interchange the
order of limit and integral in
	$$a = \int_G\SOT\lim_{n\to\infty}p\alpha_g(a_n)p\ \de\mu(g)$$
to get
	$$a = \SOT\lim_{n\to\infty}p\left(\int_G\alpha_g(a_n)\de\mu(g)\right)p$$
with the average $\int_G\alpha_g(a_n)\de\mu(g)$ belonging to $A^G$ for any
$n\in\IN$. Hence, $a\in (A^G)_p$.
\end{proof}

The result that follows can be regarded as an equivariant extension of
Proposition 4.3 of \cite{ORT}.

\begin{proposition}\label{prop:equiequi} Let $A$ be a $G$-algebra and let $a$
and $b$ be $G$-invariant positive elements of $A$. The following are equivalent:
	\begin{enumerate}[i.]
		\item $a\sim_{G,s}b$;
		\item $E_a$ and $E_b$ are equivariantly isomorphic;
		\item $\exists x\in A^G$ such that $E_a=E_{x^*x}$ and $E_b = E_{xx^*}$;
		\item $p_a\sim_{G,PZ}p_b$.
	\end{enumerate}
\end{proposition}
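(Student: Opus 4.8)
The plan is to prove the four conditions equivalent along the cycle (i) $\Leftrightarrow$ (iii) $\Rightarrow$ (ii) $\Rightarrow$ (iv) $\Rightarrow$ (i), following the non-equivariant argument for \cite[Proposition 4.3]{ORT} and inserting $G$-invariance at every point where a partial isometry or a unitary is produced. The equivalence (i) $\Leftrightarrow$ (iii) is essentially formal: a hereditary \Cs-subalgebra of $A$ and a closed right ideal of $A$ are each determined by their support open projection, and for a positive element $c\in A$ the subalgebra $A_c=\overline{cAc}$ and the module $E_c=\overline{cA}$ have the same support projection $p_c$; hence for a fixed $x\in A^G$ one has $A_a=A_{x^*x}$ iff $p_a=p_{x^*x}$ iff $E_a=E_{x^*x}$ as closed subspaces of $A$, and since the $G$-actions on $E_a$ and on $E_{x^*x}$ are both the restriction of $\alpha\dd$, they agree as equivariant Hilbert modules. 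The same applies to the pair $b$, $xx^*$.

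For (iii) $\Rightarrow$ (ii) I would take the polar decomposition $x=v|x|$ in $A\dd$, so that $|x|=(x^*x)^{1/2}$ and $v$ is a partial isometry with $v^*v=p_{x^*x}$ and $vv^*=p_{xx^*}$. Because $x\in A^G$ and each $\alpha_g\dd$ is a $*$-automorphism, uniqueness of the polar decomposition yields $\alpha_g\dd(v)=v$ for all $g\in G$. As in \cite{ORT}, left multiplication by $v$ (written as $|x|c\mapsto xc$ on $E_{x^*x}=\overline{|x|A}$) is a unitary onto $E_{xx^*}=\overline{xA}$ with inverse left multiplication by $v^*$, and the $G$-invariance of $v$ makes it intertwine the restricted actions, so $E_a\cong_G E_b$.

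For (ii) $\Rightarrow$ (iv), a $G$-invariant unitary $u:E_a\to E_b$ is, as in the non-equivariant picture of \cite{ORT}, implemented by left multiplication by a partial isometry $v\in A\dd$ with $v^*v=p_a$ and $vv^*=p_b$. Writing out $u\circ\rho^a_g=\rho^b_g\circ u$ with $\rho^a_g=\alpha_g\dd|_{E_a}$ and $\rho^b_g=\alpha_g\dd|_{E_b}$, and using $\alpha_g\dd(E_a)=E_a$, one gets $(v-\alpha_g\dd(v))p_a=0$; since $v=vp_a$ and $p_a$ is $G$-invariant, this forces $\alpha_g\dd(v)=v$. Moreover $u$ maps $E_a=\overline{aA}$ onto $E_b=\overline{bA}\subseteq A$, so $v\,\overline{aA}\subseteq A$ and in particular $vA_a\subseteq A$; together with the $G$-invariance of $v$ this gives $v(A^G)_{p_a}=v(A_a)^G\subseteq A\cap(A\dd)^G=A^G$, and symmetrically $v^*(A^G)_{p_b}\subseteq A^G$, whence $p_a\gpze p_b$. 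Finally, for (iv) $\Rightarrow$ (i), let $v$ be a partial isometry witnessing $p_a\gpze p_b$ and set $x:=va^{1/2}$; since $a^{1/2}\in A_a\cap A^G=(A_a)^G=(A^G)_{p_a}$ by the preceding proposition, $x\in A^G$, and then $x^*x=a^{1/2}v^*va^{1/2}=a^{1/2}p_aa^{1/2}=a$ while $xx^*=vav^*$ is a positive element of $A$ with range projection $vp_av^*=(vv^*)(vv^*)=p_b$, so $A_{x^*x}=A_a$ and $A_{xx^*}=A_b$, i.e.\ $a\sim_{G,s}b$.

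The main obstacle is the step (ii) $\Rightarrow$ (iv): one must guarantee that the partial isometry implementing an equivariant module isomorphism is not only $G$-invariant but also maps the \emph{fixed-point} hereditary subalgebra $(A^G)_{p_a}$ into $A^G$ and not merely into $(A\dd)^G$. Both points are taken care of by the identity $(A^G)_{p_a}=(A_a)^G$ proved just before this proposition, so beyond this no analytic input is needed and the remaining checks reduce to the bookkeeping already present in the proof of \cite[Proposition 4.3]{ORT}.
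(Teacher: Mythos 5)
Your proof is correct, and it uses the same ingredients as the paper's (polar decomposition plus $G$-invariance, the identity $(A^G)_{p_a}=(A_a)^G$ from the proposition immediately preceding this one, and the correspondence between hereditary subalgebras, right ideals and support projections), but it organizes the equivalences differently. The paper proves $i.\Leftrightarrow iii.$ and $ii.\Leftrightarrow iii.$ separately, and then treats $i.\Leftrightarrow iv.$ on its own: the forward direction by invoking \cite[Theorem 1.4]{pz} inside $A^G$, the backward direction by the argument of \cite[Proposition 4.3]{ORT}, namely that $vav^*\in A^G$ has support projection $p_b$ --- essentially your $x=va^{1/2}$ computation, which you spell out more explicitly. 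You instead close a single cycle $iii.\Rightarrow ii.\Rightarrow iv.\Rightarrow i.$, and your new implication $ii.\Rightarrow iv.$ verifies the Peligrad--Zsid\'o conditions by hand: the unitary $u$ is left multiplication by the polar part $v$ of $u(a)$, so this step is in effect the paper's $ii.\Rightarrow iii.$ (set $x=ua$) composed with its polar-decomposition argument, and what it buys you is independence from the black-box citation of \cite{pz}. One point worth making explicit (the paper glosses over it too): Definition \ref{def:equipz} asks for $v\in(A^G)\dd$, while your argument a priori produces $v\in(A\dd)^G$; this is harmless because $v$ is the polar part of $u(a)$, which lies in $A^G$ (by $G$-invariance of $u$ and $a$), so $v$ belongs to the weak$^*$ closure of $A^G$ in $A\dd$, i.e.\ to the canonical copy of $(A^G)\dd$.
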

\begin{proof} $i.\Rightarrow iv.$ As a direct consequence of \cite[Theorem
1.4]{pz} one has that $p_{x^*x}\sim_{G,PZ}p_{xx^*}$, since this is true for
$p_{x^*x}\sim_{PZ}p_{xx^*}$ in $A^G$. Furthermore, $A_a = A_b$, with $a,b\in
A^G$, implies that $p_a=p_b$, with $p_a$ and $p_b$ in $(A^G)\dd$.

$iv.\Rightarrow i.$ By the arguments of \cite[Proposition 4.3]{ORT}, one sees
that, if $v$ denotes the partial isometry that witnesses the $PZ$ equivalence of
$p_a$ and $p_b$, then $vav^*\in A^G$ has the same support projection of $b$,
i.e. $p_b$, in $A^G$.

$ii.\Rightarrow iii.$ Let $u$ be the map that implements the equivariant
isomorphism and set $x:=ua$. Then $E_{xx^*}=\overline{xA} = \overline{uaA} =
E_b$ and
	$$\sigma_g(x) = (\sigma_g\circ u\circ\rho_g^{-1}\circ\rho_g)(a) = u\rho_g(a) =
	ua = x,\qquad\forall g\in G,$$
	therefore $x\in A^G$. Furthermore, $x^*x = a^2$ since $u$ is isometric, so
	that $E_a = E_{x^*x}$.
	
	$iii.\Rightarrow ii.$ Let $x=v|x|$ be the polar decomposition of $x$, with
	$v\in (A^G)\dd$. Then
	\begin{align*}
		v\rho_g(|x|b) &= v|x|\alpha_g(b)\\
			&= |x^*|v\alpha_g(b)\\
			&= \sigma_g(|x^*|v)\alpha_g(xb)\\
			&= \sigma_g(|x^*|vb)\\
			&= \sigma_g(v|x|b)
	\end{align*}
	for any $b\in A$, whence $v\in B(E_{x^*x},E_{xx^*})^G$ is the sought
	equivariant isomorphism.
	
	$i.\Leftrightarrow iii.$ This is a restatement of the definitions involved and
	based on the one-to-one correspondence between hereditary subalgebras and
	right ideals.
\end{proof}

The following is an equivariant version of the compact containment relation for
open projections.

\begin{definition}\label{def:equicc} Let $A$ be a $G$-algebra, and let $p,q\in (A^G)\dd$ be $G$-invariant open projections. We say that $q$ is compactly contained in $p$ (in symbols $q\cc_G p$) if there exists $e\in A_p^G$ such that $\bar qe = \bar q$, where $\bar q$ denotes the closure of $q$.
\end{definition}

With both Definition \ref{def:equipz} and Definition \ref{def:equicc} one can define the Cuntz comparison of two $G$-invariant open projections in the usual way of \cite{cei} and \cite{ORT}.

\begin{definition} Let $(A,G,\alpha)$ be a $G$-algebra and let $p,q$ be $G$-invariant open projections from $(A^G)\dd$. We shall say that $p$ is equivariantly Cuntz subequivalent to $q$ ($p\precsim_Gq$ in symbols) if
	$$\forall p'\cc_G p\qquad\exists q'\cc_G q\qquad|\qquad p'\gpze q'.$$ 
\end{definition}
Hence, two $G$-invariant open projections $p$ and $q$ are said to be Cuntz equivalent if both $p\precsim_Gq$ and $q\precsim_G p$ hold.

The proposition below can be regarded as an equivariant extension of part of the
results established in \cite[Proposition 4.10]{ORT}.

\begin{proposition}\label{prop:equicc} Let $A$ be a $G$-algebra and let $a,b$ be
$G$-invariant positive elements. Then $E_a\cc_G E_b$ if and only if $p_a\cc_G
p_b$.
\end{proposition}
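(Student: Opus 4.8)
The plan is to transport the statement into the C*-algebra $A_b = \overline{bAb}$ through the identification $K(E_b) \cong A_b$ and then run the elementary spectral-projection argument behind \cite[Proposition 4.10]{ORT}, checking $G$-invariance at each step. First I would record the equivariant form of the identification: since $a$ and $b$ are $G$-invariant, for $x = \lim b d_n \in E_b$ one has $\rho_g(x) = \lim b\alpha_g(d_n) = \lim\alpha_g(bd_n) = \alpha_g(x)$, so the module action on $E_b$ is merely the restriction of $\alpha$, and likewise on $E_a$; in particular $E_a$ is automatically an equivariant $A$-submodule of $E_b$ whenever $a \in A_b$. Under the $*$-isomorphism $K(E_b) \cong A_b$ taking left multiplication by $x$ to $x$, the induced action $T \mapsto \rho_g T \rho_g^{-1}$ becomes $x \mapsto \alpha_g(x)$, so $K(E_b)^G \cong (A_b)^G$, which by the identity $(A^G)_{p_b} = (A_{p_b})^G$ proved above (together with $A_b = A_{p_b}$) is $A^G_{p_b}$.

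Next I would translate the two compact-containment conditions into statements about a single positive contraction. A $G$-invariant positive contraction $c \in K(E_b)^G$ restricts to the identity on $E_a = \overline{aA}$ if and only if $ca = a$ --- forward by evaluation at $a$, backward since $c(ad) = ad$ for all $d$ --- so $E_a \cc_G E_b$ says exactly that there is a positive contraction $c \in A^G_{p_b}$ with $ca = a$, equivalently $c\,p_a = p_a$. On the other hand Definition \ref{def:equicc} says $p_a \cc_G p_b$ means there is a positive contraction $e \in A^G_{p_b}$ with $\overline{p_a}\,e = \overline{p_a}$; since $p_a \leq \overline{p_a}$ this forces $p_a e = (p_a\overline{p_a})e = p_a\overline{p_a} = p_a$, hence $ea = a$. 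Both relations therefore concern an element of $A^G_{p_b}$, and everything comes down to interchanging $c\,p_a = p_a$ and $c\,\overline{p_a} = \overline{p_a}$ for a positive contraction $c$.

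The main point --- indeed the only place a genuine argument is needed --- is this interchange. If $c$ is a positive contraction with $c\,p_a = p_a$, then $c^n p_a = p_a$ for every $n$, so the eigenvalue-one spectral projection $q := \SOT\lim_{n\to\infty} c^n$ satisfies $q \geq p_a$; as $1-c^n$ increases to $1-q$ in the strong topology with all terms in $\tilde A_+$, the projection $q$ is closed, hence $q \geq \overline{p_a}$ and $c\,\overline{p_a} = c\,q\,\overline{p_a} = q\,\overline{p_a} = \overline{p_a}$; the reverse implication is immediate from $p_a \leq \overline{p_a}$. Feeding this into the previous paragraph gives both directions: from $E_a \cc_G E_b$ the witnessing $c$ satisfies $c\,\overline{p_a} = \overline{p_a}$, so $p_a \cc_G p_b$; conversely, from $p_a \cc_G p_b$ one gets $e \in A^G_{p_b}$ with $ea = a$, whence $a = eae \in \overline{bAb} = A_b$ --- so that $E_a$ is a submodule of $E_b$ and the module containment makes sense --- and $e$, viewed in $K(E_b)^G$, witnesses $E_a \cc_G E_b$; here $\overline{p_a}$ is $G$-invariant because $p_a$ is. I expect the only real care needed is bookkeeping: keeping the witnessing contraction inside the fixed-point algebra $A^G_{p_b}$ and checking that $q$ is genuinely a closed projection, so that it dominates the closure $\overline{p_a}$.
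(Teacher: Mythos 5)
Your argument is correct and follows the same route as the paper: identify $K(E_b)$ with $A_b$ equivariantly (so that $K(E_b)^G\cong (A_b)^G=A^G_{p_b}$) and reduce both containments to the existence of a $G$-invariant positive contraction $c$ acting as the identity on $p_a$, respectively on $\overline{p_a}$. The paper's proof is essentially a one-line reduction to \cite[Proposition 4.10]{ORT}; you additionally spell out the key interchange between $cp_a=p_a$ and $c\overline{p_a}=\overline{p_a}$ via the closed spectral projection $q=\SOT\lim c^n$, which is exactly the content being imported.
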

\begin{proof} Identify $K(E_b)$ with $A_b$ and observe that the rank-1 operator
$\theta_{bd,bc}$ is sent to the element $bdc^*b$ for any $c,d\in A$. Hence, the action $\Ad_{\rho_g}$ on $K(E_b)$ coincides with the action of $\alpha_g$ on $A_b$.
Therefore, if $e\in K(E_b)^G$ is such that $e|_{E_a}=\id_{E_a}$, then $e\in
A_b^G$ is such that $\overline{p_a}e=\overline{p_a}$.
\end{proof}

Propositions \ref{prop:equiequi} and \ref{prop:equicc} can now be used to
\emph{translate} the module picture of the previous section into the open
projection picture for the equivariant Cuntz semigroup.

\begin{theorem} Let $G$ be a second countable compact group. Then $\Cu^G(A)\cong
P_o(((A\otimes K_G)^G)\dd)/\sim_G$.
\end{theorem}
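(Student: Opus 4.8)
The plan is to run, in the $G$-equivariant setting, the chain of identifications used in \cite{ORT}: first pass from the positive-element picture of $\Cu^G(A)$ to the module picture $\Cu^G_H(A)$ of the previous subsection, and then translate the latter into open projections by sending an equivariant Hilbert module $E_a$ to the support projection $p_a$. Throughout write $B := A\otimes K_G$, which is separable because $A$ is separable and $G$ is second countable, so that by Definition \ref{def:equiCu} we have $\Cu^G(A) = B^G_+/\sim_G$; recall also from \cite[Proposition 4.13]{gs} (this is where second countability of $G$ enters) the natural isomorphism $\Cu^G(A)\cong\Cu^G_H(A)$, which identifies the class of $a\in B^G_+$ with the class of the countably generated equivariant Hilbert module $E_a$ it generates.

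Next I would introduce the candidate map. For $a\in B^G_+$ the support projection $p_a = \SOT\lim_{n\to\infty}a^{\frac1n}$ is the strong limit of an increasing sequence from $B^G$, hence a $G$-invariant open projection, i.e.\ an element of $P_o((B^G)\dd)$; this gives a map $B^G_+\to P_o((B^G)\dd)$, $a\mapsto p_a$. It is surjective, since $B^G$ is separable, so every hereditary subalgebra of $B^G$ is $\sigma$-unital and thus every $G$-invariant open projection is the support projection of a (strictly positive element of) the corresponding hereditary subalgebra. Because every class in $\Cu^G_H(A)$ is represented by some $E_a$ with $a\in B^G_+$, and because Proposition \ref{prop:equiequi} (the equivalence of $ii.$ and $iv.$) shows that $E_a\cong_G E_b$ forces $p_a\gpze p_b$, the assignment $[E_a]\mapsto[p_a]$ is a well-defined surjection $\Cu^G_H(A)\to P_o((B^G)\dd)/\sim_G$.

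It then remains to check that this map and its inverse preserve the order, and that the map is additive. For the order one shows $E_a\precsim_G E_b$ if and only if $p_a\precsim_G p_b$ by transcribing the argument of \cite{ORT}: the submodules $E_{(a-\eps)_+}$, $\eps>0$, are cofinal among the equivariant submodules compactly contained in $E_a$, and likewise the open subprojections $p_{(a-\eps)_+}$ are cofinal among the $G$-invariant open projections compactly contained in $p_a$, so that Cuntz comparison in either picture reduces to a comparison of the $\eps$-pieces; since $(a-\eps)_+\in B^G_+$ whenever $a\in B^G_+$, and since by Lemma \ref{lem:modmean} every equivariant submodule compactly contained in $E_b$ is equivariantly isomorphic to some $E_{b'}$ with $b'\in B^G_+$, Propositions \ref{prop:equicc} and \ref{prop:equiequi} match compact containment and equivariant isomorphism of these modules with, respectively, compact containment and $\gpze$-equivalence of the corresponding support projections, and the two comparison relations coincide. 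Additivity is immediate from $p_{a\oplus b}=p_a\oplus p_b$, the equivariant isomorphism $M_2(B)^G\cong B^G$ (and its counterpart $(M_2(B^G))\dd\cong(B^G)\dd$), and the fact that orthogonal direct sums of open projections represent the monoid operation on $P_o((B^G)\dd)/\sim_G$. Composing with the isomorphism $\Cu^G(A)\cong\Cu^G_H(A)$ then yields the asserted order-isomorphism of partially ordered Abelian monoids $\Cu^G(A)\cong P_o(((A\otimes K_G)^G)\dd)/\sim_G$.

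The part I expect to require the most care is the order-preservation step, namely verifying that the cofinality and reduction arguments of \cite{cei,ORT} genuinely transfer with every intermediate object — submodule, positive element, open projection, implementing partial isometry — taken $G$-invariant. The single new ingredient removing that difficulty is Lemma \ref{lem:modmean}, which guarantees that a compactly contained equivariant submodule, a priori only generated by some non-invariant positive element, is equivariantly isomorphic to one generated by a $G$-invariant element; once this is in hand, the remaining verifications are routine transcriptions of the non-equivariant proofs.
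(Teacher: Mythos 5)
Your proposal is correct and follows essentially the same route as the paper: both pass through the module picture $\Cu^G_H(A)$ and translate it into open projections using Lemma \ref{lem:modmean} together with Propositions \ref{prop:equiequi} and \ref{prop:equicc}. The only cosmetic difference is that the paper matches the Cuntz comparison relations by exhibiting a direct bijection between the sets of compactly contained equivariant submodules of $E_a$ and of $G$-invariant open subprojections of $p_a$, whereas you argue via cofinality of the $(a-\eps)_+$-cutdowns; both reductions are standard and rest on the same three ingredients.
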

\begin{proof} By Proposition \ref{prop:equiequi}, equivariant isomorphism of
modules coincides with equivariant PZ equivalence of the corresponding
$G$-invariant open projections, while by Proposition \ref{prop:equicc} compact containment
of equivariant modules corresponds to compact containment of $G$-invariant open
projections. Hence, it is enough to show that there exists a bijection between
	$$E_a^{\cc_G} := \{X\ |\ X\cc_G E_a\}$$
and
	$$p_a^{\cc_G} := \{p\ |\ p\cc_G p_a\}$$
for any positive element $a\in (A\otimes K_G)^G$. To this end, suppose that $X\cc_GE_a$. Since
$A\otimes K_G$ is a stable \Cs-algebra, there exists $a'\in (A\otimes K_G)_+$ such
that $X=\overline{a'(A\otimes K_G)}$, and by Lemma \ref{lem:modmean} one can
assume that $a'$ is $G$-invariant. By Proposition \ref{prop:equicc},
$E_{a'}\cc_GE_a$ is equivalent to $p_{a'}\cc_G p_a$, so that one can associate
the $G$-invariant projection $p_{a'}$ to the equivariant module $X$. To see that
this correspondence is well-defined and independent from the choice of $a'$,
observe that, if $a''\in A\otimes K_G$ is another $G$-invariant positive element
such that $X=\overline{a''(A\otimes K_G)}$, then the hereditary subalgebra
generated by $a''$ is the same as that generated by $a'$ and therefore
$p_{a''}=p_{a'}$. Conversely, for every $p\cc_G p_a$ there exists $a'\in (A\otimes K_G)^G$ such that $p=p_{a'}$, and by Proposition \ref{prop:equicc} again this
implies that $E_{a'}\cc_G E_a$. Any other choice of a positive element that gives
the same open projection leads to the same hereditary subalgabra and hence to
the same module, whence it follows that the correspondence $p\mapsto E_{a'}$ is
well-defined and independent from the choice of $a'$. It is now immediate to
verify that this correspondence is the inverse of the one above and therefore it
provides a bijection between $p_a^{\cc_G}$ and $E_a^{\cc_G}$.
\end{proof}

	\section{\label{sec:classification}Classification of Actions}

In this section we show how to use the equivariant extension of the bivariant Cuntz semigroup defined in this paper in order to retrieve some classification results for certain actions by compact groups over certain \Cs-algebras. In particular we show how to recover the classical result of Handelman and Rossmann \cite{hr}, and the more recent one of Gardella and Santiago \cite{gs}.

With the aim of capturing the right notion of equivalence, which incorporates the
scale conditions of part (2) of \cite[Theorem 8.4]{gs}, we give the following
definition as the equivariant analogue of the \emph{strictly invertible} elements defined in \cite{bcs}.

\begin{definition} Let $A$ and $B$ be $G$-algebras. An
element $\Phi\in\Cu^G(A,B)$ is said to be \emph{strictly invertible} if there
exist equivariant c.p.c.~order zero maps $\phi:A\to B$ and $\psi:B\to A$ such
that
	\begin{enumerate}[i.]
		\item $[\phi\otimes\id_{K_G}] = \Phi$;
		\item $\psi\circ\phi\sim_G\id_A$ and $\phi\circ\psi\sim_G\id_B$.
	\end{enumerate}
\end{definition}

As in the case of the non-equivariant theory of the bivariant Cuntz semigroup of \cite{bcs}, one can regard strictly invertible elements as invertible elements of $\Cu^G(A,B)$, with the obvious meaning of invertibility, that come from the \emph{scale} of $\Cu^G(A,B)$, where the latter is defined as follows.

\begin{definition} Let $A$ and $B$ be $G$-algebras. The scale of $\Cu^G(A,B)$ is the set of classes
	$$\Sigma(\Cu^G(A,B)) := \{[\phi\otimes\id_{K_G}]\in\Cu^G(A,B)\ |\ \phi:A\to B \text{ equiv.~c.p.c.~order zero}\}.$$
\end{definition}

An isomorphism criterion for crossed products follows from the following result about the map $j_G$ of Section \ref{ssec:cp}.

\begin{proposition} Let $A$ and $B$ be $G$-algebras. If $\Phi\in\Cu^G(A,B)$ is a strictly invertible element then so is $j_G(\Phi)\in\Cu(A\rtimes G, B\rtimes G)$, in the sense of \cite{bcs}.
\end{proposition}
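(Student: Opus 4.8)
The plan is to produce explicit witnesses for the strict invertibility of $j_G(\Phi)$ by crossing the witnesses of $\Phi$ with $G$. Fix equivariant c.p.c.\ order zero maps $\phi\colon A\to B$ and $\psi\colon B\to A$ realising the strict invertibility of $\Phi$, so that $\Phi=[\phi\otimes\id_{K_G}]$, $\psi\circ\phi\sim_G\id_A$ and $\phi\circ\psi\sim_G\id_B$. By the first proposition of Section~\ref{ssec:cp}, the induced maps $\phi_\rtimes\colon A\rtimes G\to B\rtimes G$ and $\psi_\rtimes\colon B\rtimes G\to A\rtimes G$ are c.p.c.\ order zero; I claim they witness the strict invertibility of $j_G(\Phi)$ in the sense of \cite{bcs}.

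For the composition relations, a direct computation at the level of $L^1(G,A)$ (or the compatibility part of Theorem~\ref{th:jG}) shows that the assignment $\eta\mapsto\eta_\rtimes$ is compatible with composition, so $\psi_\rtimes\circ\phi_\rtimes=(\psi\circ\phi)_\rtimes$ and $\phi_\rtimes\circ\psi_\rtimes=(\phi\circ\psi)_\rtimes$. Applying Proposition~\ref{prop:rtimes} to each of the two subequivalences comprising $\psi\circ\phi\sim_G\id_A$, and using $(\id_A)_\rtimes=\id_{A\rtimes G}$ (recorded before Theorem~\ref{th:jG}), we obtain
$$\psi_\rtimes\circ\phi_\rtimes=(\psi\circ\phi)_\rtimes\sim(\id_A)_\rtimes=\id_{A\rtimes G},$$
and symmetrically $\phi_\rtimes\circ\psi_\rtimes\sim\id_{B\rtimes G}$.

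It then remains to identify $j_G(\Phi)$ with the scale element $[\phi_\rtimes\otimes\id_K]$. By definition $j_G(\Phi)=[(\phi\otimes\id_{K_G})_\rtimes]$. Since the action on $A\otimes K_G=A\otimes K(L^2(G))\otimes K$ is $(\alpha\otimes\Ad\lambda_G)\otimes\id_K$, the untwisting isomorphism $(A\otimes K(L^2(G)))\rtimes_{\alpha\otimes\Ad\lambda_G}G\cong(A\rtimes G)\otimes K(L^2(G))$ — the same one underlying the Green--Julg-type identifications in Example~\ref{ex:equics} — yields an $A$-natural isomorphism $(A\otimes K_G)\rtimes G\cong(A\rtimes G)\otimes K_G$. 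By naturality, under this isomorphism $(\phi\otimes\id_{K_G})_\rtimes$ corresponds to $\phi_\rtimes\otimes\id_{K_G}$, hence, after the plain \Cs-isomorphism $K_G\cong K$, to $\phi_\rtimes\otimes\id_K$. Therefore $j_G(\Phi)=[\phi_\rtimes\otimes\id_K]\in\Sigma(\Cu(A\rtimes G,B\rtimes G))$, and together with the previous paragraph this shows that $\phi_\rtimes$ and $\psi_\rtimes$ witness the strict invertibility of $j_G(\Phi)$.

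The routine parts are the composition bookkeeping and the two invocations of Proposition~\ref{prop:rtimes}; the one genuinely technical step is the last one, namely making precise and natural the identification $(A\otimes K_G)\rtimes G\cong(A\rtimes G)\otimes K$ under which $j_G$ visibly carries the scale of $\Cu^G(A,B)$ into the scale of $\Cu(A\rtimes G,B\rtimes G)$. Once that is in place the argument is purely formal.
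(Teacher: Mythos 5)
Your proof is correct and follows essentially the same route as the paper's: push the witnesses $\phi,\psi$ forward to $\phi_\rtimes,\psi_\rtimes$, use Proposition \ref{prop:rtimes} together with $(\id_A)_\rtimes=\id_{A\rtimes G}$ for the two composition relations, and identify $(\phi\otimes\id_{K_G})_\rtimes$ with $\phi_\rtimes\otimes\id_K$ to place $j_G(\Phi)$ in the scale. The paper asserts this last identification in a single line, whereas you justify it via the untwisting isomorphism $(A\otimes K_G)\rtimes G\cong(A\rtimes G)\otimes K$ — a welcome elaboration, since that is indeed the only non-formal step.
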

\begin{proof} Observe that $(\phi\otimes \id_{K_G})_\rtimes$ can be identified with $\phi_\rtimes\otimes\id_K$ for any equivariant c.p.c.~order zero map $\phi:A\to B$. Since $\Phi$ is strictly invertible in $\Cu^G(A,B)$, there are equivariant c.p.c.~order zero maps $\phi:A\to B$ and $\psi:B\to A$ such that $[\phi\otimes\id_{K_G}] = \Phi$ and $\psi\circ\psi\sim_G\id_A$, $\phi\circ\psi\sim_G\id_B$. From Theorem \ref{th:jG} it follows that
	$$j_G(\Phi) = [(\phi\otimes\id_{K_G})_\rtimes]$$
whereas from Proposition \ref{prop:rtimes} one has that
	$$\psi_\rtimes\circ\phi_\rtimes \sim \id_{A\rtimes G}$$
and
	$$\phi_\rtimes\circ\psi_\rtimes \sim \id_{B\rtimes G}.$$
Hence $j_G(\Phi)\in\Cu(A\rtimes G, B\rtimes G)$ is a strictly invertible element.
\end{proof}

As an immediate consequence of the above proposition we then have the following isomorphism criterion for crossed products by finite groups.

\begin{theorem} Let $A$ and $B$ be unital and stably finite $G$-algebras, $G$ finite. If there is a strictly invertible element in $\Cu^G(A,B)$ then $A\rtimes G$ and $B\rtimes G$ are isomorphic.
\end{theorem}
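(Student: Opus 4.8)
The plan is to deduce this from the non-equivariant classification theorem of \cite{bcs} by pushing the hypothesis forward along the homomorphism $j_G$ of Section \ref{ssec:cp}. Concretely, suppose $\Phi\in\Cu^G(A,B)$ is strictly invertible. By the proposition just proved, $j_G(\Phi)\in\Cu(A\rtimes G,B\rtimes G)$ is then a strictly invertible element in the sense of \cite{bcs}. So $A\rtimes G$ and $B\rtimes G$ are connected by a strictly invertible class in the ordinary bivariant Cuntz semigroup, and it remains only to check that these two \Cs-algebras meet the standing hypotheses of the main classification theorem of \cite{bcs}.

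Thus the next step is to verify that $A\rtimes G$ and $B\rtimes G$ are unital and stably finite. Unitality is clear, since the crossed product of a unital \Cs-algebra by a finite (indeed compact) group is again unital. For stable finiteness I would use that $G$ is finite: the left regular representation realises $A\rtimes G$ as a unital \Cs-subalgebra of $M_{|G|}(A)$, and since $A$ is stably finite so is $M_{|G|}(A)$, hence so is any unital \Cs-subalgebra of it; the same applies to $B\rtimes G$. (Alternatively one may simply invoke the stable finiteness of crossed products of stably finite $G$-algebras by compact groups recorded in the proof of Lemma \ref{lem:vinjects}.) With unitality and stable finiteness in hand, applying the classification theorem of \cite{bcs} to the pair $(A\rtimes G,B\rtimes G)$ and the class $j_G(\Phi)$ produces a $*$-isomorphism $A\rtimes G\cong B\rtimes G$, as desired.

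The argument is essentially formal once the previous proposition is available, so the only genuine work — and the only place the finiteness assumption on $G$ is used — lies in confirming that stable finiteness survives the passage to crossed products. For finite $G$ this is the routine matrix-embedding observation above; the general compact case would require the more delicate statement alluded to in Section \ref{sec:equics}, which is why the theorem is stated for finite groups. No further obstacle is anticipated.
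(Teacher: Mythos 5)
Your proof follows exactly the paper's argument: push the strictly invertible class forward along $j_G$ using the preceding proposition, observe that $A\rtimes G$ and $B\rtimes G$ are unital and stably finite, and invoke the classification theorem of \cite{bcs}. The only difference is that you supply the justification for stable finiteness of the crossed product (the unital embedding into $M_{|G|}(A)$), which the paper merely asserts; this is a correct and welcome addition.
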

\begin{proof} If $\Phi\in\Cu^G(A,B)$ is a strictly invertible element, then $j_G(\Phi)$ is strictly invertible in $\Cu(A\rtimes G, B\rtimes G)$. Furthermore, $A\rtimes G$ and $B\rtimes G$ are unital and stably finite, and therefore the classification theorem of \cite{bcs} applies.
\end{proof}

For any pair of $G$-algebras $A$ and $B$, it is easy to see that there is a well-defined map $\sigma_G$ from the scale of $\Cu^G(A,B)$ to the scale of $\Cu(A^G,B^G)$, which is given by
	$$\sigma_G([\phi\otimes\id_{K_G}]) := [\phi|_{A^G}\otimes\id_K].$$
In particular, it follows that any strictly invertible element of $\Cu^G(A,B)$ yields a strictly invertible element of $\Cu(A^G,B^G)$. Hence

\begin{theorem} Let $A$ and $B$ be unital and stably finite $G$-algebras. If there is a strictly invertible element in $\Cu^G(A,B)$, then the fixed point algebras $A^G$ and $B^G$ are isomorphic.
\end{theorem}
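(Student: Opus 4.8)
The plan is to restrict everything in sight to the fixed point algebras and then invoke the non-equivariant classification theorem of \cite{bcs}. The device is the map $\sigma_G$ from the scale of $\Cu^G(A,B)$ to that of $\Cu(A^G,B^G)$ defined above by $\sigma_G([\phi\otimes\id_{K_G}]) := [\phi|_{A^G}\otimes\id_K]$. The observation that makes it work is that an equivariant c.p.c.\ order zero map $\phi:A\to B$ carries $A^G$ into $B^G$ (if $a\in A^G$ then $\beta_g(\phi(a)) = \phi(\alpha_g(a)) = \phi(a)$ for all $g\in G$), and the restriction $\phi|_{A^G}:A^G\to B^G$ is again c.p.c.\ order zero; here unitality of $A$ and $B$ together with compactness of $G$ ensures $1_A\in A^G$, $1_B\in B^G$, so contractivity is preserved.

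First I would check that $\sigma_G$ sends strictly invertible elements to strictly invertible elements. Let $\Phi\in\Cu^G(A,B)$ be strictly invertible, witnessed by equivariant c.p.c.\ order zero maps $\phi:A\to B$, $\psi:B\to A$ with $[\phi\otimes\id_{K_G}]=\Phi$, $\psi\circ\phi\sim_G\id_A$ and $\phi\circ\psi\sim_G\id_B$. Put $\phi':=\phi|_{A^G}$ and $\psi':=\psi|_{B^G}$, so that $\psi'\circ\phi'=(\psi\circ\phi)|_{A^G}$ and $\phi'\circ\psi'=(\phi\circ\psi)|_{B^G}$. From $\psi\circ\phi\sim_G\id_A$ there are $G$-invariant sequences, i.e.\ sequences of elements of $A^G$, witnessing $\psi\circ\phi\precsim_G\id_A$ and $\id_A\precsim_G\psi\circ\phi$; evaluating on elements of $A^G$ and using that these sequences already lie in $A^G$, one reads off $\psi'\circ\phi'\sim\id_{A^G}$ as an \emph{ordinary} (non-equivariant) Cuntz equivalence inside $A^G$, and likewise $\phi'\circ\psi'\sim\id_{B^G}$ inside $B^G$. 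Since $\sigma_G(\Phi)=[\phi'\otimes\id_K]$ by construction, this is precisely the assertion that $\sigma_G(\Phi)$ is a strictly invertible element of $\Cu(A^G,B^G)$ in the sense of \cite{bcs}.

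It then remains to observe that $A^G$ and $B^G$ are unital and stably finite \Cs-algebras: the (unique) unit of a unital $G$-algebra is fixed by every $\alpha_g$, so $A^G,B^G$ are unital with the same units as $A,B$; and stable finiteness of the fixed point algebra of a stably finite $G$-algebra under a compact group has already been used above (cf.\ the proof of Lemma \ref{lem:vinjects})---indeed, for each $n$ the algebra $M_n(A^G)$ is a unital \Cs-subalgebra of the finite algebra $M_n(A)$ sharing its unit, hence finite. Applying the main classification theorem of \cite{bcs} to $A^G$ and $B^G$, the strictly invertible element $\sigma_G(\Phi)\in\Cu(A^G,B^G)$ produces a $*$-isomorphism $A^G\cong B^G$, which is what we want. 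The one point demanding care is the implication $\psi\circ\phi\sim_G\id_A\Rightarrow\psi'\circ\phi'\sim\id_{A^G}$: it is essential here that the equivariant relation $\sim_G$ is, by definition, witnessed by sequences of $G$-invariant elements, so that upon restriction one obtains a genuine Cuntz equivalence \emph{in} $A^G$ rather than merely a relation between maps into $A$; everything else is a routine restriction argument.
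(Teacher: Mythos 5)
Your proof is correct and follows essentially the same route as the paper: push the strictly invertible element through $\sigma_G$ to obtain a strictly invertible element of $\Cu(A^G,B^G)$, check that $A^G$ and $B^G$ are unital and stably finite, and invoke the classification theorem of \cite{bcs}. The paper states the preservation of strict invertibility under $\sigma_G$ without proof; your observation that the witnessing sequences for $\sim_G$ already lie in the fixed point algebras, so that restriction yields genuine Cuntz equivalences inside $A^G$ and $B^G$, is exactly the detail that justifies it.
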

\begin{proof} By the above considerations, if $\Phi\in\Cu^G(A,B)$ is strictly invertible, then so is $\sigma_G(\Phi)\in\Cu(A^G,B^G)$. Since $A$ and $B$ are unital, $A^G$ and $B^G$ are unital and stably finite, and the classification theorem of \cite{bcs} applies.
\end{proof}

As in the standard theory of the bivariant Cuntz semigroup, we have the following
result for the equivariant setting.

\begin{proposition}\label{prop:cpctohom} Let $A,B$ be unital and
stably finite $G$-algebras. If $\phi:A\to B$ and $\psi:B\to A$ are two equivariant c.p.c.
order zero maps such that $\psi\circ\phi\sim_G \id_A$ and $\phi\circ\psi\sim_G\id_B$
then there are equivariant unital $*$-homomorphisms $\pi_\phi:A\to B$ and $\pi_\psi:B\to A$
such that
	\begin{enumerate}[i.]
		\item $[\pi_\phi] = [\phi]$ and $[\pi_\psi] = [\psi]$;
		\item $\pi_\psi\circ\pi_\phi \sim_G \id_A$ and
		$\pi_\phi\circ\pi_\psi \sim_G \id_B$.
	\end{enumerate}
\end{proposition}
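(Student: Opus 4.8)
The plan is to let $\pi_\phi$ and $\pi_\psi$ be exactly the supporting $*$-homomorphisms attached to $\phi$ and $\psi$ by Theorem~\ref{thm:equiwz}, so that $\phi(a)=h_\phi\pi_\phi(a)$ and $\psi(b)=h_\psi\pi_\psi(b)$, and then to check that the hypotheses force these maps to take values in $B$ and $A$, to be unital, and to be equivariantly Cuntz equivalent to $\phi$ and $\psi$. Because $A$ and $B$ are unital and the $\pi$'s are non-degenerate, $\pi_\phi(1_A)=1_{\M(C_\phi)}$ and $\pi_\psi(1_B)=1_{\M(C_\psi)}$, so $h_\phi=\phi(1_A)$ and $h_\psi=\psi(1_B)$ are $G$-invariant positive contractions lying in $C_\phi$ and $C_\psi$ respectively. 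Everything will then reduce to showing that $h_\phi$ is invertible in $B$ and $h_\psi$ is invertible in $A$, and I expect this to be the only substantial step.

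To obtain invertibility of $h_\phi$ I would unpack $\phi\circ\psi\sim_G\id_B$: one direction gives a $G$-invariant sequence $\seq w\subset B^G$ with $w_n(\phi\circ\psi)(b)w_n^*\to b$ for every $b$, and evaluating at $b=1_B$ yields $\norm{w_n\phi(h_\psi)w_n^*-1_B}\to 0$. From Theorem~\ref{thm:equiwz} and $\norm{h_\psi}\le 1$ one has the operator inequality $\phi(h_\psi)=h_\phi^{1/2}\pi_\phi(h_\psi)h_\phi^{1/2}\le h_\phi$, so $c_n:=w_nh_\phi w_n^*\in B^G$ dominates $w_n\phi(h_\psi)w_n^*$ and is therefore bounded below --- hence invertible in $B$ --- for $n$ large. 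Then $u_n:=h_\phi^{1/2}w_n^*c_n^{-1/2}$ satisfies $u_n^*u_n=1_B$, so $u_nu_n^*$ is a projection $\le 1_B$, and stable finiteness of $B$ forces $u_nu_n^*=1_B$; thus $u_n$ is a unitary of $B$, which makes $h_\phi^{1/2}$ right-invertible and, being selfadjoint, invertible in $B$. Running the same argument with the roles of $\phi,\psi$ and $A,B$ interchanged --- using $\id_A\precsim_G\psi\circ\phi$ evaluated at $1_A$ together with $\norm{h_\phi}\le 1$ --- yields invertibility of $h_\psi$ in $A$. This promotion of $h_\phi$ (and $h_\psi$) from a positive contraction to an \emph{invertible} element is where unitality, stable finiteness and the two relations are genuinely consumed; the rest is bookkeeping with Theorem~\ref{thm:equiwz} and Proposition~\ref{prop:compositions}.

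Granting this, I would finish as follows. Since $h_\phi\ge\delta 1_B$ for some $\delta>0$, the powers $h_\phi^{1/n}$ converge in norm to $1_B$ and lie in $C^*(h_\phi)\subseteq C_\phi$, so $1_B\in C_\phi$; hence $C_\phi$ is unital with unit $1_B$ and $h_\phi^{-1}\in C_\phi\subseteq B$. Consequently $\pi_\phi(a)=h_\phi^{-1}\phi(a)\in C_\phi\subseteq B$ for every $a$, so $\pi_\phi$ is a $*$-homomorphism $A\to B$; it is unital because $\pi_\phi(1_A)=h_\phi^{-1}h_\phi=1_B$, and equivariant because the intertwining $\beta_g\dd\circ\pi_\phi=\pi_\phi\circ\alpha_g$ of Theorem~\ref{thm:equiwz} collapses to $\beta_g\circ\pi_\phi=\pi_\phi\circ\alpha_g$ once the image of $\pi_\phi$ sits inside $B$. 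The symmetric discussion produces a unital equivariant $*$-homomorphism $\pi_\psi:B\to A$. Item~i is then immediate: the identities $\phi=h_\phi^{1/2}\pi_\phi(\,\cdot\,)h_\phi^{1/2}$ and $\pi_\phi=h_\phi^{-1/2}\phi(\,\cdot\,)h_\phi^{-1/2}$, with $h_\phi^{1/2},h_\phi^{-1/2}\in B^G$, exhibit $\phi\sim_G\pi_\phi$ by constant $G$-invariant sequences, whence $[\pi_\phi]=[\phi]$, and likewise $[\pi_\psi]=[\psi]$.

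Finally, since $\pi_\phi$ maps $A$ into $B$ and $\pi_\psi$ maps $B$ into $A$, the compositions in item~ii make sense, and Proposition~\ref{prop:compositions} (monotonicity of composition under $\precsim_G$, hence under $\sim_G$) gives
\[
\pi_\psi\circ\pi_\phi\sim_G\psi\circ\pi_\phi\sim_G\psi\circ\phi\sim_G\id_A,
\]
and symmetrically $\pi_\phi\circ\pi_\psi\sim_G\id_B$, completing the argument.
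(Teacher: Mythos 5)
Your proof is correct and follows essentially the same route as the paper: extract $h_\phi,\pi_\phi$ and $h_\psi,\pi_\psi$ from Theorem~\ref{thm:equiwz}, evaluate the two Cuntz relations at the units, use (stable) finiteness to upgrade approximate or one-sided invertibility to genuine invertibility of $h_\phi$ and $h_\psi$, and finish with Proposition~\ref{prop:compositions}. The only divergences are in execution: you obtain invertibility via the domination $\phi(h_\psi)\le h_\phi$ and an isometry-to-unitary argument, where the paper instead argues that the witnessing sequence is eventually right-invertible and hence, by stable finiteness, invertible; and you derive unitality of $\pi_\phi,\pi_\psi$ directly from non-degeneracy via $\pi_\phi(1_A)=h_\phi^{-1}h_\phi=1_B$, which is a cleaner shortcut than the paper's separate positivity argument with $p=\pi_\phi(1_A)$ and $q=\pi_\psi(1_B)$.
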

\begin{proof} By Theorem \ref{thm:equiwz} we can find $G$-invariant positive elements $h_\phi,h_\psi$ and equivariant $*$-homomorphisms
$\pi_\phi,\pi_\psi$ such that $\phi = h_\phi\pi_\phi$ and $\psi =
h_\psi\pi_\psi$. Evaluating on the unit of $A$ and $B$ respectively we get
		$$h_\psi^{\frac12}\pi_\psi(h_\phi)h_\psi^{\frac12}\sim_G 1_A
		\qquad\text{and}\qquad
		h_\phi^{\frac12}\pi_\phi(h_\psi)h_\phi^{\frac12}\sim_G 1_B,$$
	where by $\sim_G$ we mean that the sequences that witnesses the Cuntz equivalence are taken from the fixed point algebras. Hence, there exists $\seq x\subset A^G$ such that
	$x_nh_\psi^{\frac12}\pi_\psi(h_\phi)h_\psi^{\frac12}x_n^*$ converges to $1_A$,
	and therefore $x_nh_\psi^{\frac12}\pi_\psi(h_\phi)h_\psi^{\frac12}x_n^*$ is
	eventually invertible. From this we conclude that there exists $c\in A$ such that
		$$x_nh_\psi^{\frac12}\pi_\psi(h_\phi)h_\psi^{\frac12}x_n^*c = 1_A$$
	for sufficiently large values of $n$, which shows that $x_n$ is right
	invertible. Since $A$ is stably finite, it follows that the sequence $\seq x$
	is eventually invertible, and therefore
		$$h_\psi^{\frac12}\pi_\psi(h_\phi)h_\psi^{\frac12}x_n^*cx_n = 1_A,$$
	which shows that $h_\psi$ is also right invertible, hence invertible.
	Similarly, one also deduces the invertibility of $h_\phi$, and so
	$\pi_\phi$ and $\pi_\psi$ satisfy (i) and (ii). Now set $p=\pi_\phi(1_A)$
	and $q = \pi_\psi(1_B)$. Since $\pi_\psi(p)\sim_G 1_A$ and
	$\pi_\phi(q)\sim_G 1_B$, stably finiteness of $A$ and $B$ implies
	$\pi_\phi(q) = 1_B$ and $\pi_\psi(p) = 1_A$. Now $1_A-\pi_\psi(q)$ is a
	positive element in $A^G$, but
		$$\pi_\phi(1_A-\pi_\psi(q)) = p - 1_B \leq 0,$$
	which is possible only if $p = 1_B$. Similarly, one finds that $q=1_A$, and
	therefore $\pi_\phi$ and $\pi_\psi$ are unital. Finally, observe that the invertibility of $h_\phi$ and $h_\psi$ implies that the ranges of $\pi_\phi$ and $\pi_\psi$ are $B$ and $A$ respectively.
\end{proof}

\begin{theorem}\label{thm:Gsinvtoisowithscale} Let $A$ and
$B$ be unital and stably finite $G$-algebras. Every strictly invertible
element $\Phi\in\Cu^G(A,B)$ induces a $\Cu(G)$-semimodule
isomorphism $\rho:\Cu^G(A)\to\Cu^G(B)$ such that $\rho([1_A])=[1_B]$ and
$\rho([1_A\otimes e_G])=[1_B\otimes e_G]$.
\end{theorem}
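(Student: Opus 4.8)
The plan is to reduce the statement to the lifting result already established as Proposition \ref{prop:cpctohom}, after which only formal properties of the functor $\Cu^G$ are needed. By definition of strict invertibility, fix equivariant c.p.c.\ order zero maps $\phi:A\to B$ and $\psi:B\to A$ with $[\phi\otimes\id_{K_G}]=\Phi$, $\psi\circ\phi\sim_G\id_A$ and $\phi\circ\psi\sim_G\id_B$. Since $A$ and $B$ are unital and stably finite, Proposition \ref{prop:cpctohom} upgrades $\phi,\psi$ to unital equivariant $*$-homomorphisms $\pi_\phi:A\to B$ and $\pi_\psi:B\to A$ with $[\pi_\phi]=[\phi]$, $[\pi_\psi]=[\psi]$, $\pi_\psi\circ\pi_\phi\sim_G\id_A$ and $\pi_\phi\circ\pi_\psi\sim_G\id_B$. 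I would then simply put $\rho:=\Cu^G(\pi_\phi):\Cu^G(A)\to\Cu^G(B)$; as $\Cu^G$ is a functor into the category $\Cu^G$ of $\Cu(G)$-semimodules, $\rho$ is automatically a $\Cu(G)$-semimodule morphism, and all that remains is to show it invertible and to pin down its effect on the distinguished elements.

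The single auxiliary fact I would isolate and prove carefully is a naturality statement: if $\eta,\eta':D\to D$ are equivariant $*$-homomorphisms with $\eta\sim_G\eta'$, then $\Cu^G(\eta)=\Cu^G(\eta')$ as endomorphisms of $\Cu^G(D)=(D\otimes K_G)^G_+/\sim_G$. Here the bivariant machinery of \S\ref{sec:equibcs} pays off: by Corollary \ref{cor:stabilitycse} the relation $\eta\sim_G\eta'$ upgrades to $\eta\otimes\id_{K_G}\sim_G\eta'\otimes\id_{K_G}$, and then, applying the localised form of $\sim_G$ to the singleton $\{a\}$ for each positive $a\in(D\otimes K_G)^G$, one obtains $(\eta\otimes\id_{K_G})(a)\sim_G(\eta'\otimes\id_{K_G})(a)$, so that the two induced maps send $[a]$ to the same class (alternatively one may invoke Proposition \ref{prop:compositions} applied to the composition product). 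Granting this, functoriality of $\Cu^G$ gives
	$$\Cu^G(\pi_\psi)\circ\Cu^G(\pi_\phi)=\Cu^G(\pi_\psi\circ\pi_\phi)=\Cu^G(\id_A)=\id_{\Cu^G(A)}$$
and, symmetrically, $\Cu^G(\pi_\phi)\circ\Cu^G(\pi_\psi)=\id_{\Cu^G(B)}$; hence $\rho$ is an isomorphism in $\Cu^G$ with inverse $\Cu^G(\pi_\psi)$. The same lemma also shows $\rho$ is independent of the chosen representatives, since by Corollary \ref{cor:stabilitycse} any two admissible choices of $\phi$ are $\sim_G$, hence so are the associated unital $*$-homomorphisms; thus $\rho$ is canonically attached to $\Phi$.

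Finally I would verify the two scale normalisations, which is the only place where unitality of $\pi_\phi$ is used. The class $[1_A]$ is realised in $\Cu^G(A)=(A\otimes K_G)^G_+/\sim_G$ by the $G$-invariant element $1_A\otimes e_G$, and since $\pi_\phi$ is unital, $(\pi_\phi\otimes\id_{K_G})(1_A\otimes e_G)=1_B\otimes e_G$, whence $\rho([1_A])=[1_B]$ and $\rho([1_A\otimes e_G])=[1_B\otimes e_G]$ both follow from the concrete description $\Cu^G(\pi_\phi)([a])=[(\pi_\phi\otimes\id_{K_G})(a)]$; conceptually, these classes are natural in the $G$-algebra and $\rho=\Cu^G(\pi_\phi)$ is induced by a \emph{unital} equivariant $*$-homomorphism, which forces the normalisation. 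Overall, the genuine content of the theorem is entirely carried by Proposition \ref{prop:cpctohom}; the only point I expect to require real care is the naturality lemma of the second paragraph, because it rests on $\Cu^G$ being functorial on ($*$-homomorphic) maps, on the $K_G$-stabilisation not disturbing $\sim_G$, and on matching the abstract functorial action with the concrete one on positive elements.
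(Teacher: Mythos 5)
Your proposal is correct and follows essentially the same route as the paper: reduce to Proposition \ref{prop:cpctohom} to replace $\phi,\psi$ by unital equivariant $*$-homomorphisms, set $\rho:=\Cu^G(\pi_\phi)$, and read off the scale conditions from unitality. The only difference is that you make explicit the naturality lemma (Cuntz-equivalent equivariant $*$-homomorphisms induce the same map on $\Cu^G$, via Corollary \ref{cor:stabilitycse}), a step the paper's proof leaves implicit when asserting that $\rho$ is an isomorphism.
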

\begin{proof} Thanks to Proposition \ref{prop:cpctohom}, if $\Phi\in\Cu^G(A,B)$ is a strictly invertible
element, there are equivariant c.p.c.~order zero maps $\phi:A\to B$ and
$\psi:B\to A$ such that $\psi\circ\phi\sim_G\id_A$ and
$\phi\circ\psi\sim_G\id_B$, which can then be replaced by their support
$*$-homomorphisms $\pi_\phi$ and $\pi_\psi$ respectively. Then
$\rho:=\Cu^G(\pi_\phi)$ is a $\Cu(G)$-semimodule isomorphism that satisfies
$\rho([1_A]) = [1_B]$ and clearly sends the constant function $1_A(g) = 1_A$ to
$1_B(g) = 1_B$, whence $\rho([1_A\otimes e_G]) = [1_B\otimes e_G]$.
\end{proof}

\begin{definition}\label{def:locrep} Let $(A,G,\alpha)$ be a $G$-algebra. The
action $\alpha$ on $A$ is said to be \emph{representable} if there exists a
group homomorphism $u:G\to U(\mathcal M(A))$ such that $\alpha_g = \Ad(u_g)$ for
any $g\in G$. The action $\alpha$ is said to be \emph{locally representable} if
there exists an increasing sequence $\seq A$ of $\alpha$-invariant
\Cs-subalgebras of $A$ such that $\bigcup_{n\in\IN}A_n$ is dense in $A$ and
$\alpha|_{A_n}$ is representable for every $n\in\IN$.
\end{definition}

In the following corollary we borrow the definition of the class of algebras $\mathbf R$ and that of locally representable actions from \cite{gs} (see the discussion that precedes \cite[Theorem 8.4]{gs}).

\begin{corollary}\label{cor:bgs} Let $G$ be a finite Abelian group and let
$(A,G,\alpha)$ and $(B,G,\beta)$ be unital $G$-algebras in the class $\mathbf R$
with locally representable actions $\alpha$ and $\beta$ along given inductive sequences for $A$ and $B$ respectively, that lie in the class $\mathbf R$. Then $(A,G,\alpha)$ and $(B,G,\beta)$ are equivariantly
isomorphic if and only if there is a strictly invertible element in
$\Cu^G(A,B)$.
\end{corollary}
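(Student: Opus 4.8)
The plan is to treat the two implications separately. The ``only if'' direction is immediate: if $\pi:A\to B$ is an equivariant $*$-isomorphism, then $\pi$ and $\pi^{-1}:B\to A$ are equivariant c.p.c.~order zero maps with $\pi^{-1}\circ\pi=\id_A$ and $\pi\circ\pi^{-1}=\id_B$, so in particular $\pi^{-1}\circ\pi\sim_G\id_A$ and $\pi\circ\pi^{-1}\sim_G\id_B$; hence $[\pi\otimes\id_{K_G}]\in\Cu^G(A,B)$ is a strictly invertible element.

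For the converse, first observe that the algebras in the class $\mathbf R$, being inductive limits of one-dimensional non-commutative CW complexes, are subhomogeneous and therefore stably finite, so $A$ and $B$ are unital and stably finite $G$-algebras and Theorem \ref{thm:Gsinvtoisowithscale} applies. Given a strictly invertible $\Phi\in\Cu^G(A,B)$, that theorem produces a $\Cu(G)$-semimodule isomorphism $\rho:\Cu^G(A)\to\Cu^G(B)$ with $\rho([1_A])=[1_B]$ and $\rho([1_A\otimes e_G])=[1_B\otimes e_G]$. By the discussion at the beginning of Section \ref{sec:equics}, the functor $\Cu^G$ of this paper agrees, as a $\Cu(G)$-semimodule, with the equivariant Cuntz semigroup of \cite{gs}, so $\rho$ is an isomorphism of exactly the invariant employed there: the condition $\rho([1_A])=[1_B]$ records preservation of the class of the unit in $\Cu^G(\ \cdot\ )\cong\Cu^G(\CC,\ \cdot\ )$, while $\rho([1_A\otimes e_G])=[1_B\otimes e_G]$ supplies the second normalisation coming from the minimal $G$-invariant projection $e_G$, and together these match the scale data appearing in part (2) of \cite[Theorem 8.4]{gs}. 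Since $G$ is a finite Abelian group, $A$ and $B$ are unital $G$-algebras in $\mathbf R$, and the actions $\alpha,\beta$ are locally representable along inductive sequences lying in $\mathbf R$, the hypotheses of \cite[Theorem 8.4]{gs} are satisfied, and that result yields an equivariant $*$-isomorphism $A\to B$.

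The step I expect to be the main obstacle is verifying that the $\Cu(G)$-semimodule isomorphism $\rho$ delivered by Theorem \ref{thm:Gsinvtoisowithscale} — namely the morphism $\Cu^G(\pi_\phi)$ induced by the support $*$-homomorphism of the relevant c.p.c.~order zero map — carries precisely the structure (order, suprema, the $\Cu(G)$-action) and the distinguished elements that \cite[Theorem 8.4]{gs} takes as input. This is a matter of unwinding the identification between the two presentations of $\Cu^G$ and of matching scale conventions, rather than a genuinely new argument, but it is where the care is required.
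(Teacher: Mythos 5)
Your proposal is correct and follows essentially the same route as the paper: the forward direction is immediate from the definition of strict invertibility, and the converse applies Theorem \ref{thm:Gsinvtoisowithscale} to obtain a scale-preserving $\Cu(G)$-semimodule isomorphism and then invokes the classification theorem of \cite{gs}. Your added remarks — that algebras in $\mathbf R$ are stably finite so the theorem applies, and that one must check the isomorphism carries exactly the invariant used in \cite[Theorem 8.4]{gs} — are exactly the points the paper leaves implicit.
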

\begin{proof} It follows directly from the above theorem, together with the classification results of \cite{gs}.
\end{proof}

Locally representable actions for the larger class of compact groups have been
considered by Handelman and Rossmann. Their definition of local representability
is restricted to AF algebras, and it is assumed that an action $\alpha$ over an
AF algebra $A$ is locally representable if it is representable along a given
inductive sequence of \emph{finite dimensional} \Cs-algebras whose limit is $A$. We shall say that a
$G$-algebra $(A,G,\alpha)$ is AF if the underlying \Cs-algebra $A$ is. Their
main classification result \cite[Theorem III.1]{hr}, for the purposes of this
paper, can be stated in the following way.

\begin{theorem}[Handelman-Rossmann]\label{thm:hr} Let $G$ be a compact group and
let $(A,G,\alpha)$ and $(B,G,\beta)$ be unital AF $G$-algebras, with $\alpha$
and $\beta$ locally representable actions along given inductive sequences for
$A$ and $B$ respectively. Then $A$ and $B$ are equivariantly isomorphic if and
only if there exists a $V^G(\CC)$-semimodule isomorphism $\rho:V^G(A)\to V^G(B)$
such that $\rho([1_A]) = [1_B]$.
\end{theorem}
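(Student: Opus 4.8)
The plan is to deduce the nontrivial (``only if'') direction from an Elliott-type intertwining along the given inductive sequences, the single substantive input being the rigidity of finite-dimensional $G$-algebras carrying \emph{representable} actions. The easy direction is immediate: an equivariant unital $*$-isomorphism $A\to B$ induces, by functoriality of $V^G$, a $V^G(\CC)$-semimodule isomorphism $V^G(A)\to V^G(B)$ carrying $[1_A]$ to $[1_B]$.

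First I would record the finite-dimensional classification. Write $A=\varinjlim(A_k,\phi_k)$ and $B=\varinjlim(B_k,\psi_k)$ with the $A_k,B_k$ finite dimensional, the connecting maps equivariant and unital, and $\alpha|_{A_k}$, $\beta|_{B_k}$ representable; then each $A_k$ (resp.\ $B_k$) splits equivariantly into matrix blocks $(M_n,\Ad\pi)$ with $\pi$ an \emph{honest} unitary representation of $G$, so its equivariant isomorphism type together with the position of the unit is entirely encoded by $V^G$ as a pointed $R(G)^+$-semimodule (here $R(G)^+\cong V^G(\CC)$, cf.\ Example~\ref{ex:rring}). The lemma to prove or cite is that every unital morphism of such pointed semimodules $V^G(D)\to V^G(E)$ is induced by an equivariant unital $*$-homomorphism $D\to E$, and that any two such homomorphisms inducing the same morphism are conjugate by a $G$-invariant unitary of $E$. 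This is pure finite-dimensional representation theory---compatibility of irreducible multiplicities with the positions of the units---and it is exactly here that representability is used: without it the block actions need only be $\Ad$ of \emph{projective} representations and a prescribed multiplicity pattern need not be realisable by an actual equivariant embedding.

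Next I would run the intertwining. Since $V^G$ is continuous, $V^G(A)=\varinjlim V^G(A_k)$ and $V^G(B)=\varinjlim V^G(B_k)$; because these are inductive systems of countable, discrete pointed $R(G)^+$-semimodules, the isomorphism $\rho$ and $\rho^{-1}$ can, after passing to subsequences, be replaced by an honest two-sided intertwining: unital pointed module maps $\theta_k\colon V^G(A_k)\to V^G(B_k)$ and $\eta_k\colon V^G(B_k)\to V^G(A_{k+1})$ with $\eta_k\circ\theta_k$ the $A$-connecting map, $\theta_{k+1}\circ\eta_k$ the $B$-connecting map, and $\varinjlim\theta_k=\rho$. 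Lifting each $\theta_k$ and $\eta_k$ to equivariant unital $*$-homomorphisms $\widetilde\theta_k\colon A_k\to B_k$, $\widetilde\eta_k\colon B_k\to A_{k+1}$ by the previous step, one has $\widetilde\eta_k\circ\widetilde\theta_k$ and $\phi_k$ inducing the same map on $V^G$, hence conjugate by a $G$-invariant unitary; the standard Elliott one-sided correction, carried out with these $G$-invariant unitaries, turns the diagram into one that commutes on the nose, and its limit is an equivariant unital $*$-isomorphism $\Theta\colon A\to B$ with $V^G(\Theta)=\rho$, in particular $\Theta(1_A)=1_B$.

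I expect the main obstacle to be the finite-dimensional rigidity lemma together with the bookkeeping needed to keep the correcting unitaries $G$-invariant and every map unital at each stage; modulo that, the argument is the usual Elliott shuffle. One may note in passing that the lifted maps $\widetilde\theta_k$ assemble in the limit into an equivariant $*$-homomorphism $A\to B$ which, with its inverse, witnesses a strictly invertible element of $\Cu^G(A,B)$ in the sense of Theorem~\ref{thm:Gsinvtoisowithscale}, so that Corollary~\ref{cor:bhr} is recovered along the way.
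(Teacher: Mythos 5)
The paper does not actually prove this statement: it is quoted verbatim (in the form convenient for the paper) from Handelman and Rossmann, \cite[Theorem III.1]{hr}, and is used as a black box to deduce Corollary~\ref{cor:bhr}. So there is no internal proof to compare against; what you have written is a reconstruction of the argument in the cited source, and as such it is essentially correct: the ``if'' direction is functoriality of $V^G$, and the ``only if'' direction is the equivariant Elliott intertwining, with the substantive content concentrated in the finite-dimensional existence and uniqueness lemma. You correctly identify where local representability enters --- each block $(M_n,\Ad\pi)$ with $\pi$ an honest representation is equivariantly Morita equivalent to $\CC$, so $V^G$ of a finite-dimensional representable $G$-algebra is a finite direct sum of copies of $R(G)^+\cong V^G(\CC)$ with the unit class recorded, unital pointed semimodule morphisms lift to equivariant unital $*$-homomorphisms, and two lifts are conjugate by a $G$-invariant unitary; without representability the blocks carry only projective representations and the lifting fails.

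Two points in your sketch deserve to be made explicit rather than asserted. First, the continuity $V^G(A)=\varinjlim V^G(A_k)$ is a genuine ingredient (it requires perturbing $G$-invariant projections and $G$-invariant partial isometries in the limit algebra back into finite stages, using compactness of $G$ to average); it is proved in \cite{hr} and should be cited or proved, not just invoked. Second, in the approximate-intertwining step you need the maps $\theta_k,\eta_k$ to exist at \emph{finite} stages, which uses that each $V^G(A_k)$ is a finitely generated free $R(G)^+$-semimodule (one generator per block), together with cancellation in $V^G$ of an AF algebra to keep the unit classes matching exactly rather than merely subequivalently. With those two items filled in, your argument is the Handelman--Rossmann proof, and your closing remark --- that the limit of the lifted maps, together with its inverse, exhibits a strictly invertible element of $\Cu^G(A,B)$ --- is consistent with how the paper then derives Corollary~\ref{cor:bhr} in the opposite direction, via Lemma~\ref{lem:vinjects} and Theorem~\ref{thm:Gsinvtoisowithscale}.
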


This classification result can be recovered from the equivariant
bivariant Cuntz semigroup as a corollary to Theorem
\ref{thm:Gsinvtoisowithscale}, as it is now shown.

\begin{corollary}\label{cor:bhr} Let $G$ be a compact group and let
$(A,G,\alpha)$ and $(B,G,\beta)$ be unital AF $G$-algebras, with $\alpha$ and
$\beta$ locally representable actions along given inductive sequences for $A$
and $B$ respectively. Then $A$ and $B$ are equivariantly isomorphic if and only
if there exists a strictly invertible element in $\Cu^G(A,B)$.
\end{corollary}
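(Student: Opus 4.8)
The plan is to deduce Corollary \ref{cor:bhr} from Theorem \ref{thm:Gsinvtoisowithscale} together with the Handelman--Rossmann classification (Theorem \ref{thm:hr}). One direction is immediate: if $(A,G,\alpha)$ and $(B,G,\beta)$ are equivariantly isomorphic, then the $*$-isomorphism $\pi:A\to B$ intertwining the actions induces the strictly invertible class $[\pi\otimes\id_{K_G}]\in\Cu^G(A,B)$ (with inverse coming from $\pi^{-1}$), exactly as in the non-equivariant case. So the content is the converse.

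For the converse, suppose $\Phi\in\Cu^G(A,B)$ is strictly invertible. Since $A$ and $B$ are unital and AF, they are in particular stably finite, so Theorem \ref{thm:Gsinvtoisowithscale} applies and yields a $\Cu(G)$-semimodule isomorphism $\rho:\Cu^G(A)\to\Cu^G(B)$ with $\rho([1_A])=[1_B]$. The next step is to pass from this isomorphism of (completed) equivariant Cuntz semigroups down to an isomorphism of the equivariant Murray--von Neumann semigroups $V^G(A)$ and $V^G(B)$, so that Theorem \ref{thm:hr} can be invoked. The mechanism is Lemma \ref{lem:vinjects}: since $A$ and $B$ are stably finite, the natural maps $V^G(A)\to\Cu^G(A)$ and $V^G(B)\to\Cu^G(B)$ are injective, so $V^G(A)$ and $V^G(B)$ sit inside $\Cu^G(A)$ and $\Cu^G(B)$ as sub-semigroups. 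One must check that $\rho$ carries the image of $V^G(A)$ onto the image of $V^G(B)$ and restricts to a $V^G(\CC)$-semimodule isomorphism sending $[1_A]$ to $[1_B]$; here one uses that $\rho$ is induced by an equivariant unital $*$-homomorphism $\pi_\phi$ (coming from Proposition \ref{prop:cpctohom}), so that $\rho$ restricted to projections is just $V^G(\pi_\phi)$, which visibly maps $V^G(A)$ into $V^G(B)$ and is compatible with the $V^G(\CC)\subset\Cu(G)$ action. Its inverse is $V^G(\pi_\psi)$ by the same token, giving the desired semimodule isomorphism $V^G(A)\cong V^G(B)$ preserving the class of the unit. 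Then Theorem \ref{thm:hr} produces an equivariant $*$-isomorphism $A\cong B$.

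The main obstacle I expect is the bookkeeping in the middle step: verifying that the abstract $\Cu(G)$-semimodule isomorphism $\rho$ genuinely restricts to the finitely generated projective level, i.e.\ that $\rho(V^G(A))=V^G(B)$ inside the completed objects, and that it is $V^G(\CC)$-linear (not merely $\Cu(G)$-linear) and unit-preserving in the precise sense required by the hypotheses of Theorem \ref{thm:hr}. This is handled cleanly by working with the support $*$-homomorphisms $\pi_\phi,\pi_\psi$ rather than with the abstract semimodule map, since $\Cu^G$ and $V^G$ are both functorial and the diagram
\[
\begin{tikzcd}
V^G(A)\arrow[r]\arrow[d,"V^G(\pi_\phi)"'] & \Cu^G(A)\arrow[d,"\Cu^G(\pi_\phi)=\rho"]\\
V^G(B)\arrow[r] & \Cu^G(B)
\end{tikzcd}
\]
commutes; combined with the injectivity from Lemma \ref{lem:vinjects} this pins down the restriction of $\rho$ uniquely as $V^G(\pi_\phi)$, and the rest is a routine check. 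Everything else --- stable finiteness of $A$ and $B$, and the forward direction --- is standard.
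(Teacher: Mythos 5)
Your proposal is correct and follows essentially the same route as the paper: apply Proposition \ref{prop:cpctohom} / Theorem \ref{thm:Gsinvtoisowithscale} to replace the strictly invertible class by an equivariant unital $*$-homomorphism, use Lemma \ref{lem:vinjects} to identify $V^G$ inside $\Cu^G$, and observe that the induced map restricts to a unit-preserving $V^G(\CC)$-semimodule isomorphism $V^G(A)\to V^G(B)$, so that Theorem \ref{thm:hr} applies. Your extra care about the inverse being $V^G(\pi_\psi)$ and the commuting diagram is a slightly more explicit version of what the paper leaves implicit.
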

\begin{proof} Recall that, by Lemma \ref{lem:vinjects}, $V^G(A)$ injects in $\Cu^G(A)$ for any stably finite \Cs-algebra $A$. By Theorem \ref{thm:Gsinvtoisowithscale}, every strictly
invertible element is represented by an equivariant $*$-homomorphism, which maps $G$-invariant projections to $G$-invariant projections, and therefore induces a $V^G(\CC)$-semimodule
homomorphism between $V^G(A)$ and $V^G(B)$ that satisfies all the hypotheses
of Theorem \ref{thm:hr}.
\end{proof}

	\bibliographystyle{hplain}
	\bibliography{refs}
	
\end{document}